\newif\ifPreprint \Preprintfalse
\newif\ifSubmission \Submissiontrue
\pgfplotsset{compat=1.18}
\patchcmd{\@settitle}{\uppercasenonmath\@title}{\scshape\large}{}{}
\patchcmd{\@setauthors}{\MakeUppercase}{\scshape\normalsize}{}{}
\theoremstyle{plain}
\newtheorem{theorem}{Theorem}
\newtheorem{lemma}[theorem]{Lemma}
\theoremstyle{definition}
\newtheorem{example}[theorem]{Example}
\newtheorem{remark}[theorem]{Remark}
\theoremstyle{remark}
\numberwithin{theorem}{section}
\newcommand{\st}{\text{s.t.}}
\newcommand{\R}{\mathbb{R}}
\newcommand{\Ccal}{\mathcal{C}}
\newcommand{\Mcal}{\mathcal{M}}
\newcommand{\Pcal}{\mathcal{P}}
\begin{document}

\title[A Safe Approximation of Univariate DRO]{
  Safe Mixed-Integer Approximation for Non-Convex Distributional Robustness with Univariate Indicator Functions} 
\author[J. Dienstbier, F. Liers, F. R{\"o}sel, J. Rolfes]%
{J. Dienstbier, F. Liers, F. R{\"o}sel, J. Rolfes}

\address[J. Dienstbier, F. Liers, F. R{\"o}sel]{%
  Department of Data Science,
  Friedrich-Alexander-Universität, Erlangen-Nürnberg,
  Cauerstr. 11,
  91058 Erlangen,
  Germany}
\email{\{jana.jd.dienstbier, frauke.liers, florian.roesel\}@fau.de}

\address[J. Rolfes]{%
	Department of Mathematics,
	Linköping University,
	SE-581 83 Linköping, 
	Sweden}
\email{\{jan.rolfes\}@liu.se}

\newcommand{\JD}[1]{\todo[author=JD,color=green!50,size=\small]{#1}}
\newcommand{\JDil}[1]{\todo[inline,author=JD,color=green!50,size=\small]{#1}}
\newcommand{\MK}[1]{\todo[author=MK,color=red!50,size=\small]{#1}}
\newcommand{\MKil}[1]{\todo[inline,author=MK,color=red!50,size=\small]{#1}}
\newcommand{\FL}[1]{\todo[author=FL,color=red!50,size=\small]{#1}}
\newcommand{\FLil}[1]{\todo[inline,author=FL,color=red!50,size=\small]{#1}}
\newcommand{\FR}[1]{\todo[author=FR,color=black,textcolor=white,size=\small]{#1}}
\newcommand{\FRil}[1]{\todo[inline,author=FR,color=black,textcolor=white,size=\small]{#1}}
\newcommand{\Regie}[1]{\todo[inline,author=Regie,color=white,textcolor=black,size=\small]{#1}}

\newcommand{\JR}[1]{\todo[author=JR,color=orange!50,size=\small]{#1}}
\newcommand{\JRil}[1]{\todo[inline,author=JR,color=orange!50,size=\small]{#1}}
\newcommand{\MS}[1]{\todo[author=MS,color=blue!50,size=\small]{#1}}
\newcommand{\MSil}[1]{\todo[inline,author=MS,color=blue!50,size=\small]{#1}}
\newcommand{\RTD}[1]{\rho({#1})}
\newcommand{\uRTD}[2]{\rho_{#1}({#2})}
\newcommand{\mass}[1]{\mathbbm{P}}
\newcommand{\EW}[1]{\mathbb{E}_{#1}}
\newcommand{\varianz}[1]{\sigma_{#1}}
\newcommand{\bound}[2]{\bar{\rho}_{#1}({#2})}
\newcommand{\initial}[1]{q_0({#1})}
\newcommand{\boundvariable}[1]{z_{#1}}
\newcommand{\lift}[2]{\Delta_{#1}^{#2}}
\newcommand{\minparabel}{p_{\text{min}}}
\newcommand{\sign}{\text{sign}}
\newcommand{\E}{\mathbbm{E}}
\renewcommand{\P}{\mathbbm{P}}
\renewcommand{\u}{\mathcal{P}}

\newcommand{\change}{\textcolor{red}}

\date{\today}

\begin{abstract}
  In this work, we present an algorithmically tractable safe approximation of distributionally robust optimization (DRO)
problems that contain univariate indicator functions. The latter appear in different applications, but render the model
nonlinear and nonconvex. The considered ambiguity sets can exploit moment information. Typically, reformulation approaches using duality theory need to
make strong assumptions on the structure of the underlying
constraints, such as convexity in the decisions or concavity in the
uncertainty which cannot be assumed in our setting. We nevertheless present an equivalent semi-infinite reformulation that is subsequently approximated by a discretized counterpart. Under mild assumptions, the latter provides a safe approximation that is formulated as a tractable mixed-integer linear problem, which can be solved by available standard software.
Obtained solutions are guaranteed to be feasible for the original distributionally robust problem.
Although we show that in general convergence to the true DRO problem cannot be expected, we furthermore prove that the approximation of the adversarial problem indeed converges to its true value for increasingly fine discretization.  
On the practical side, the approach is made concrete for a challenging, fundamental task in material design, namely in particle separation.
Computational results for a realistic setting show that the safe approximation yields robust solutions of high-quality and can be computed within short time.

\end{abstract}

\keywords{Distributionally Robust Optimization,
Reformulation,
Safe Approximation,
Mixed-Integer Optimization,
Robust Optimization,
Stochastic Optimization,
Discrete Optimization%
%
}

\makeatletter
\@namedef{subjclassname@2020}{%
	\textup{2020} Mathematics Subject Classification}
\makeatother

\subjclass[2020]{
90Cxx, 
90C11, 
90C22 
90C34
%
%
}

\maketitle


\section{Introduction}
\label{Sec:introduction}

\Regie{Die eigentliche Intro...}
Structural and algorithmical insights in mathematical optimization have led to the development of many practically efficient methodologies. 
Additionally, optimization under uncertainty is a research area that currently
receives increased attention. Real-world applications are often
strongly affected by uncertainties that can
neither be fully controlled nor be measured exactly. In this work,
we study a class of distributionally robust optimization models that contain univariate indicator functions. Next to classical quantities such as the value at risk, univariate indicator functions appear in relevant and challenging applications. In particular, they can be used to model the decision for an optimized time interval during which energy generation is performed (or switched off again) in the unit commitment context in the power industry, where energy generation is uncertain for renewables,~\cite{unitcommitment}.
Another application consists in deciding for a best possible time interval during which material shall be collected in particle separation under unavoidable uncertainties. However, as indicator functions lead to non-convex models, general and practically efficient robust algorithms are not available. In this work, we present a safe approximation for this class of problems. Its usage is advantageous because it leads to high-quality solutions that can be obtained by solving a mixed-integer linear optimization problem (MIP) with available modern software tools. 

\Regie{DRO Constraints erklären...}
Let $b\in \R$ be a scalar, $\u$ a set of probability measures , $x\in
\R^k$ decision variables 
We model the uncertainty in our distributionally robust optimization (DRO) model by a random variable $t$ with values in the compact set $T \subseteq \R$ distributed according to an uncertain probability measure $\P\in \Pcal$.
Here, $v:\R^k\times T\rightarrow \R$ denotes a function that connects the decision variables with the random variable $t$. Then, a \emph{distributionally robust constraint} or DRO constraint is defined by

\begin{equation}\label{Eq: DRO_constraint}
		b\leq \min_{\P\in \u} \E_{\P}\left(v(x,t)\right),
\end{equation}

where $\E_{\P}$ models the expected value with respect to $\P$. 
Constraints of this form have been intensively studied for decades, see e.g., \cite{Fengmin2021a} or \cite{Rahimian2019a}. In
particular, setting $\u=\{\P\}$ \emph{stochastic
  constraints} 
\begin{equation*}
	b\leq \E_{\P}\left(v(x,t)\right)
\end{equation*}
are a central object of investigation in stochastic
optimization. 

On the other hand, setting $\u=\{\delta_t: t\in T\}$, where $\delta_t$ denotes the Dirac measures at $t\in T$, \eqref{Eq: DRO_constraint} provides a \emph{robust constraint}
\begin{equation*}
	b\leq \min_{t\in T} v(x,t).
\end{equation*}
A constraint of this form is a
central object of interest in robust optimization, where challenges still consist in deriving algorithmically tractable
solution approaches for solving the robust counterparts. 
Typically, reformulations, decomposition and approximation algorithms are in the focus.
In this work, we develop a safe approximation for a DRO constraint, i.e., obtained solutions are guaranteed to be robust, that is allowed to contain indicator functions. Indicator functions are very relevant as they can be used to approximate arbitrary functions $v$ and consequently arbitrary Lebesgues integrals such as $\EW{\P}\left(v(x,t)\right).$ Although indicator functions are non-convex, the safe approximation is obtained via duality-based reformulations of some closely related (convex) problems. 
Next,
we briefly review some relevant literature in optimization under uncertainty, and distributional robustness in particular.
\bigskip

\Regie{Literatur...}
If the uncertainty is distributed by a given distribution or if the
constraints need to be satisfied in a probabilistic sense, stochastic
optimization is useful (see e.g., \cite{prekopa1998SO,birge2006introduction}).  
In case underlying distributions are unknown, robust optimization assumes
that uncertainty is modeled via predefined uncertainty sets. Solutions are sought that are feasible regardless of how the uncertainties manifest themselves within these sets. Robust optima are those with best guaranteed solution value. Duality-based
reformulation approaches
have been developed for a wide class of optimization problems, achieving computational
tractability of semi-infinite or at least exponentially large
robust counterparts, see, e.g., \cite{soyster1973convex,ben1998robust,ben1999robust,ben2000robust}.
If reformulations are not available, often decomposition as well as approximation approaches are developed.
For textbooks on robust optimization, we refer to~\cite{Ben-Tal:RobustOptimization} and~\cite{bertsimas2022robust} for continous robust optimization and to~\cite{kouvelis97} and~\cite{GoerigkHartisch24} for discrete and combinatorial robust optimization. Still, robust optimization including non-convex functions pose methodological and algorithmical challenges. A decomposition algorithm based on an adaptive bundle method was
presented in \cite{Kuchlbauer2022adaptive} for nonlinear robust optimization and extended to
integral decisions in \cite{kuchlbauerOuter}. An overview of robust nonlinear optimization ist given in~\cite{Leyffer02042020}. 


If the distribution itself is uncertain, determining distributionally robust
optima as shown in~\eqref{Eq: DRO_constraint} is natural.
For recent surveys, we refer to the detailed reviews \cite{Fengmin2021a,Rahimian2019a}, as well as to~\cite{Kuhn_Shafiee_Wiesemann_2025} and the references therein. 
To summarize, there are two main strands of research to describe uncertain
distributions against which robust protection is sought. The latter are assumed to reside in a so-called \emph{ambiguity set}. In \emph{discrepancy-based ambiguity sets}, it is assumed that a 'typical', nominal distribution is given. The corresponding ambiguity set is then assumed to contain all distributions within an appropriately defined distance from the nominal distribution, e.g., Wasserstein-balls, see \cite{mohajerin2018data}. In the second strand, ambiguity sets are defined as
\emph{moment-based ambiguity sets}, where the moments of the considered distributions are assumed to satisfy given
bounds. We will use closely related ambiguity sets here and hence review further literature for moment-based ambiguities next. 

In \cite{ghaoui2003worst} the authors study mean-variance or
Value-at-Risk risk measures, where the ambiguity set has a bound mean and covariance matrix. The authors
develop algorithmically tractable algorithms for the robust counterpart. \cite{Popescu2007a} also considers moment information
where the problem is reduced to optimizing an univariate mean-variance robust objective. 
In \cite{Xu2017a} Slater conditions are used to show correctness of a
duality-based reformulation of the robust counterpart. Additionally, two
discretization schemes are presented that provide convergent approximate solutions to the problem. Exact reformulations of DRO problems, that contain \eqref{Eq: DRO_constraint} as a constraint, are only known for specific ambiguity sets. \cite{Delage2010a} present such reformulations for
convex problems and ambiguity sets with moment bounds.
Moreover, the article \cite{Wiesemann2014a}, under some convexity assumptions, presents a tractable duality-based reformulation of \eqref{Eq: DRO_constraint}, that incorporates information on the confidence sets. Under the given assumptions, their approach can be applied to a DRO with \eqref{Eq: DRO_constraint} as a constraint.

Incorporating additional information to moment-based ambiguity sets forms a challenging task. One such additional property that is potentially beneficial to capture in an ambiguity set is if the probability distribution is known to be unimodal. Here, the authors of \cite{Parys2015a} determine a semidefinite programming (SDP) reformulation for the DRO constraint \eqref{Eq: DRO_constraint}, but do not consider its interplay with the upper-level decision variables $x$ encoded by $v(x,t)$. 

Most of the current literature focuses on convex problems, and nonconvex classes of DRO problems have only rarely been studied. Among them,~\cite{JinEtAl2021} as well as~\cite{NeufeldEtAl2025} have developed first-order solution algorithms. 

We use moment-based ambiguity sets and assume that the mean and covariance matrix is within appropriately chosen bounds. In
addition, we allow to include confidence set information, as has been done in 
\cite{Wiesemann2014a}. However, it is crucial in \cite{Wiesemann2014a} that the models are convex, whereas here, we address nonconvexities as they appear in indicator
functions. 
For the resulting semi-infinite DRO problem, we introduce a novel safe approximation
which reformulates the robust counterpart to a finite-dimensional
mixed-integer linear optimization model. A main result consists in the
proof that the inner, 'adversarial', optimization problem can be approximated arbitrarily well, where the solution quality depends on the size of some discretization of the feasible domain. Computational experiments for a challenging application in particle design show that the resulting approach successfully determines high-quality safe approximations quickly.
For the more general case of multivariate distributionally robust constraints that are allowed to contain indicator functions, a positive-semidefinite safe approximation was derived in~\cite{rolfes25}.

Current research aims at defining ambiguity sets that are as large as necessary, while restricting the level of conservatism that is induced by robust protection. The textbook~\cite{bertsimas2022robust} on robust optimization also covers recent research on appropriate choices of uncertainty and ambiguity sets. An approach in this direction consists in studying 'hybrid' ambiguities derived by intersecting moment-based and discrepancy-based ambiguity sets.

Several works derive approximations for DRO problems with hybrid ambiguities. For example, \cite{Luo2023} study distributionally robust chance constraints with  Wasserstein balls intersected with moment information. The safe approximation consists in solving a MIP. In~\cite{Cheramin2022}, both inner and outer approximations where derived for DRO problems with similar ambiguity sets.


\Regie{Structure...}
This work is structured as follows.
In Section \ref{Sec: Problem Setting}, the distributionally robust
problem formulation including indicator functions is introduced, together with motivating examples and how the considered ambiguity sets are modeled. Section \ref{sec:safe_approx}
introduces a novel
semi-infinite reformulation of the corresponding robust
counterpart together with a suitable discretization approach that
leads to a finite-dimensional safe approximation of the original DRO problem. In addition, it is proven that the inner problem can be approximated arbitrarily well, if the discretization size tends to zero. Section~\ref{sec:mip} presents a mixed-integer optimization
model for the safe approximation that can be solved by standard modern software under mild assumptions.
Computational results are
given in Section \ref{Sec:comp-results}. As motivating application, a
fundamental and difficult optimization task in material design is
studied. It is shown that particle separation under uncertainty
falls
into the class of problems studied here, for which known approaches fall
short either in modelling capabilities or in quality of the obtained
solutions. 
Using realistic settings with real data, it turns out that the robust counterparts can be solved practically
efficiently via the mixed-integer linear model. 

\bigskip
\Regie{Contribution...}
{\bf Our contributions} are the following:

\begin{itemize}
\item We develop a reformulation-based safe approximation of DRO constraints that contain non-convex indicator functions. The approximation uses discretization. 
\item Although pathological examples exist for which the safe approximation does not lead to solutions that are 'close' to the true DRO solutions in some metric, we prove that solutions to the inner problem of our approximation indeed converges to the true solutions to the inner problem for increasingly fine discretization.
  \item We present a safe approximation of the DRO that boils down to a MIP whenever the original upper level problem is MIP representable.
\item We show that the considered problem class appears naturally in material design in robust particle separation.
\item  Computational results on realistic particle design instances lead to two important observations: The unprotected, nominal, solution easily leads to particles that do not meet quality requirements, already for small ambiguity sets. In contrast, the robust safe approximation still satisfies these requirements, while only being mildly conservative for realistically-sized uncertainties. 
  \end{itemize}

\section{Problem Setting and Notation}\label{Sec: Problem Setting}
%
In this work, we consider optimization problems of the form
\begin{subequations}\label{problem:basic2}
	\begin{align}
		\max_{(x^-,x^+, \tilde{x}) \in C} \quad & c(x^-,x^+, \tilde{x}) \\
		\label{problem:basic_prob_constr2}\text{s.t.} \quad & \sum_{s \in S} \min_{\mathbbm{P}^s \in \u^s} a^s \mathbbm{P}^s\left([x^-, x^+]\right) \geq b,
	\end{align}
\end{subequations}
where the independent random variables $t^s\in T^s$ are distributed according to $\P^s$.

Here, $c$ denotes an arbitrary objective function, $b\in \R$ the right-hand-side, and $C \subseteq \R^k$ denotes the feasible set for the variables $x^-, x^+, \tilde{x}$, possibly constrained by additional constraints. 
$S$ denotes a finite index set. The ambiguity sets $\u^s$ are sets of probability measures on a compact domain $T^s \subset \R$ for all $s \in S$. We assume that $a^s \in \R \setminus \{0\}$ for all $s\in S$.
Constraint~\eqref{problem:basic_prob_constr2} can be considered as a distributionally robust chance constraint. Such constraints can model risk measures, see Example~\ref{example:value-at-risk}, and appear in relevant applications, for example in material science, see Section~\ref{Sec:comp-results}.

\textcolor{black}{Moreover, they pose an extension to the literature with regards to the interplay between the decisions $x$ and the random variable $t$ as highlighted in the following remark.
\begin{remark}
	We note that $\E_{\P}\left(v(x^-,x^+,t)\right)=\sum_{s\in S} a^s \E_{\P^s}\left(v^s(x^-,x^+,t^s)\right) = \sum_{s\in S} a^s \mathbbm{P}^s\left([x^-,x^+]\right) $ encodes the expectation of a non-convex, in our case a piecewise-constant step-like function $v$ in $t\sim \mathbbm{P}$. This is a crucial distinction from results presented in \cite{Wiesemann2014a} and \cite{Delage2010a}. Here, the underlying function $v(x,t)$ has to be both, convex and piecewise-affine in $x$ and $t$, see Condition (C3) in \cite{Wiesemann2014a} and Assumption 2 in
	\cite{Delage2010a}. In \cite{Wiesemann2014a} and \cite{Xu2017a}, there are exceptions to these assumptions, however in these exceptions the number of confidence sets has to be very low (see Observation 1ff in the electronic compendium of \cite{Wiesemann2014a}), or has to be equal to $0$ (\cite{Xu2017a}). 
	In the present article, we extend this
	setting by considering indicator functions $\mathbbm{1}_{[x^-,x^+]}(t)$, that generally do not satisfy any of those assumptions.
\end{remark}
}

\subsection{Fundamentals of DRO Constraints with Indicator Functions}
We note that Problem~\eqref{problem:basic2} can be considered as a specification of the general DRO problem \eqref{Eq: DRO_constraint}:
If we define the function $v$ as a sum of \emph{indicator functions}, i.e.,
\begin{equation}\label{Eq: v_one-dim}
	v(x^-,x^+,t) := \sum_{s \in S} a^s \mathbbm{1}_{[x^-,x^+]}(t^s), \text{ where } \mathbbm{1}_{[x^-,x^+]}(t^s)\coloneqq \begin{cases}
		1 & \text{ if } t^s\in [x^-,x^+]\\
		0 & \text{ otherwise,}
	\end{cases}
\end{equation}
then we observe that the left-hand-side of Constraint~\eqref{problem:basic_prob_constr2} can be rewritten as
\begin{equation}
\sum_{s \in S} \min_{\mathbbm{P}^s \in \u^s} a^s \mathbbm{P}^s\left([x^-, x^+]\right) = \sum_{s \in S} \min_{\mathbbm{P}^s \in \u^s} a_s \E_{t^s \sim \P^s } \left(\mathbbm{1}_{[x^-,x^+]}(t^s)\right) = \min_{\P \in \u} \E_{t \sim \P} \left(v(x^-, x^+, t)\right)
\end{equation}
where $\u$ is the set of all probability measures on $\R^{|S|}$ whose $s$-th marginal is in $\u^s$ and independent of other marginals for all $s\in S$.

\begin{example}\label{example:value-at-risk}
The modeling power of \eqref{problem:basic2} is illustrated by one of the most prominent risk measures in risk theory - the so-called
\emph{value-at-risk}. It is often applied as a tool to aid both,
financial controlling and reporting \cite{Dowd2002a}. We refer to \cite{ghaoui2003worst} for further details on this topic. At a confidence level of $\alpha$, the value-at-risk
$\text{VaR}_{\alpha}(t)$ is defined as follows. Given a real-valued random variable $t$ that measures the loss (e.g., of a market participant) and suppose
that this loss function is distributed by $F_t$. Then the value at risk at a confidence
level $\alpha$ is defined as
$$\text{VaR}_{\alpha}(t) \coloneqq \inf\left\{x\in \R: F_t(x) \geq \alpha\right\}.$$ 
Hence, by assuming that $t$ may be randomly distributed by one of the uncertain distributions $\P\in \u$, we define a \emph{robust value-at-risk}:
\begin{subequations}\label{Prob: VaR}
	\begin{align}
		\text{VaR}_{\alpha,\u}(t) \coloneqq -\max -x^+ & \\
		\st\ & \alpha \leq \min_{\P\in \u} \P\left((-\infty,x^+]\right).
	\end{align}
\end{subequations}
With our results, we provide a mixed-integer linear
programming (MIP) approach to compute lower bounds on $\text{VaR}_{\alpha,\u}$, thereby complementing the upper bounds given in Section 4 of \cite{ghaoui2003worst}.
\end{example}

\Regie{Challenge: Der Maßraum. Vergleich mit endl-dim linearen Problemen.}
The major challenge to address DRO constraints like \eqref{Eq: DRO_constraint} lies in the fact, that we need to optimize over a set of probability measures $\Pcal$. 
In case the inner minimization problem, i.e., the optimization problem of the 'adversary' is convex, e.g., of the form $b\leq \min_{a \in U} a^\top x$ with a convex set $U \subseteq \R^n$, the derivation of equivalent reformulations is well-investigated.
One then can apply standard duality-based arguments and replace the adversarial optimization problem by the feasible region of its dual. Moreover, if the considered inner problem is finite-dimensional, the resulting problem is algorithmically tractable by standard algorithms. 

In our setting of non-convex indicator functions, however, such duality-based arguments do not hold in general. In order to nevertheless apply some reformulation approach, we cannot expect to derive a fully equivalent reformulation to Problem \eqref{problem:basic2}. Instead, we aim for a surrogate of the in general intractable inner (also called \emph{adversarial}) optimization problems
\begin{equation}\label{problem:inner}
	\min_{\P^s \in \u^s} a^s \E_{t^s \sim \P^s} \left(\mathbbm{1}_{[x^-,x^+]}(t^s)\right)
\end{equation}
by optimization problems over finitely many variables and constraints to enable algorithmic tractability, and with the property that the optimal value of the inner problem \eqref{problem:inner} is underestimated.
\textcolor{black}{
With slight abuse of terminology, we refer to any problem that yields a valid lower bound on the optimal objective value of \eqref{problem:inner} as a \emph{relaxation} of the inner problem, even if it is not obtained by enlarging its feasible region.}

\textcolor{black}{
Going over to a tractable relaxation guarantees that solutions are feasible for the original constraint \eqref{problem:basic_prob_constr2}. It may however be that some of the robust solutions are missed.
We call a problem that in this sense replaces Problem~\eqref{problem:inner} in Problem~\eqref{problem:basic2} a \emph{safe approximation} of~\eqref{problem:basic2}.}


We note that it is not to be expected in general that a solution of a safe approximation of \eqref{problem:basic2} is close to a solution of \eqref{problem:basic2} in some appropriate metric, even if the value of the tractable relaxation is arbitrarily close to the value of Problem~\eqref{problem:inner}.
Indeed, it can easily happen in many situations that replacing the adversarial problem by some relaxation, for example because of better algorithmical tractability, may have the result that there is no robust feasible solution any more, while the original robust problem may indeed have robust feasible solutions. It thus does not come as a surprise that it can also happen here. 
Illustrating this situation, it is possible to construct the following pathological example for which a slight approximation error of the optimal value of the adversarial problem renders Problem \eqref{problem:basic2} infeasible.

\begin{example}\label{example:no_convergence_to_expect_chromatography}
	We assume $T^1 := T^2 := T := [0, 5]$ and $\bar{\rho}^1 = 2\mathbbm{1}_{[1, 2]}$, $\bar{\rho}^2 = \frac{1}{4}\mathbbm{1}_{[0, 5]}$.
	We consider the problem
	\begin{align}
		\max_{x^- < x^+ \in T} \quad & x^+ - x^-\\
		\text{s.t.}\quad & \min_{\P^1 \in \u^1} a^1 \P^1\left([x^-, x^+]\right) +   \min_{\P^2 \in \u^2} a^2 \P^2\left([x^-, x^+]\right) \geq 0
	\end{align}
	with $a^1 = 0.2$, $a^2 = -0.8$, and $$\u^s = \left\{ \P \in \Mcal(T)_{\geq 0} \mid \P(T) = 1, \rho_\P(t) \leq \bar{\rho}^s(t) \text{ for all }t\in T \right\}$$ for $s= 1, 2$, with $\rho_\P$ denoting the probability density of measure $\P$ with respect to the Lebesgue measure.
	\textcolor{black}{We note that 
	\begin{align*}
		\min_{\P^2 \in \u^2} a^2 \P^2\left([x^-, x^+]\right) & =-0.8 \max_{\P^2 \in \u^2} \int_{x^-}^{x^+} \rho_{\P^2}(t) dt\\
		& = -0.8 \min \left\{1, \frac{1}{4}(x^+ - x^-)\right\},	
	\end{align*}
	where $\rho_{\P^2}$ is chosen as $\mathbbm{1}_{[a, b]}$ with $b-a=4$ in a way to maximize the intersection of $[a,b]\cap [x^-,x^+]$. We note that for every feasible $x^-,x^+$, we have that 
	\begin{align}
			\min_{\P^1 \in \u^1} & a^1 \P^1\left([x^-, x^+]\right) \geq - \min_{\P^2 \in \u^2} a^2 \P^2\left([x^-, x^+]\right)\notag\\
			& \Leftrightarrow \min_{\P^1 \in \u^1} 0.2 \P^1\left([x^-, x^+]\right) \geq 0.8 \min \left\{1, \frac{1}{4}(x^+ - x^-)\right\}\notag\\
			& \Leftrightarrow 1\geq \min_{\P^1 \in \u^1}  \int_{x^-}^{x^+} \rho_{\P^1}(t) dt \geq \min\left\{4,(x^+ - x^-)\right\}\label{eq:lb_P1}
	\end{align}}
	and consequently obtain $x^+-x^-\leq 1$.
	If the length of the interval $[x^-, x^+] \cap [1, 2]$ is contained in $\left(\frac{1}{2},1\right)$, the measure with density
	\begin{equation*}
		\rho := 2\mathbbm{1}_{[1, 2] \setminus [x^-, x^+]} + \lambda \mathbbm{1}_{[1, 2] \cap [x^-, x^+]}
	\end{equation*}
	with $\lambda < 1$ such that $\rho$ is a probability density is feasible for $\min_{\P^1 \in \u^1} a^1 \P^1\left([x^-, x^+]\right)$, with a value smaller or equal to $\lambda(x^+ - x^-)$.
	If the length of the interval $[x^-, x^+] \cap [1, 2]$ is contained in $\left[0,\frac{1}{2}\right]$, the measure with density
	\begin{equation*}
		\rho := \lambda \mathbbm{1}_{[1, 2] \setminus [x^-, x^+]}
	\end{equation*}
	with $\lambda \geq 1$ such that $\rho$ is a probability density is feasible for $\min_{\P^1 \in \u^1} a^1 \P^1\left([x^-, x^+]\right)$, with a value of $0$.
	
	Hence, for every point $x^- < x^+$ except for $x^-=1, x^+=2$, we have that $\min_{\P^1 \in \u^1} a^1 \P^1\left([x^-, x^+]\right) \leq \lambda(x^+-x^-)$ with $0\leq \lambda < 1$, which violates \eqref{eq:lb_P1} and thus renders $x^-,x^+$ infeasible. Consequently, the problem has exactly one feasible point, namely $(x^-, x^+) = (1, 2)$, which fulfills the DRO constraint with equality.
	If we instead use strict relaxations of the inner problem, the outer problem becomes infeasible.
\end{example}

In this work, we will present a tractable relaxation of Problem~\eqref{problem:inner} and prove that it converges to ~\eqref{problem:inner}. 
Although the example demonstrates that we cannot expect convergence of solutions of safe approximations to an optimal solution of Problem~\eqref{problem:basic2}, even if the inner problems converge, our computational results show that for realistically-sized uncertainties our safe approximation nevertheless leads to high-quality robust solutions.




\textcolor{black}{After having established these key properties of the interactions between the outer and inner problem, we would briefly like to highlight that the independence of the random variables $t^s$ allows us to investigate the adversarial optimization problems for all $s\in S$ separately. Hence, we suppress $s$ whenever suitable. However, even with the assumed independence DRO constraints remain challenging as both, the inherent bilevel structure as well as the fact that the ambiguity sets $\Pcal^s$ are generally infinite-dimensional, can cause intractability. Thus, we now specify the ambiguity sets $\Pcal^s$ addressed in this article, which are based on the classical moment-based ambiguity sets as seen in, e.g. \cite{Delage2010a} or \cite{Wiesemann2014a}, but are linear with respect to $\P^s$ in order to allow for (linear) dualization and capture the truncated nature of the distributions considered in Section \ref{Sec:comp-results}. This choice of constraints as well as any subset of these constraints allows for the tractable safe approximation presented here.
}


\subsection{Ambiguity Sets with Moment Control and Envelope Constraints}\label{sec:ambiguity_set}

\Regie{Beschreiben wie die Ambiguity set aussieht - das ist Problemformulierung.}
\textcolor{black}{To this end, we present two classes of constraints that can be used to define ambiguity sets. First, we are going to discuss constraints on the first two moments, where we relax the variance constraint in order to obtain a linear constraint that accounts for the influence of fluctuations in the first moment. Second, we consider an \emph{envelope constraint}, which aims to determine a compact area that bounds the density of the measure $\P^s$. The envelope is then approximated by a finite set of confidence set like constraints to achieve a finite number of constraints. It is worth noting that both, moment constraints as well as confidence set constraints are rather standard constraints in the literature, see e.g. \cite{Delage2010a} or \cite{Wiesemann2014a} , whereas our linear relaxation and the design of the envelope enable the tractability of the safe approximation. However, should corresponding information not be available for some application, such constraints could always be modeled in a redundant fashion.}



\subsubsection{Moment Constraints.}
First, we assume that the \emph{first moment}, i.e. the expectation, of $\mass{}$ is contained in an interval (see also~\cite{Delage2010a}):
\textcolor{black}{
\begin{equation}\label{Eq: Sec2_first_moment}
	\EW{\mass{}}(t) \in [\mu_-, \mu_+].
\end{equation}
Furthermore, we want to impose a restriction that excludes distributions with extensively large \emph{variance}. As
\begin{equation}\label{Eq: Sec2_second_moment}
	\text{Var}_{\mass{}}(t) \leq  \sigma_+ 
\end{equation}
is in general non-linear in $\P$, we cannot impose the constraint without losing the possibility to utilize linear duality to reformulate the adversarial problem.
Instead, we impose the constraint
\begin{equation}\label{Eq: Sec2_second_moment_true}
	\E_{\P}(t^2) - (\mu_-+\mu_+) \E_{\P}(t) + \mu_+\mu_- \leq  \sigma_+,
\end{equation}}
which is linear in $\P$. Indeed, we prove next that it is fulfilled by all measures $\P$ that satisfy \eqref{Eq: Sec2_second_moment}. We additionally show that the variance over distributions that fulfill \eqref{Eq: Sec2_second_moment_true} is bounded.

\begin{lemma}
	A probability measure $\P$ that fulfills Constraints~\eqref{Eq: Sec2_first_moment} and \eqref{Eq: Sec2_second_moment} fulfills \eqref{Eq: Sec2_second_moment_true}.
\end{lemma}

\begin{proof}
	It holds that
	\begin{equation*}
		\text{Var}_\P(t) = \E_{\P}(t^2) - \E_\P(t)^2.
	\end{equation*}
	We further observe that $\E_\P(t) \in [\mu_-, \mu_+]$ implies that
	\begin{equation*}
		0 \geq \left(\E_\P(t) - \mu_-\right)\left(\E_\P(t) - \mu_+\right) = \E_\P(t)^2 - \mu_{+}\E_\P(t) - \mu_{-}\E_\P(t) + \mu_{+}\mu_{-}.
	\end{equation*}
	Hence,
	\begin{equation*}
		\text{Var}_\P(t) = \E_\P(t^2) - \E_\P(t)^2 \geq \E_{\P}(t^2) - (\mu_-+\mu_+) \E_{\P}(t) + \mu_{+}\mu_{-}.
	\end{equation*}
	As the left-hand-side of \eqref{Eq: Sec2_second_moment} is an upper bound of the left-hand-side of \eqref{Eq: Sec2_second_moment_true}, the latter is a relaxation of the prior.
\end{proof}

\begin{lemma}\label{lemma:bound_variance_relaxation}
	The variance of a probability measure $\P$ that fulfills Constraints~\eqref{Eq: Sec2_first_moment} and \eqref{Eq: Sec2_second_moment_true} is not larger than $\sigma_+ + \left(\frac{\mu_-+\mu_+}{2}\right)^2 - \mu_{+}\mu_{-}$.
\end{lemma}

\begin{proof}
	It holds that
	\begin{align*}
		\text{Var}_\P(t) &\leq \E_\P\left(\left(t - \frac{\mu_-+\mu_+}{2}\right)^2\right)\\
		&= \E_{\P}(t^2) - (\mu_-+\mu_+) \E_{\P}(t) + \mu_+\mu_- + \left(\frac{\mu_-+\mu_+}{2}\right)^2 - \mu_+\mu_-\\
		&\leq \sigma_+ + \left(\frac{\mu_-+\mu_+}{2}\right)^2 - \mu_+\mu_-.
	\end{align*}
\end{proof}
Hence, we have demonstrated that the constraint \eqref{Eq: Sec2_second_moment_true}, which is linear in $\P$ is a relaxation of \eqref{Eq: Sec2_second_moment}, which still provides a bounded variance. Moreover, for $\mu_-=\mu_+=\E_{\P}(t)$, Lemma \ref{lemma:bound_variance_relaxation} shows that the relaxation is sharp.

\subsubsection{Envelope Constraints.}
\Regie{Weitere Spezifizierung der Ambiguity Set - diesmal mit Envelope Constraints.}
In order to model ambiguity sets that are not unnecessarily large, we allow further restrictions on the uncertain measure $\mass{}$ as follows: Assuming that $\P$ has a probability density, denoted as $\rho$, we define an \emph{envelope constraint} as
\begin{equation}\label{Eq: envelope}
	\uRTD{}{t} \leq \bound{}{t} \text{ for all }t\in T,
\end{equation}
with $\bound{}{t}\colon T \to \R$ bounded.
The envelope function restricts the value of the density of $\mass{}$. Thus, it incorporates additional information into the ambiguity set.
It excludes measures that concentrate all the probability mass around a single point and thus have a high density at this point, e.g., Dirac point measures.
If the uncertainty is parametrized, bounds for \eqref{Eq: envelope} may be fairly simple to obtain. This can be illustrated by the following example:

\begin{example}
	Let $T=[t_0,t_{\max}]$ and $\uRTD{\mu}{t}$ be the probability density function of a normal distribution with expectation $\mu$. Suppose $\mu$ varies between $\mu_{-}$ and $\mu_{+}$ and $\uRTD{}{t}$ lies in the convex hull of $\left\{\uRTD{\mu}{t}: \mu \in [\mu_{-},\mu_{+}]\right\}$. Then, a valid definition of $\bound{}{t}$ would be
	\[
	\bound{}{t} \coloneqq\left\{\begin{array}{lll} \uRTD{\mu_{-}}{t} , & t\in [t_0, \mu_{-}] \\
		\uRTD{\mu }{\mu} , & t\in [\mu_{-}, \mu_{+}]\\
		\uRTD{\mu_{+}}{t} , & t\in [\mu_{+},t_{\max}].\end{array}\right.
	\]
\end{example}
In addition, it is worth noting that envelope constraints can in some instances effectively replace discrepancy based constraints such as Wasserstein balls.
\begin{remark}
	In discrepancy-based DRO models,
	$\u$ is also constrained, e.g. via Wasserstein balls around some distribution. Given a nominal
	probability distribution $\mass{0}\in \u$, these measures usually
	limit the probability mass that needs to be transferred to reach
	another $\mass{}\in \u$. In our computational results in Section \ref{Sec:comp-results}, we use the envelope constraints
	constraints to achieve a similar effect. Furthermore, if we consider
	an ambiguity set $\u$, that contains realistic uncertainties with a rather large Wasserstein distance between each other, an additional Wasserstein constraint would not strengthen the model further. This is illustrated by the application in Section \ref{Sec:comp-results}. Finally, the computational results show that with the ambiguity sets considered, our results lead to robust solutions of good quality in a relevant application.
\end{remark}

%


\section{Safe Approximation of a DRO constraint}\label{sec:safe_approx}
In the following section, we present a relaxation of the inner problem \eqref{problem:inner}.
If we replace the inner problem by these in Problem~\eqref{problem:basic2}, we obtain an algorithmically tractable safe approximation of Problem~\eqref{problem:basic2}.

Let $\Ccal(T)$ be the set of continuous functions on $T$, $\Mcal(T)$ be the set of \emph{signed Radon measures}, and $\Mcal(T)_{\geq 0}$ the set of \emph{non-negative Radon measures}. As illustrated in Section III.3.2 in \cite{Barvinok2002a}, the corresponding inner product $\langle \cdot , \cdot \rangle: \Ccal(T)\times \Mcal(T) \rightarrow \R$ defined by
$$\langle g, \P\rangle \coloneqq \int_T g d\P$$
constitutes a non-degenerate inner product or a \emph{duality}. As probability measures are specific signed Radon measures, the above duality is well-defined. We aim for underestimating the objective function and relax the constraints of \eqref{problem:inner} with the help of the duality, i.e., we aim for continuous functions $g, g_i, i\in I$ with $I$ being some index set, and numbers $d_i$ for $i\in I$ that enable us to express a relaxation of Problem~\eqref{problem:inner} as
\begin{subequations}\label{Prob: DRO_with_undefined_ambiguity_set}
\begin{align}
	\min_{\mass{} \in \Mcal(T)_{\geq 0}}~&  \left\langle g,\mass{}\right\rangle & \\
	\label{Prob: DRO_with_undefined_ambiguity_set_constr}\text{s.t. }& \left\langle g_i,\mass{}\right\rangle \geq d_i & \text{ for all } i \in I.
\end{align}
\end{subequations}
To express that $\P$ is a probability measure with mass $1$, we impose the constraint
\begin{equation*}
	\langle 1,\mass{}\rangle \geq 1 \text{ and }\langle -1,\mass{}\rangle \geq -1
\end{equation*}
Constraint \eqref{Eq: Sec2_first_moment} is reformulated to
\begin{equation*}
	\langle t,\mass{}\rangle \geq \mu_- \text{ and }\langle -t,\mass{}\rangle \geq -\mu_+.
\end{equation*}
Constraint \eqref{Eq: Sec2_second_moment_true} can be expressed with the duality by
\begin{equation}\label{cons:variance_conservative}
	\langle t^2-(\mu_-+\mu_+) t,\mass{}\rangle \leq \sigma_+ - \mu_{+}\mu_{-}.
\end{equation}

We approximate Constraints \eqref{Eq: envelope} with a finite number of constraints of the form \eqref{Prob: DRO_with_undefined_ambiguity_set_constr}, that resemble \emph{confidence set constraints}, i.e., constraints that restrict the probability mass of an event $D_i$ by $\P(t\in D_i)\geq d_i$.
To this end, we consider the sequences $\delta_N > 0$ with $\delta_N \to 0$ and $\eta_N > 0$ with $\frac{\eta_N}{\delta_N} \to 0$, and a finite sample $T_N\coloneqq \delta_N \mathbbm{Z} \cap T$ of $T$.
We further define for $\eta > 0$ and $\tau < \tilde{\tau} \in T$ the functions $\mathbbm{1}^{\eta}_{[\tau, \tau + \delta_N]}$ as continuous approximations of $\mathbbm{1}_{[\tau, \tau + \delta_N]}$, with

\begin{equation*}
	\mathbbm{1}^{\eta}_{[\tau, \tilde{\tau}]}(t) := \begin{cases}
		1 \text{ if } t \in [\tau, \tilde{\tau}]\\
		0 \text{ if } t \notin [\tau - \eta, \tilde{\tau} + \eta]\\
		\in [0, 1] \text{ otherwise,}
	\end{cases}
\end{equation*}
If \eqref{Eq: envelope} holds for $\P$, then we can derive that
\begin{equation*}
	\langle \mathbbm{1}^{\eta_N}_{[\tau,\tau+\delta_N]}, \mass{} \rangle = \int_{\tau - \eta_N}^{\tau + \delta_N + \eta_N} \mathbbm{1}^{\eta_N}_{[\tau,\tau+\delta_N]} d\P \leq \int_{\tau - \eta_N}^{\tau + \delta_N + \eta_N} \mathbbm{1}^{\eta_N}_{[\tau,\tau+\delta_N]} \bar{\rho}(t) dt
\end{equation*}
%
Hence, replacing \eqref{Eq: envelope} by
\begin{equation}\label{Eq: envelope_discretized}
	\langle \mathbbm{1}^{\eta_N}_{[\tau,\tau+\delta_N]}, \mass{} \rangle \leq \bar{\rho}^+_N(\tau) \qquad \forall \tau \in T_N
\end{equation}
with
\begin{equation*}
	\bar{\rho}^+_N(\tau) \coloneqq \int_{\tau - \eta_N}^{\tau + \delta_N + \eta_N} \mathbbm{1}^{\eta_N}_{[\tau,\tau+\delta_N]} \bar{\rho}(t) dt
\end{equation*}
leads to a relaxation of the semiinfinite Problem \eqref{problem:inner} with finitely many constraints.

To underestimate the objective function of \eqref{problem:inner} with the help of the duality, we underestimate $a\mathbbm{1}_{[x^-, x^+]}$ by the continuous function $a \mathbbm{1}^c_{I(x^-, x^+)}$, with
\begin{equation*}
	\mathbbm{1}^c_{[x^-, x^+]}(t) :=
	\begin{cases}
		0, \text{ if }t \notin [x^-, x^+],\\
		1, \text{ if }t \in [\tau^-_N, \tau^+_N],\\
		\in [0, 1] \text{ otherwise,}
	\end{cases}
\end{equation*}
if $a > 0$, with
\begin{equation}\label{taudef}
\tau^-_N := \inf\left\{\tau \in T_N\vert \tau > x^-\right\}, \tau^+_N := \sup\left\{\tau \in T_N\vert \tau < x^+\right\},
\end{equation}
and
\begin{equation*}
	\mathbbm{1}^c_{[x^-, x^+]}(t) :=
	\begin{cases}
		0, \text{ if }t \notin [\tau^-_N, \tau^+_N],\\
		1, \text{ if }t \in [x^-, x^+],\\
		\in [0, 1] \text{ otherwise,}
	\end{cases}
\end{equation*}
if $a < 0$, with
\begin{equation}\label{taudef2}
\tau^-_N := \sup\left\{\tau \in T_N\vert \tau < x^-\right\}, \tau^+_N := \inf\left\{\tau \in T_N\vert \tau > x^+\right\}.
\end{equation}
We derive that
\begin{equation*}
	a\P\left([x^-, x^+]\right) = \int_T a\mathbbm{1}_{[x^-, x^+]}(t) d\P(t) \geq \int_T a\mathbbm{1}^c_{[x^-, x^+]}(t) d\P(t) = \langle a\mathbbm{1}^c_{[x^-,x^+]},\mass{}\rangle.
\end{equation*}
Combining the observations, we can relax \eqref{problem:inner} in the form of \eqref{Prob: DRO_with_undefined_ambiguity_set} as:
\begin{subequations}
	\label{Prob: Primal_Purity_Constraint_envelope}
	\begin{align}
		\inf_{\mass{} \in \mathcal{M}(T)_{\ge 0}}~& \langle a\mathbbm{1}^c_{[x^-,x^+]},\mass{}\rangle && \label{Constr: Objective_Primal_Purity_Constraint}\\
		\text{s.t.}~&\langle 1, \mass{} \rangle \geq 1 \label{Constr: Primal_Purity_Constraint_envelope1}\\
		&\langle -1, \mass{} \rangle \geq -1 \\
		& \langle -t, \mass{}\rangle \geq -\mu_{+},\label{Constr: First_Moment1} \\
		& \langle t, \mass{}\rangle \ge \mu_{-}, \label{Constr: First_Moment2}\\
		&\langle -t^2+(\mu_-+\mu_+) t ,\mass{}\rangle \geq - \sigma_+ + \mu_+\mu_-  \label{Constr: Second_Moment}\\
		& \langle -\mathbbm{1}^{\eta_N}_{[\tau,\tau+\delta_N]},\mass{} \rangle \geq - \bar{\rho}^+_N(\tau) && \text{ for all } \tau \in T_N. \label{Constr: envelope_discretized}
	\end{align}
\end{subequations}


In order to dualize Problem~\eqref{Prob: Primal_Purity_Constraint_envelope}, we need to determine an adjoint operator to the primal constraint operator
\begin{equation*}
\mathcal{A} : \Mcal(T) \rightarrow \R^{5 + |T_N|},
\end{equation*}
\begin{equation*}
	\P \mapsto \left(\langle 1, \mass{} \rangle, \langle -1, \mass{} \rangle, \langle -t, \mass{} \rangle, \langle t, \mass{} \rangle, \langle -t^2 + (\mu_-+\mu_+) t, \mass{} \rangle, \langle -\mathbbm{1}^{\eta_N}_{[\tau, \tau + \delta_N]}, \mass{} \rangle_{\tau \in T_N}\right)^T.
\end{equation*}
To this end, let us denote the dual variables originating from constraints \eqref{Constr: Primal_Purity_Constraint_envelope1} -- \eqref{Constr: Second_Moment} by $y_k$, e.g., for \eqref{Constr: First_Moment1}, we have $k=3$. Additionally, we denote the dual variables that correspond to the aggregated envelope constraint \eqref{Constr: envelope_discretized} as $z \in \mathbb{R}_{\ge 0}^{T_N}$.
The adjoint operator with respect to $\mathcal{A}$ is
\begin{equation*}
	\mathcal{B}: \R^{5 + |T_N|}\rightarrow \Ccal(T), 
\end{equation*}
\begin{equation*}
	\left(y_1,\ldots , y_5, (z_\tau)_{\tau \in T_N}\right)\mapsto y_1-y_2-y_3t+y_4t + y_5\left(-t^2+ (\mu_-+\mu_+) t\right) + \sum_{\tau\in T_N} \mathbbm{1}_{[\tau,\tau+\delta_N]}^{\eta_N}(t) z_\tau.	
\end{equation*}
$\mathcal{B}$ is unique due to Riesz' representation theorem, see e.g., \cite{Brezis2010a}.
Consequently, as a dual program to Problem~\eqref{Prob: Primal_Purity_Constraint_envelope} we obtain:
\begin{subequations}
	\label{Prob: Dual_Purity_Constraint}
	\begin{align}
		\sup_{y \in \mathbb{R}^{5}_{\ge 0}, z \in \mathbb{R}_{\ge 0}^{T_N}}& \langle (1,-1, -\mu_{+}, \mu_{-}, - \sigma_+ +\mu_+\mu_-),y\rangle - \sum_{\tau \in T_N} \bar{\rho}^+_N(\tau) \boundvariable{\tau} \label{Constr: dual_objective}\\
		\st\ & a\mathbbm{1}_{[x^-,x^+]}^c(t) - y_{1} + y_{2}+y_{3} t - y_{4} t + y_{5} \left( t^2-(\mu_-+\mu_+) t \right) \notag\\
		& \qquad\qquad\qquad\qquad\qquad + \sum_{\tau \in T_N} \mathbbm{1}^{\eta_N}_{[\tau,\tau+\delta_N]}(t) \boundvariable{\tau}\in \Ccal(T)_{\ge 0}\label{Constr: dual_purity}
	\end{align}
\end{subequations}
\Regie{Unter bestimmten Voraussetzungen gilt starke Dualität.}
As typical in reformulation
approaches in robust optimization, we aim at using strong duality. Indeed, next we establish strong duality between \eqref{Prob:
	Primal_Purity_Constraint_envelope} and \eqref{Prob:
	Dual_Purity_Constraint} that can be seen as a direct
consequence of Corollary 3.1 in \cite{Shapiro2000a} or as a direct
consequence of the dualization theory illustrated, e.g., in \cite{Barvinok2002a}.
To apply it, we assume that the feasible set of Problem~\eqref{problem:inner} is non-empty, e.g., because a nominal distribution is contained.

\begin{lemma}\label{Thm: strong_duality}
	Suppose that the feasible set of Problem~\eqref{Prob: Primal_Purity_Constraint_envelope} is nonempty. Then, the duality gap of the problems \eqref{Prob: Primal_Purity_Constraint_envelope} and \eqref{Prob: Dual_Purity_Constraint} is zero.
\end{lemma}

\begin{proof}
	We observe that Problem~\eqref{Prob: Primal_Purity_Constraint_envelope} is "consistent" as defined in \cite{Shapiro2000a}, since its feasible set is not empty. Furthermore, $T$ is compact and the functions in the objective as well as in the constraints of \eqref{Prob: Primal_Purity_Constraint_envelope} are continuous. Hence, strong duality follows from Corollary 3.1 in \cite{Shapiro2000a}.
\end{proof}

First, we can neglect to explicitly demand continuity in \eqref{Constr: dual_purity} since the left hand side already consists only of continuous functions. Hence, the above program is a semi-infinite program, particularly a linear program with infinitely many constraints.

Second, in order to simplify Constraint \eqref{Constr: dual_purity} one observes that we can guarantee \eqref{Constr: dual_purity} by going back to indicator functions (instead of their continuous approximations), as \eqref{Constr: dual_purity} is implied by
\begin{align}\label{Constr: dual_purity3}
	& a\mathbbm{1}_{I(a, x^-, x^+)}(t) - y_{1} + y_{2}+y_{3} t - y_{4} t + y_{5} \left(t^2-(\mu_-+\mu_+) t\right)\notag\\
	& \qquad + \sum_{\tau \in T_N} \mathbbm{1}_{[\tau,\tau+\delta_N)}(t) \boundvariable{\tau} \geq 0~ \text{ for all } t \in T.
\end{align}
with $I(a, x^-, x^+) := [\tau^-_N, \tau^+_N)$.
\textcolor{black}{This is due to the fact that given $\tau^-_N, \tau^+_N$ have been defined for $a>0$ in \eqref{taudef} and for $a<0$ in \eqref{taudef2}, we have $a\mathbbm{1}_{[x^-,x^+]}^c \geq a \mathbbm{1}_{I_{(a,x^-,x^+)}}$. Together with the observation that $\mathbbm{1}_{[\tau,\tau+\delta_N]}^{\eta_N} \geq \mathbbm{1}_{[\tau,\tau+\delta_N)}$, this indeed illustrates that \eqref{Constr: dual_purity3} yields an inner approximation of \eqref{Constr: dual_purity}. Consequently, solutions $x^-,x^+$ feasible for \eqref{Constr: dual_purity3} are feasible for \eqref{problem:inner} as well.}

The left-hand side of Constraint~\eqref{Constr: dual_purity3} is the sum of indicator functions of intervals and the polynomial
$$p_y(t)\coloneqq - y_{1} + y_{2}+y_{3} t - y_{4} t + y_{5} \left(t^2-(\mu_-+\mu_+) t\right)$$
with $p_{min} := \min_{t\in T} p_y(t)$. We introduce
$$f_{y,z}(t) \coloneqq p_y(t) + \sum_{\tau\in T_N} \mathbbm{1}_{[\tau,\tau+\delta_N)}(t)z_\tau + a\mathbbm{1}_{I(a, x^-, x^+)}(t)$$
for the left-hand side of Constraint~\eqref{Constr: dual_purity3}.
We can prove the following Lemma about the infimum of $f_{y, z}$.

\begin{lemma}\label{lemma:inf_f}
	It holds for given $y, z \geq 0$ and $x^-, x^+ \in T$ that
	\begin{equation*}
		\inf_{t\in T} f_{y, z}(t) = \min\left\{ f_{y, z}(p_{min}), \min_{\tau \in T_N} f_{y, z}(\tau), \min_{\tau \in T_N \setminus \{M\}} p_y(\tau + \delta_N) + z_{\tau} + a \mathbbm{1}_{I(a, x^-, x^+)}(\tau) \right\},
	\end{equation*}
	with $M := \max\{\tau \vert \tau \in T_N\}$.
\end{lemma}

\begin{proof}
	We note that for given $y, z$ the function $f_{y,z}$ is piecewise continuous on the intervals $[\tau, \tau + \delta_N)$ for $\tau \in T_N$. For all but the interval containing the global minimum of $p_y$, the function is monotonous. Hence, the infimum of $f_{y, z}$ is either the function value at the global minimum $p_{min}$ of $p_y$ or one of its limits at the boundaries of $[\tau, \tau + \delta_N)$ for a $\tau \in T_N$. We calculate that
	\begin{equation*}
		\lim_{t \to \tau, t > \tau} f_{y, z}(t) = p_y(\tau) + z_\tau + a\mathbbm{1}_{I(a, x^-, x^+)}(\tau) = f_{y, z}(\tau),
	\end{equation*}
	as $f_{y, z}$ is a right-continuous function, and
	\begin{equation*}
		\lim_{t \to \tau + \delta_N, t < \tau + \delta_N} f_{y, z}(t) = p_y(\tau + \delta_N) + z_\tau + a\mathbbm{1}_{I(a, x^-, x^+)}(\tau),
	\end{equation*}
	as $p_y$ is continuous, $\mathbbm{1}_{[\tau, \tau+\delta_N)}(t) = \mathbbm{1}_{[\tau, \tau+\delta_N)}(\tau)$, and $\mathbbm{1}_{I(a, x^-, x^+)}(t) = \mathbbm{1}_{I(a, x^-, x^+)}(\tau)$ for all $t \in [\tau, \tau+\delta_N)$.
	The claim follows.
\end{proof}

With the help of Lemma \ref{lemma:inf_f} we can replace the (infinite) System \eqref{Constr: dual_purity3} by a finite system.

\begin{lemma}
	Constraint \eqref{Constr: dual_purity3} is equivalent to the system
	\begin{subequations}\label{constr:finite}
		\begin{align}
			\label{constr:finite1}f_{y, z}(p_{min}) \geq 0 & \\
			\label{constr:finite2}f_{y, z}(\tau) \geq 0 & \text{ for all } \tau \in T_N\\
			\label{constr:finite3}p_y(\tau + \delta_N) + z_\tau + a\mathbbm{1}_{I(a, x^-, x^+)}(\tau) \geq 0 & \text{ for all } \tau \in T_N \setminus \{M\}. 
		\end{align}
	\end{subequations}
\end{lemma}

\begin{proof}
	This is an immediate consequence of Lemma \ref{lemma:inf_f}: Constraint \eqref{Constr: dual_purity3} is equivalent to
	\begin{equation*}
		\inf_{t\in T} f_{y, z}(t) \geq 0,
	\end{equation*}
	which is, according to Lemma \ref{lemma:inf_f}, equivalent to
	\begin{equation*}
		\min\left\{ f_{y, z}(p_{min}), \inf_{\tau \in T_N} f_{y, z}(\tau), \inf_{\tau \in T_N \setminus \{M\}} p_y(\tau + \delta_N) + z_{\tau} + a \mathbbm{1}_{I(a, x^-, x^+)}(\tau) \right\} \geq 0,
	\end{equation*}
	which is equivalent to Constraints \eqref{constr:finite}. The claim follows.
\end{proof}

We note that Constraints~\eqref{constr:finite2} and \eqref{constr:finite3} are, for fixed $x^-, x^+$, linear. Constraint~\eqref{constr:finite1} is not linear as it contains $p_{min}$, which itself depends on the choice of the coefficients $y$. Hence, we aim for linear constraints that ensure \eqref{constr:finite1} in the next lemma.

\begin{lemma}
	Constraints~\eqref{constr:finite1}-\eqref{constr:finite3} are implied by the constraints
	\begin{subequations}
		\begin{align}
			\label{constr:finite2.2}f_{y, z}(\tau) \geq \frac{\delta_N^2}{4}y_5 & \text{ for all } \tau \in T_N\\
			\label{constr:finite3.2}p_y(\tau + \delta_N) + z_\tau + a\mathbbm{1}_{I(a, x^-, x^+)}(\tau) \geq \frac{\delta_N^2}{4}y_5 & \text{ for all } \tau \in T_N \setminus \{M\}.
		\end{align}
	\end{subequations}
\end{lemma}

\begin{proof}
	We define
	\begin{equation*}
		\bar{\tau} := \sup\left\{\tau \in T_N \vert \tau \leq p_{min}\right\}.
	\end{equation*}
	We note that either $p_{min} - \bar{\tau} \leq \frac{\delta_N}{2}$ or $\bar{\tau} + \delta_N - p_{min} \leq \frac{\delta_N}{2}$. We further note that representing $p_y$ in its vertex form yields $p_y(t) = y_5(t - p_{min})^2 + p_y(p_{min})$.
	In the first case, we calculate that
	\begin{equation*}
		f_{y, z}(\bar{\tau}) - f_{y, z}(p_{min}) = p_y(\bar{\tau}) - p_y(p_{min}) = y_5(\bar{\tau} - p_{min})^2 + p_y(p_{min}) - p_y(p_{min}) \leq \frac{\delta_N^2}{4}y_5.
	\end{equation*}
	It follows that
	\begin{equation*}
		f_{y, z}(\bar{\tau}) - \frac{\delta_N^2}{4}y_5 \leq f_{y, z}(p_{min}).
	\end{equation*}
	Constraint~\eqref{constr:finite2.2} implies \eqref{constr:finite1}, as
	\begin{equation*}
		f_{y, z}(\tau) \geq \frac{\delta_N^2}{4}y_5 \text{ for all }\tau \in T_N \implies f_{y, z}(\bar{\tau}) - \frac{\delta_N^2}{4}y_5 \geq 0 \implies f_{y, z}(p_{min}) \geq 0.
	\end{equation*}
	The second case is analogous via \eqref{constr:finite3.2} and since \eqref{constr:finite2} and \eqref{constr:finite3} are immediately implied by \eqref{constr:finite2.2} and \eqref{constr:finite3.2} the claim follows.
\end{proof}

We conclude that the finitely many linear Constraints~\eqref{constr:finite2.2} and \eqref{constr:finite3.2} are an inner approximation of the semi-infinite Constraint~\eqref{Constr: dual_purity3}. This means that these new constraints ensure that \eqref{Constr: dual_purity3} is satisfied, but that \eqref{Constr: dual_purity3} may have more feasible solutions.

In addition, we note that the optimization problem $\max \eqref{Constr: dual_objective} \text{ s.t. }\eqref{constr:finite2.2}, \eqref{constr:finite3.2}$ has an interesting interpretation, that becomes obvious if we re-dualize it to the primal variable space. To this end, let us denote the dual variable to Constraint \eqref{constr:finite2.2} and \eqref{constr:finite3.2} by $w^-$ and $w^+$ respectively. Then, the dual of the problem is
\begin{subequations}\label{problem:safe_approx_primal}
	\begin{align}
		\min_{\substack{w^- \in \R^{T_N}_{\geq 0},\\ \quad\ \ w^+ \in \R^{T_N\setminus\{M\}}_{\geq 0}}}~&  a \sum_{\tau \in T_N} \mathbbm{1}_{I(a, x^-, x^+)}(\tau) w_\tau^- + a \sum_{\tau \in T_N} \mathbbm{1}_{I(a, x^-, x^+)}(\tau) w_\tau^+ \\
		\text{s.t.}~&\sum_{\tau \in T_N} w_\tau^- + w_\tau^+ = 1\label{constr:prob_measure}\\
		\label{problem:safe_approx_primal_exp}& \sum_{\tau \in T_N} \tau w_\tau^- + (\tau + \delta_N)w_\tau^+ \in [\mu_-, \mu_+]\\
		\label{constr:variance}&\sum_{\tau \in T_N} \left(\tau^2 - (\mu_-+\mu_+)\tau - \frac{\delta_N^2}{4}\right) w_\tau^- \notag\\
		& \qquad + \left((\tau + \delta_N)^2 - (\mu_-+\mu_+)(\tau + \delta_N) - \frac{\delta_N^2}{4}\right) w_\tau^+ \notag\\ 
		& \qquad \leq \sigma_+ - \mu_+\mu_-\\
		& w_\tau^- + w_\tau^+ \leq \bar{\rho}^+_N(\tau) \text{ for all } \tau \in T_N.
	\end{align}
\end{subequations}
Exploiting \eqref{constr:prob_measure}, we can reformulate Constraint~\eqref{constr:variance} to
\begin{equation}\label{constr:variance_ref}
	\sum_{\tau \in T_N} \left(\tau^2 - (\mu_-+\mu_+)\tau\right) w_\tau^- + \left((\tau + \delta_N)^2 - (\mu_-+\mu_+)(\tau + \delta_N)\right) w_\tau^+ \leq \sigma_+ - \mu_+\mu_- + \frac{\delta_N^2}{4}
\end{equation}
Hence, the problem restricts the feasible probability measures to measures that concentrate all their mass on points in $T_N$, but have a slightly higher bound on their quadratic deviation.
In order to ease the representation of the model by a mixed-integer linear problem, see Section~\ref{sec:mip}, we further approximate the objective function of \eqref{problem:safe_approx_primal} from below. To this end, we replace the function $a\mathbbm{1}_{I(a, x^-, x^+)}$ by $a\mathbbm{1}_{\tilde{I}(a, x^-, x^+)}$, with
\begin{equation*}
	\tilde{I}(a, x^-, x^+) := [x^- + \delta_N, x^+ - 2 \delta_N] \text{ if }a > 0,
\end{equation*}
and
\begin{equation*}
	\tilde{I}(a, x^-, x^+) := (x^- - 2\delta_N, x^+ + \delta_N) \text{ if }a < 0.
\end{equation*}
\textcolor{black}{Since $[x^-,x^+]\supseteq I_{(a,x^-,x^+)}=[\tau_N^-,\tau_N^+)\supseteq \tilde{I}(a, x^-, x^+)$ for $a>0$ and $[x^-,x^+]\subseteq I_{(a,x^-,x^+)}=[\tau_N^-,\tau_N^+)\subseteq \tilde{I}(a, x^-, x^+)$ for $a<0$, we obtain $a\mathbbm{1}_{I(a, x^-, x^+)}\geq a\mathbbm{1}_{\tilde{I}(a, x^-, x^+)}$ and thus a lower bound on \eqref{problem:safe_approx_primal}.}
We further note that exploiting that \eqref{constr:variance} and \eqref{constr:variance_ref} are equivalent, we can bound problem $\max \eqref{Constr: dual_objective} \text{ s.t. }\eqref{constr:finite2.2}, \eqref{constr:finite3.2}$ from below by the re-dualized problem
\begin{subequations}
	\label{Prob: Dual_Purity_Constraint_ref}
	\begin{align}
		\max_{y \in \mathbb{R}^{5}_{\ge 0}, z \in \mathbb{R}_{\ge 0}^{T_N}}& \langle \left(1,-1, -\mu_{+}, \mu_{-}, - \sigma_+ +\mu_+\mu_- - \frac{\delta_N^2}{4} \right),y\rangle - \sum_{\tau \in T_N} \bar{\rho}^+_N(\tau) \boundvariable{\tau} \label{Constr: dual_objective_ref}\\
		\st\ & \label{constr:finite2.2_ref}p_y(\tau) + z_\tau + a\mathbbm{1}_{\tilde{I}(a, x^-, x^+)}(\tau) \geq 0 \text{ for all } \tau \in T_N\\
		\label{constr:finite3.2_ref}&p_y(\tau + \delta_N) + z_\tau + a\mathbbm{1}_{\tilde{I}(a, x^-, x^+)}(\tau) \geq 0 \text{ for all } \tau \in T_N \setminus \{M\}.
	\end{align}
\end{subequations}

To conclude the section, we prove a theorem stating that the optimal value of Problem~\eqref{problem:safe_approx_primal} converges to the optimal value of the true inner problem~\eqref{problem:inner} under mild assumptions and under appropriate choice of $\eta_N$.
\begin{theorem}\label{thm:inner_convergence}
	Assume that $\u$ contains a measure $\P_0$ with $\E_{\P_0}(t) = \mu \coloneqq \frac{\mu_- + \mu_+}{2}$ and $\text{Var}_{\P_0}(t) \leq \sigma_+$, that $\mu_- < \mu_+$, and that $\rho^{max} > 0$ exists such that $\bar{\rho}(t) \leq \rho^{max}$ for all $t\in T$.
	For $\eta_N \leq \frac{\delta_N^2}{2 \rho^{max} (\bar{T}^3 + 1)}$ with $\bar{T} := \sup\{t \mid t\in T\} - \inf\{t \mid t\in T\}$ and fixed $x^-, x^+ \in T$, the difference of the optimal value of Problem~\eqref{problem:safe_approx_primal} and the optimal value of Problem~\eqref{problem:inner} is not larger than
	\begin{equation*}
		|a|\left((5\rho^{max}+1)\delta_N + 2 \max\left\{ \frac{2\delta_N}{(\mu_+ - \mu) + 2\delta_N}, \frac{(2\bar{T} + 1)\delta_N + \frac{5\delta_N^2}{4}}{(\mu^2 - \mu_+\mu_-) + (2\bar{T}+1)\delta_N + \frac{5\delta_N^2}{4}} \right\}\right).
	\end{equation*} 
\end{theorem}

\begin{proof}
	The optimal value of Problem~\eqref{problem:safe_approx_primal} is by construction not larger than the optimal value of Problem~\eqref{problem:inner}.
	Hence, to prove that the optimal values are close, we start with an optimal solution of \eqref{problem:safe_approx_primal} and construct a feasible solution of \eqref{problem:inner} with a value that is only slightly higher, so that the desired bound is satisfied.
	We note that an optimal solution to \eqref{problem:safe_approx_primal} must exist. Indeed, the problem cannot be unbounded as the feasible set is a polytope, and it cannot be infeasible as its optimal value must underestimate the value of \eqref{problem:inner}, which is finite.
	
	We first note that the feasibility of \eqref{problem:inner} implies that
	\begin{equation}\label{aux1}
		\int_T \bar{\rho}(t) dt \geq 1.
	\end{equation}
	We define for all $\tau \in T_N$
	\begin{equation*}
		\hat{\rho}^+_N(\tau) := \int_{\tau}^{\tau + \delta_N} \bar{\rho}(t)dt
	\end{equation*}
	and calculate that
	\begin{equation*}
		\bar{\rho}^+_N(\tau) - \hat{\rho}^+_N(\tau) = \int_{\tau - \eta_N}^{\tau + \delta_N + \eta_N} \left(\mathbbm{1}^{\eta_N}_{[\tau,\tau+\delta_N]}(t) - \mathbbm{1}_{[\tau, \tau + \delta_N]}(t)\right) \bar{\rho}(t) dt \leq 2 \eta_N \rho^{\max}.
	\end{equation*}
	We note that \eqref{constr:prob_measure} and \eqref{aux1} imply that
	\begin{equation*}
		\sum_{\tau \in T_N: w^-_\tau + w^+_\tau > \hat{\rho}^+_N(\tau)} (w^-_\tau + w^+_\tau) - \hat{\rho}^+_N(\tau) \leq \sum_{\tau \in T_N: w^-_\tau + w^+_\tau < \hat{\rho}^+_N(\tau)}  \hat{\rho}^+_N(\tau) - (w^-_\tau + w^+_\tau).
	\end{equation*}
	\Regie{Wir shiften sehr wenig Masse beliebig weit - Objective ändert sich maximal um die Masse die wir shiften.}
	Hence, we can construct $(\tilde{w}^-, \tilde{w}^+)$ by reducing the value of $w^-_\tau + w^+_\tau$ to $\hat{\rho}^+_N(\tau)$ for all $\tau$ where $\hat{\rho}^+_N(\tau)$ is exceeded, and increase the value of $w^-_\tau + w^+_\tau$ for suitable $\tau$ where $\hat{\rho}^+_N(\tau)$ is not reached.
	This procedure shifts probability mass of at most $2\eta_N \rho^{max} \frac{\bar{T}}{\delta_N}$. 
	This implies that the value of the left-hand side of \eqref{problem:safe_approx_primal_exp} changes by at most
	\begin{equation*}
		2\eta_N \rho^{max} \frac{\bar{T}^2}{\delta_N},
	\end{equation*}
	and the value of the left-hand side of \eqref{constr:variance_ref} changes by at most
	\begin{equation*}
		2\eta_N \rho^{max} \frac{\bar{T}^3}{\delta_N}.
	\end{equation*}
	As we have chosen $\eta_N \leq \frac{\delta_N^2}{2 \rho^{max} (\bar{T}^3 + 1)}$, both changes are bounded by $\delta_N$.
	We note that the objective value difference of $w$ and $\tilde{w}$ is bounded by $|a|\delta_N$, i.e.,
	\begin{equation*}
		\left|\sum_{\tau \in T_N} a\mathbbm{1}_{\tilde{I}(a, x^-, x^+)}(\tau) (\tilde{w}_\tau^- + \tilde{w}_\tau^+) - \sum_{\tau \in T_N} a\mathbbm{1}_{\tilde{I}(a, x^-, x^+)}(\tau) (w_\tau^- + w_\tau^+)\right| \leq |a|\delta_N.
	\end{equation*}
	
	\Regie{Wir bewegen die ganze Masse - aber maximal um $\delta_N$. Da wir maximal auf zwei Intervallen $[\tau, \tau + \delta_N)$ Auswirkungen auf die Zielfunktion haben (auf allen anderen ist sie konstant), verändert sich der Objective Value um maximal $2\rho^{max}\delta_N$.}
	The next step is to construct a measure $\P_1$ that has the property that its density is a multiple of the envelope $\bar{\rho}(t)$ on the intervals $[\tau, \tau + \delta_N)$ for all $\tau \in T_N$, and that has a mass on these intervals that is identical to $\tilde{w}^-_\tau + \tilde{w}^+_\tau$. We obtain this by setting
	\begin{equation*}
		\rho(t) := \frac{\tilde{w}^-_\tau + \tilde{w}^+_\tau}{\hat{\rho}^+_N(\tau)} \bar{\rho}(t) \text{ for } t\in [\tau, \tau + \delta_N) \text{ for all }\tau \in T_N.
	\end{equation*}
	We note that this procedure shifts probability mass of up to $1$ by at most $\delta_N$, starting from the measure that is induced by the vector $\tilde{w}$ that has its mass exclusively on the points in $T_N$.
	To this end, consider
	\begin{equation*}
		\int_{[\tau, \tau + \delta_N)} t d\P_1 - \left(\tau \tilde{w}^- + (\tau + \delta_N) \tilde{w}^+\right) \leq (\tau + \delta_N) (\tilde{w}^- + \tilde{w}^+) - \tau (\tilde{w}^- + \tilde{w}^+) = \delta_N(\tilde{w}^- + \tilde{w}^+),
	\end{equation*}
	and
	\begin{equation*}
		\int_{[\tau, \tau + \delta_N)} t d\P_1 - \left(\tau \tilde{w}^- + (\tau + \delta_N) \tilde{w}^+\right) \geq \tau (\tilde{w}^- + \tilde{w}^+) - (\tau + \delta_N) (\tilde{w}^- + \tilde{w}^+) = - \delta_N(\tilde{w}^- + \tilde{w}^+).
	\end{equation*}
	for all $\tau \in T_N$. Exploiting these inequalities, we obtain
	\begin{equation*}
		\left| \int_T t d\P_1 - \sum_{\tau \in T_N} \tau \tilde{w}^- + (\tau + \delta_N) \tilde{w}^+ \right| \leq \sum_{\tau \in T_N} \delta_N (\tilde{w}^- + \tilde{w}^+) = \delta_N,
	\end{equation*}
	as $\sum_{\tau \in T_N} \tilde{w}^- + \tilde{w}^+ = 1$.
	This implies that the value of the left-hand side of \eqref{problem:safe_approx_primal_exp} changes by at most $\delta_N$.
	With a similar argument, we can bound the change of the value of the left-hand side of \eqref{constr:variance_ref} and thus \eqref{constr:variance}. More general, it holds for measurable $f$ that
	\begin{align*}
		&\left|\int_{[\tau, \tau + \delta_N)} f(t) d\P_1 - \left(f(\tau) \tilde{w}^- + f(\tau + \delta_N) \tilde{w}^+\right)\right| \\
		&\qquad \leq \left(\sup_{t\in [\tau, \tau+\delta_N]} f(t) - \inf_{t \in [\tau, \tau+\delta_N]} f(t)\right) \left(\tilde{w}^- + \tilde{w}^+\right).
	\end{align*}
	We calculate that for $\mu=\frac{\mu_-+\mu_+}{2}$
	\begin{align*}
		& \sup_{t \in [\tau, \tau+\delta_N]} t^2 - 2\mu t - \inf_{t \in [\tau, \tau + \delta_N]} t^2 - 2 \mu t \\
		& \qquad = \sup_{t \in [\tau, \tau + \delta_N]} (t - \mu)^2 -  \inf_{t \in [\tau, \tau + \delta_N]} (t - \mu)^2 \\
		& \qquad \leq \left|(\tau - \mu + \delta_N)^2 - (\tau - \mu)^2\right| \leq 2\delta_N |\tau - \mu| + \delta_N^2 \\
		& \qquad \leq 2\delta_N \bar{T} + \delta_N^2,
	\end{align*}
	as $\mu \in T$. Hence,
	\begin{align*}
		&\left| \int_T t^2 - 2 \mu t d\P_1 - \sum_{\tau \in T_N} (\tau^2 - 2\mu \tau) \tilde{w}^- + \left((\tau + \delta_N)^2 - 2\mu(\tau + \delta_N)\right) \tilde{w}^+ \right| \\
		& \qquad \leq \sum_{\tau \in T_N} (2\delta_N\bar{T} + \delta_N^2) (\tilde{w}^- + \tilde{w}^+) = 2\delta_N\bar{T} + \delta_N^2,
	\end{align*}
	as $\sum_{\tau \in T_N} \tilde{w}^- + \tilde{w}^+ = 1$.
	We note that, as the function $\mathbbm{1}_{\tilde{I}(a, x^-, x^+)}$ is constant in all but two intervals $[\tau, \tau + \delta_N)$, the following difference is bounded as well:
	\begin{align*}
		& \left| \int_T a\mathbbm{1}_{\tilde{I}(a, x^-, x^+)} d\P_1 - \sum_{\tau \in T_N} a\mathbbm{1}_{\tilde{I}(a, x^-, x^+)}(\tau) (\tilde{w}_\tau^- + \tilde{w}_\tau^+)\right|\\
		& \qquad =  \left| \sum_{\tau \in T_N} \int_{[\tau, \tau + \delta_N]} a\mathbbm{1}_{\tilde{I}(a, x^-, x^+)} d\P_1 - a\mathbbm{1}_{\tilde{I}(a, x^-, x^+)}(\tau) (\tilde{w}_\tau^- + \tilde{w}_\tau^+)\right|\\
		& \qquad \leq |a| \sum_{\tau \in T_N} \left|  \int_{[\tau, \tau + \delta_N]} \mathbbm{1}_{\tilde{I}(a, x^-, x^+)} d\P_1 - \mathbbm{1}_{\tilde{I}(a, x^-, x^+)}(\tau) (\tilde{w}_\tau^- + \tilde{w}_\tau^+)\right|\\
		& \qquad \leq |a|2\rho^{max}\delta_N,
	\end{align*}
	This holds because only two summands of the last sum are nonzero and each summand is bounded from above by $\rho^{max}\delta_N$, as
	\begin{align*}
		&0 \leq \int_{[\tau, \tau + \delta_N]} \mathbbm{1}_{\tilde{I}(a, x^-, x^+)} d\P_1 = \int_{[\tau, \tau + \delta_N]} \mathbbm{1}_{\tilde{I}(a, x^-, x^+)}(t) \frac{\tilde{w}^-_\tau + \tilde{w}^+_\tau}{\hat{\rho}^+_N(\tau)} \bar{\rho}(t) dt\\
		&\leq \int_{[\tau, \tau + \delta_N]} \bar{\rho}(t) dt \leq \rho^{max} \delta_N,
	\end{align*}
	and
	\begin{equation*}
		0 \leq \mathbbm{1}_{\tilde{I}(a, x^-, x^+)}(\tau) (\tilde{w}_\tau^- + \tilde{w}_\tau^+) \leq (\tilde{w}_\tau^- + \tilde{w}_\tau^+) \leq \hat{\rho}^+_N(\tau) = \int_{[\tau, \tau + \delta_N]} \bar{\rho}(t) dt \leq \rho^{max} \delta_N.
	\end{equation*}
	\Regie{Wir konvexkombinieren das Maß, das wir konstruiert haben, mit $\P_0$. Da der Objective Value davon durch $1$ nach oben beschränkt ist, erhöht das den OV der Lösung um maximal den Koeffizienten.}
	We note that $\langle t, \P_1 \rangle \in [\mu^- - 2 \delta_N, \mu_+ + 2 \delta_N]$, and $\langle t^2 - (\mu_-+\mu_+) t, \P_1 \rangle \leq \sigma_+ - \mu_+ \mu_- + \frac{\delta_N^2}{4} + \delta_N + 2\bar{T} \delta_N + \delta_N^2$, as the measure induced by $w$ violates Constraint~\eqref{Eq: Sec2_second_moment_true} by at most $\frac{\delta_N^2}{4}$, going over to the measure induced by $\tilde{w}$ increases this violation by at most $\delta_N$, and going over to $\P_1$ increases this violation by at most $2\bar{T} + \delta_N^2$.
	We further note that $\langle t, \P_0 \rangle = \mu$, and $\langle t^2 - 2\mu t, \P_0 \rangle \leq \sigma_+ - \mu^2$.
	Hence, if we choose
	\begin{equation*}
		\lambda := \max\left\{ \frac{2\delta_N}{(\mu_+ - \mu) + 2\delta_N}, \frac{(2\bar{T} + 1)\delta_N + \frac{5\delta_N^2}{4}}{(\mu^2 - \mu_+\mu_-) + (2\bar{T}+1)\delta_N + \frac{5\delta_N^2}{4}} \right\},
	\end{equation*}
	we can calculate that
	\begin{align*}
		\langle t, \lambda \P_0 + (1 - \lambda) \P_1 \rangle \leq \lambda \mu + (1 - \lambda) (\mu_+ + 2 \delta_N)\\
		\overset{\text{(*)}}{\leq} \frac{\mu 2\delta_N}{(\mu_+ - \mu) + 2\delta_N} + \frac{(\mu_+ - \mu) (\mu_+ + 2\delta_N)}{(\mu_+ - \mu) + 2\delta_N}\\
		= \frac{\mu2\delta_N + \mu_+^2 - \mu_+\mu + \mu_+ 2\delta_N - \mu 2\delta_N}{(\mu_+ - \mu) + 2\delta_N}\\
		= \frac{\mu_+ (\mu_+ - \mu + 2\delta_N) }{(\mu_+ - \mu) + 2\delta_N} = \mu_+.
	\end{align*}
	We note that (*) holds because $\mu < \mu_+ + 2 \delta_N$ which in turn implies that $\lambda \left(\mu - (\mu_+ + 2 \delta_N)\right) + (\mu_+ + 2 \delta_N) \leq \frac{2\delta_N\left(\mu - (\mu_+ + 2 \delta_N)\right)}{(\mu_+ - \mu) + 2 \delta_N} + (\mu_+ + 2 \delta_N)$.
	With an analogue argument we can show that $\langle t, \lambda \P_0 + (1 - \lambda) \P_1 \rangle \geq \mu_-$.
	We can further calculate that with $A := (2\bar{T}+1)\delta_N + \frac{5\delta_N^2}{4}$ and $B := \mu^2 - \mu_+\mu_-$
	\begin{align*}
		\langle t^2 - (\mu_-+\mu_+) t, \lambda \P_0 + (1 - \lambda) \P_1 \rangle\\
		\leq \frac{(\sigma_+ - \mu^2)A}{A + B} + \frac{(\sigma_+ - \mu_+\mu_- + A)B }{A + B}\\
		= \frac{\sigma_+A - \mu^2A + \sigma_+B - \mu_+\mu_- B + AB}{A+B}\\
		= \frac{\sigma_+A - (B + \mu_+\mu_-)A + \sigma_+B - \mu_+\mu_- B + AB}{A+B}\\
		= \frac{\sigma_+A - BA - \mu_+\mu_- A + \sigma_+B - \mu_+\mu_- B + AB}{A+B}\\
		= \frac{(\sigma_+ - \mu_+\mu_-)(A+B)}{A+B}\\
		= \sigma_+ - \mu_+\mu_-,
	\end{align*}
	where the first inequality is again, by observing that $\sigma_+-\mu^2 \leq \sigma_+-\mu_-\mu_+ +A$ implies that the estimate $\lambda \geq \frac{A}{A+B}$ provides an upper bound. We conclude that $\P := \lambda \P_0 + (1 - \lambda) \P_1$ is feasible for the true inner Problem~\eqref{problem:inner}.
	
	\Regie{Wir argumentieren, dass die Lösungswertdifferenz klein ist.}
	We note that
	\begin{equation*}
		\left|\int_T a\mathbbm{1}_{[x^-, x^+]} d\P - \int_T a\mathbbm{1}_{\tilde{I}(a, x^-, x^+)}d\P\right| \leq |a| 3 \delta_N \rho^{max}.	
	\end{equation*}
	
	We can further calculate that the value difference of $\P$ in \eqref{problem:inner} and $w$ in \eqref{problem:safe_approx_primal} can be bounded from above by
	\begin{align*}
		& \left| \int_T a\mathbbm{1}_{[x^-, x^+]} d\P - \sum_{\tau \in T_N} a\mathbbm{1}_{\tilde{I}(a, x^-, x^+)}(\tau) (w_\tau^- + w_\tau^+) \right|\\ 
		& \qquad \leq
		\left| \int_T a\mathbbm{1}_{[x^-, x^+]} d\P - \int_T a\mathbbm{1}_{\tilde{I}(a, x^-, x^+)} d\P \right|\\
		& \qquad + \lambda \left|\int_T a\mathbbm{1}_{\tilde{I}(a, x^-, x^+)} d\P_0\right|\\
		& \qquad + \left| \int_T a\mathbbm{1}_{\tilde{I}(a, x^-, x^+)} d\P_1 - \sum_{\tau \in T_N} a\mathbbm{1}_{\tilde{I}(a, x^-, x^+)}(\tau) (\tilde{w}_\tau^- + \tilde{w}_\tau^+)\right|\\
		& \qquad + \left|\sum_{\tau \in T_N} a\mathbbm{1}_{\tilde{I}(a, x^-, x^+)}(\tau) (\tilde{w}_\tau^- + \tilde{w}_\tau^+) - \sum_{\tau \in T_N} a\mathbbm{1}_{\tilde{I}(a, x^-, x^+)}(\tau) (w_\tau^- + w_\tau^+)\right|\\
		& \qquad + \lambda \left|\sum_{\tau \in T_N} a\mathbbm{1}_{\tilde{I}(a, x^-, x^+)}(\tau) (w_\tau^- + w_\tau^+)\right|\\
		& \qquad \leq |a|3\delta_N\rho^{max} + \lambda|a| + |a|2\rho^{max}\delta_N + |a|\delta_N + \lambda|a|\\
		& \qquad \leq |a|\left((5\rho^{max}+1)\delta_N + 2 \max\left\{ \frac{2\delta_N}{(\mu_+ - \mu) + 2\delta_N}, \frac{(2\bar{T} + 1)\delta_N + \frac{5\delta_N^2}{4}}{(\mu^2 - \mu_+\mu_-) + (2\bar{T}+1)\delta_N + \frac{5\delta_N^2}{4}} \right\}\right).
	\end{align*}
	As $w$ is optimal for Problem~\eqref{problem:safe_approx_primal}, and $\P$ feasible for Problem~\eqref{problem:inner}, this proves the statement.
\end{proof}
Hence, we note that Theorem \ref{thm:inner_convergence} shows that the inner problem~\eqref{problem:inner} can be approximated to any desired accuracy that depends on the value of $\delta_N$.

\section{A tractable safe approximation of the DRO constraint}\label{sec:mip}
We have gathered all the ingredients to provide the a MIP formulation for the safe approximation of Problem~\eqref{problem:basic2}.
To this end, recall that, based on the results in Section \ref{sec:safe_approx}, the following is a safe approximation of Problem \eqref{problem:basic2}:
\begin{subequations}\label{problem:safe_approx_after_sec3}
	\begin{align}
		\max \quad & c(x^-,x^+, \tilde{x})\\ 
		\st\ & \langle \left(1,-1, -\mu_{+}, \mu_{-}, - \sigma_+ +\mu_+\mu_- - \frac{\delta_N^2}{4} \right),y\rangle - \sum_{\tau \in T_N} \bar{\rho}^+_N(\tau) \boundvariable{\tau} \geq b,\label{constr:safe_approx_after_sec3_constr1}\\
		& p_y(\tau) + z_\tau + a\mathbbm{1}_{\tilde{I}(a, x^-, x^+)}(\tau) \geq 0 \text{ for all } \tau \in T_N,\label{constr:safe_approx_after_sec3_constr2}\\
		& p_y(\tau + \delta_N) + z_\tau + a\mathbbm{1}_{\tilde{I}(a, x^-, x^+)}(\tau) \geq 0 \text{ for all } \tau \in T_N \setminus \{M\},\label{constr:safe_approx_after_sec3_constr3}\\
		& (x^-,x^+, \tilde{x}) \in C, y \in \mathbb{R}^{5}_{\ge 0}, z \in \mathbb{R}_{\ge 0}^{T_N}.
	\end{align}
\end{subequations}
Hence, each feasible solution to \eqref{problem:safe_approx_after_sec3} is feasible to \eqref{problem:basic2}.
Furthermore, we can calculate an upper bound on its optimality gap by adapting the right-hand side of \eqref{constr:safe_approx_after_sec3_constr1} according to the bound given by Theorem \ref{thm:inner_convergence}.
We note that \eqref{problem:safe_approx_after_sec3} is MIP-representable if we can replace $\mathbbm{1}_{\tilde{I}(a, x^-, x^+)}(\tau)$ by binary variables $\tilde{b}_\tau$ that encode $\mathbbm{1}_{\tilde{I}(a, x^-, x^+)}(\tau)$ appropriately for all feasible $x^-, x^+ \in T$.

We note that the system
\begin{subequations}\label{28}
	\begin{align}
		\label{28a}& \sum_{\tau\in T_N} \lift{\tau}{-} = 1\\
		\label{28b}& \sum_{\tau\in T_N} \lift{\tau}{+} = 1\\
		\label{28c}& \sum_{\tau\in T_N} \tau \lift{\tau}{-} \leq x^- \leq \sum_{\tau\in T_N} (\tau + \delta_N) \lift{\tau}{-}\\
		\label{28d}& \sum_{\tau\in T_N} \tau \lift{\tau}{+} \leq x^+ \leq \sum_{\tau\in T_N} (\tau + \delta_N) \lift{\tau}{+}\\
		\label{28e}&\Delta^-,\Delta^+ \in \{0,1\}^{T_N}
	\end{align}
\end{subequations}
ascertains that $\Delta^-_\tau = 1$ and $\Delta^+_\tau = 1$ are equivalent to $x^- \in [\tau, \tau + \delta_N]$ and $x^+ \in [\tau, \tau + \delta_N]$ respectively.
Next, we set up two systems that encode $\mathbbm{1}_{\tilde{I}(a, x^-, x^+)}(\tau)$ with binary variables such that $a\tilde{b}_{\tau} \leq a\mathbbm{1}_{\tilde{I}(a, x^-, x^+)}(\tau)$ in order to guarantee that the resulting MIP satisfies \eqref{constr:safe_approx_after_sec3_constr2} and \eqref{constr:safe_approx_after_sec3_constr3} and consequently remains a safe approximation.
Hence, we have to distinguish between $a > 0$ and $a < 0$.
Assuming that $x, \Delta$ fulfills System~\eqref{28}, and $a > 0$, we aim to show that the system
\begin{subequations}\label{29}
	\begin{align}
		\label{29a}& \tilde{b}_{\tau} \leq \sum_{t \in T_N, t \leq \tau - 2\delta_N} \Delta^-_t && \forall \tau\in T_N\\
		\label{29b}& \tilde{b}_{\tau} \leq \sum_{t \in T_N, t \geq \tau + 2\delta_N} \Delta^+_t && \forall \tau\in T_N\\
		& \tilde{b}\in \{0,1\}^{T_N}&&
	\end{align}
\end{subequations}
ascertains that $\tilde{b}_\tau = 0$ whenever $\mathbbm{1}_{\tilde{I}(a, x^-, x^+)}(\tau) = 0$ or equivalently $\tau \notin [x^-+\delta_N,x^+-2\delta_N]$. 
We first analyze \eqref{29a}: It holds that
\begin{equation*}
	\tau < x^- + \delta_N \Leftrightarrow \tau - \delta_N < x^- \Rightarrow \sum_{t \in T_N, t \leq \tau - 2\delta_N} \Delta^-_t = 0 
\end{equation*}
and
\begin{equation*}
	\tau > x^- + \delta_N \Leftrightarrow \tau - \delta_N > x^- \Rightarrow \sum_{t \in T_N, t \leq \tau - 2\delta_N} \Delta^-_t = 1 
\end{equation*}
If $x^-=\tau-\delta_N$, the term $\sum_{t \in T_N, t \leq \tau - 2\delta_N} \Delta^-_t$ can attain either $0$ or $1$, which implies that $\tilde{b}_\tau$ can attain the value $1$ if and only if $\tau \geq x^-+\delta_N$. Similarly, if we analyze \eqref{29b}, it holds that
$\tilde{b}_\tau$ can attain the value $1$ if and only if $\tau \geq x^+ - 2\delta_N$. Hence, $\tilde{b}_\tau$ is forced to $0$ if 
$\tau \notin \tilde{I}(a, x^-, x^+)$ for $a > 0$. 

Similarly, if $a < 0$, we show that the system
\begin{subequations}\label{30}
	\begin{align}
		\label{30a}& \tilde{b}_{\tau} \geq \sum_{t \in T_N, t \leq \tau +\delta_N} \Delta^-_t + \sum_{t \in T_N, t \geq \tau - \delta_N} \Delta^+_t - 1 && \forall \tau\in T_N\\
		& \tilde{b} \in \{0,1\}^{T_N}&&
	\end{align}
\end{subequations}
ascertains that $\tilde{b}_\tau$ is forced to $1$ if and only if $\mathbbm{1}_{\tilde{I}(a, x^-, x^+)}(\tau) = 1$, 
i.e., if $\tau \in \tilde{I}(a, x^-, x^+) = (x^- -2\delta_N,x^+ +\delta_N)$. 
It holds that
\begin{equation*}
	\tau > x^- - 2\delta_N \Leftrightarrow \tau + 2\delta_N > x^- \Rightarrow \sum_{t \in T_N, t \leq \tau + \delta_N} \Delta^-_t = 1, 	
\end{equation*}
and
\begin{equation*}
	\tau < x^- - 2\delta_N \Leftrightarrow \tau + 2\delta_N < x^- \Rightarrow \sum_{t \in T_N, t \leq \tau + \delta_N} \Delta^-_t = 0, 	
\end{equation*}
as well as both, $0$ and $1$ being attainable for $\sum_{t \in T_N, t \leq \tau + \delta_N} \Delta^-_t$ if $\tau = x^- - 2\delta_N$.
Again, with similar arguments, one can conclude that $\sum_{t \in T_N, t \geq \tau - \delta_N} \Delta^+_t$ can attain the value $0$ if and only if $\tau \geq x^+ + \delta_N$.
Hence, $\tilde{b}_\tau$ is forced to $1$ if and only if $\tau \in \tilde{I}(a, x^-, x^+)$ for $a < 0$.

\begin{remark}
	We would like to point out that in order to enhance numerical efficiency, it is possible to significantly decrease the number of non-zeros in Systems \eqref{29} and \eqref{30} by the equivalent systems
	\begin{subequations}\label{29_alt}
		\begin{align}
			\label{29a_alt}&\tilde{b}_0 = \tilde{b}_{\delta_N} = 0, \tilde{b}_\tau \leq \tilde{b}_{\tau - \delta_N} + \Delta^-_{\tau - 2 \delta_N} \text{ for all } \tau \geq 2 \delta_N,\\
			\label{29b_alt}&\tilde{b}_{M} = \tilde{b}_{M - \delta_N} = 0, \tilde{b}_\tau \leq \tilde{b}_{\tau + \delta_N} + \Delta^+_{\tau + 2 \delta_N} \text{ for all } \tau \leq M - 2 \delta_N. 
		\end{align}
	\end{subequations}
	and 
	\begin{subequations}\label{30_alt}
		\begin{align}
			\label{30a_alt}\tilde{b}^-_{0} \geq \Delta^-_0 + \Delta^-_{\delta_N},~\tilde{b}^-_{\tau} \geq \tilde{b}^-_{\tau - \delta_N} + \Delta^-_{\tau + \delta_N} \text{ for all }\tau \geq \delta_N\\
			\tilde{b}^+_{M} \geq \Delta^+_M + \Delta^+_{M - \delta_N},~\tilde{b}^+_{\tau} \geq \tilde{b}^+_{\tau + \delta_N} + \Delta^+_{\tau - \delta_N} \text{ for all }\tau \leq M - \delta_N\\
			\tilde{b}_\tau \geq \tilde{b}^-_\tau + \tilde{b}^+_\tau - 1\text{ for all }\tau \in T_N\\
			\tilde{b}^-, \tilde{b}^+ \in \{0, 1\}^{T_N}
		\end{align}
	\end{subequations}
	respectively. However, in the latter case this comes at the expense of introducing additional binary variables. 
\end{remark}
With the help of Systems~\eqref{28}, \eqref{29_alt} and \eqref{30_alt}, we formulate our safe approximation of Problem~\eqref{problem:basic2} with MIP techniques. For ease of notation, we denote an equation by a superscript $(\cdot)^s$, if it is expressed for a certain $s\in S$. The model is the content of the next theorem.

\begin{theorem}\label{Thm: MIP_onedim_safe}
	The feasible set of the optimization problem
	\begin{subequations}\label{Prob: MIP_onedim}
		\begin{align}
			\sup_{}~ & c(x^-,x^+, \tilde{x}) \\
			\text{s.t.}~ & \sum_{s \in S} \Big( \langle \left(1,-1, -\mu_+^s, \mu_-^s,-\sigma_+^s + \mu_+^s \mu_-^s - \frac{(\delta_N^s)^2}{4} \right),y^s\rangle &&\hspace{-.3cm}- \sum_{\tau\in T_N^s} (\bar{\rho}^+_N)^s(\tau) \boundvariable{\tau}^s \Big) \geq b \label{Constr: discretized_dual_objective_geq0}\\
			& a^s \tilde{b}_{\tau}^s + z_{\tau}^s + p^s_{y^s}(\tau) \geq 0 && \forall \tau\in T_N^s, s\in S \label{Constr: discretized_purity_strengthened}\\
			& a^s \tilde{b}_{\tau}^s +z_{\tau}^s +p^s_{y^s}(\tau+\delta_N^s) \geq 0 &&\forall \tau \in T_N^s\setminus\{M^s\}, s\in S \label{Constr: discretized_purity_strengthened_shifted}\\
			\label{27e}&\eqref{28}^s&&\forall s \in S\\
			&\eqref{29_alt}^s&&\forall s \in S\colon a^s > 0\\
			\label{27g}&\eqref{30_alt}^s&&\forall s \in S\colon a^s < 0\\
			& (x^-,x^+, \tilde{x}) \in C \label{Constr: x^-_x^+_in_P}\\
			& y^s \in \mathbb{R}^{5}_{\ge 0}, z^s \in \mathbb{R}_{\ge 0}^{T_N^s} && \forall s \in S. \label{Constr: MIP_onedim_nonneg_yz}
		\end{align}
	\end{subequations}
	projected to $x := (x^-, x^+, \tilde{x})$ is a subset of the feasible set of Problem~\eqref{problem:basic2}, and thus \eqref{Prob: MIP_onedim} is a safe approximation of \eqref{problem:basic2}.
\end{theorem}

\begin{remark}
	It is worth noting that the tractability of the above problem depends on assumptions on the objective function $c$ as well as $C$. In particular, if we assume a linear objective function as well as $C$ being MIP representable, then \eqref{Prob: MIP_onedim} can be solved by modern MIP solvers. For convex MINLP problems such as MISDP or MIQP that contain distributionally robust constraints similar to \eqref{problem:inner} might still be in reach for state-of-the-art MINLP solvers if reformulated via Theorem \ref{Thm: MIP_onedim_safe}.
\end{remark}

\begin{proof}
	We consider $(x, \tilde{b}, \Delta, y, z)$ feasible for \eqref{Prob: MIP_onedim}, and show that $x$ is feasible for Problem~\eqref{problem:basic2}. As the objective functions of the two problems are identical, this implies directly that the prior is a safe approximation of the latter.
	
	As $(x, \tilde{b}, \Delta)$ fulfills Constraints~\eqref{27e} to \eqref{27g}, we know that $a^s\tilde{b}^s(\tau) \leq a^s\mathbbm{1}_{I(a, x^-, x^+)}(\tau)$ for all $\tau \in T_N^s,~s\in S$.
	Hence, $(x, y^s, z^s)$ is feasible for Problem~$\eqref{Prob: Dual_Purity_Constraint_ref}^s$.
	This implies that the expression
	\begin{equation*}
		\langle \left(1,-1, -\mu_+^s, \mu_-^s,-\sigma_+^s + \mu_+^s \mu_-^s - \frac{(\delta_N^s)^2}{4} \right),y^s\rangle - \sum_{\tau\in T_N^s} (\bar{\rho}^+_N)^s(\tau) \boundvariable{\tau}^s
	\end{equation*}
	underestimates the optimal value of the inner problem $\eqref{problem:inner}^s$ for all $s\in S$.
	This suffices to prove that $x$ is feasible for Problem~\eqref{problem:basic2}, as the sum of the optimal values of the inner problems over $S$ is bounded from below by $b$.
\end{proof}

\begin{remark}\label{remark:unique_binarification}
  We note that we do not have to set up all Systems~\eqref{28}, \eqref{29_alt}, \eqref{30_alt} for all $s \in S$ if the discretization grids $T_N^s$ are identical for some $s_1, s_2 \in S$ and $\sign(a^{s_1}) = \sign(a^{s_2})$. If $T_N^{s_1} = T_N^{s_2}$, we can omit System~$\eqref{28}^{s_2}$, using the $\Delta^{s_1}$ variables in $\eqref{29_alt}^{s_2}$ or $\eqref{30_alt}^{s_2}$ instead of $\Delta^{s_2}$. Further, we can omit System~$\eqref{29_alt}^{s_2}$/System~$\eqref{30_alt}^{s_2}$, using the $\tilde{b}^{s_1}$ variables in Constraints~\eqref{Constr: discretized_purity_strengthened} and \eqref{Constr: discretized_purity_strengthened_shifted} instead of $\tilde{b}^{s_2}$. In the extreme case where $T_N^s = T_N$ for all $s\in S$, we need the Systems~\eqref{28}, \eqref{29_alt} and \eqref{30_alt} at most twice.
  More generally, depending on the application, it may also be possible to exploit some problem characteristics, for example additional constraints on $x^+$ or $x^-$, to decrease the size of the MIP model further.   
\end{remark}

\begin{remark}
	We obtain an upper bound on the value of Problem~\eqref{problem:basic2} by adapting the right-hand side of \eqref{Constr: discretized_dual_objective_geq0} appropriately, exploiting the bounds we obtained in Theorem~\ref{thm:inner_convergence}.
\end{remark}

In the next section, we demonstrate the computational performance of our approach.

\section{Computational Results}
\label{Sec:comp-results}
To evaluate the introduced reformulation approaches, we test them on a
prototypical application in the setup of material design processes.
Our code is publicly available \cite{Dienstbier2025SafeMIPDR}.

\subsection{Optimal fractionation for chromatographic columns}
A
fundamental and simultaneously challenging task in this active research field consists in the separation of a mixture of substances into its individual components, characterized by different
criteria. In this process, the
particle mixture flows along a so-called chromatographic column with
material-dependent velocities. Loosely speaking, while flowing along the column,
different materials can be separated. At the end of the process the concentration over time of each particle is detected and documented in the \emph{chromatogram}.
The application considered here consists in determining points in time when to collect
the materials leaving the column. It is then crucial that the collected material satisfies certain quality requirements. This collection process of separated materials is called \emph{fractionation}. 
In this particular application, we consider Polyethylenglykol
(PEG) molecules that shall be separated
with respect to their \emph{degree of polymerization} $s$, i.e., the (discrete) number $s$ of
monomeric units. Quality requirement then state that at least a certain
fraction of the separated material needs to have some specified degree. 

Chromatographic processes are prone to uncertainties that already in
very simplified settings
can impact the separation results negatively. In particular, the \emph{residence time distributions} (RTDs), which distribute the time a PEG needs to pass the column may be uncertain themselves. In order to maintain quality
requirements even under uncertainty, robust
protection is sought for which not yet general approaches are available. 

In our example, we use realistic settings from \cite{Supper2023a}. We denote $S$ as the set of all polymerization degrees within the injection and $S_w \subseteq S$ as the set of the polymerization degrees of the desired particles.
The uncertain RTDs are assumed to be (truncated) normal distributions, which is a standard assumption in practice, see e.g., \cite{supper2022separation}. Each distribution describes the degree of polymerization for one $s\in S$. We naturally assume that the mean $\mu_s$ and the variance $\varianz{s}$ are uncertain.

Assuming a mixture of different PEGs, the aim is to set up
the separation such that as much share as possible is collected from one
desired PEG size. Thus, we need to find the time interval, i.e., a point in time $x^-$ where
to start and a point in time $x^+ > x^-$ where to end
fractionation. On the one hand, we wish to collect as much of the
desired PEG material. This amount is determined by the area under its
concentration distribution in the resulting chromatogram. As this area is strongly correlated with the quantity $x^+ - x^-$, we determine $x^-, x^+$ such that this difference is maximized, i.e., we aim at fractionating as long as possible. 

On the other hand, quality requirements on the
endproduct need to be met. To this end, it is required that the percentage of the
desired PEG in the end product does not fall below a given bound. This means that
we require a purity of at least some predefined value $R\geq 0$.
Let $q_0$ denote the initial \emph{particle size density} (PSD),
i.e., the PSD of the PEGs before entering the chromatograph. Here, we assume that the mixture is inserted into the chromatographic column via Dirac impulse, i.e., the entire mixture is inserted instantaneously. Now with $\u_s$ being the ambiguity set containing the plausible probability measures given by the RTD of polymerization degree $s$, the purity constraint for $x^+ - x^- > 0$ reads
\begin{equation*}
	R \leq \min_{\P_s \in \u_s \forall s\in S} \frac{\sum_{s \in S_w} q_0(s) \P_s([x^-, x^+]) }{\sum_{s \in S} q_0(s) \P_s\left([x^-, x^+]\right)}.
\end{equation*}
Note that this is equivalent to
\begin{equation*}
	R \leq \frac{\sum_{s \in S_w} q_0(s) \min_{\P \in \u_s} \P\left([x^-, x^+]\right) }{\sum_{s \in S_w} q_0(s) \min_{\P \in \u_s} \P\left([x^-, x^+]\right) + \sum_{s \in S \setminus S_w} q_0(s) \max_{\P \in \u_s} \P\left([x^-, x^+]\right)}
\end{equation*}
and by subtracting the left hand side of the inequality and multiplying with the denominator and, the constraint can be reformulated to
\begin{equation*}
	0 \leq \sum_{s\in S} \min_{\P\in \u_s} \left(\mathbbm{1}_{S_w}(s) - R\right)\initial{s}\P\left([x^-, x^+]\right).
\end{equation*}
Hence, the purity constraint matches with the constraint we have introduced in Section \ref{Sec: Problem Setting}. In formulas, we aim to solve:
\begin{subequations}\label{Prob: DRO_chromatography}
	\begin{align}
		\max_{x^-, x^+ \in T} &~x^+-x^-\\
		& \st\ 0 \leq \sum_{s\in S} \min_{\P\in \u_s}\left(\mathbbm{1}_{S_w}(s) - R\right)\initial{s}\P\left([x^-, x^+]\right) \label{Constr: purity_primal}
	\end{align}
\end{subequations}
with $x^-$ being the start, $x^+$ being the end of the fractionation. We assume that all ambiguity sets $\u_s$ are defined in accordance to Section~\ref{sec:ambiguity_set}.
Setting $a^s := \left(\mathbbm{1}_{S_w}(s) - R\right)\initial{s}$ for all $s \in S$, the problem has the desired structure and we can apply our framework, i.e., we can solve a safe approximation of the form \eqref{Prob: MIP_onedim}.

\subsection{Application to chomatography with realistic data from chemical engineering}
\Regie{Setting und Testdatenspezifikation}
For our example we used process and optimization
parameters that are typical when working with PEGs. In the following, the most important ones are explained first. Depending on these, the nominal mean, as well as its minimum and maximum deviation can then be calculated.
For the nominal values, we start by noting that 
a \emph{solvent} is inserted together with the
mixture that transports the latter through the column. In our case,
this is \emph{Acetonitrile} (ACN), where we denote its ratio by $r_\text{ACN}$.
The ACN ratio
impacts the so-called retention time that the PEGs need in general to flow through the column. 
The so-called number of theoretical plates $NTP$ is a quantitative measure
of the separation efficiency of a chromatographic column. It influences the peak width through $\varianz{s} = \sqrt{\frac{\mu_s^2}{NTP}}$. Now, using the numbers 
from Table \ref{Tab: parameter} below, the mean $\mu_s$ and thus the standard deviation for each PEG can be determined using \cite{Supper2023a}.
\begin{table}[H]
\caption{Process and optimization parameters for example chromatogram}
    \centering
    \begin{tabular}{c|c}
        term  & value \\
        \hline 
        \hline
        degrees of polymerization of the desired PEGs $s \in S_w$ & 32 \\
        \hline
        degrees of polymerization of all PEGs within the injection $s \in S$ & 30,31,32,33 \\
        \hline
        required purity $R$ & 0.95 \\
        \hline
        number of theoretical plates $NTP$ & 120.000 \\
        \hline
        ACN-ratio  $r_\text{ACN}$ & 0.25 \\
        \hline
    \end{tabular}
    \label{Tab: parameter}
\end{table}
In practice, the ACN-ratio is uncertain because of the imprecise pump
for injecting ACN. 
We consider realistic uncertainties $r_\text{ACN} \in
[0.25 -\varepsilon_{\text{ACN}},0.25+\varepsilon_{\text{ACN}}]$ 
with $\varepsilon_{\text{ACN}} = 0.0042$ as this choice of parameters illustrates the effects of the safe approximation with regards to the presented application fairly well. 
The uncertainty on $r_\text{ACN}$ leads to uncertainty in the
mean $\mu_s$. 
Again with \cite{Supper2023a}, the corresponding means $\mu_s(r_{\text{ACN}})$ are calculated and result in the \emph{maximum and minimum retention
times} $\mu_{s,-}, \mu_{s,+}$, which are displayed in Table \ref{tab:inputvalues}.


\begin{table}[H]
\caption{(Bounds on) retention times in minutes calculated via \cite{Supper2023a}, using parameters in Table~\ref{Tab: parameter}.} 
    \centering
    \begin{tabular}{|r||r||r r|}\hline
    $s$ & $\mu_s$
    &$\mu_{s,-}$ & $\mu_{s,+}$
    \\
    \hline
    30 & 2.93 
    & 2.865 & 2.998  
    \\
    31 & 3.10  
    & 3.030 & 3.179  
    \\ 
    32 & 3.29  
    & 3.209 & 3.377  
    \\
    33 & 3.50  
    & 3.403 & 3.593  
    \\
\hline
    \end{tabular}
    \label{tab:inputvalues}
\end{table}

Furthermore, we choose $\sigma_s^+$ as the maximum of the three standard deviations $\sigma_s(r_{\text{ACN}})=\sqrt{\frac{\mu_s(r_{\text{ACN}})^2}{NTP}}$ given by $r_{\text{ACN}}=0.25 -\varepsilon_{\text{ACN}},0.25,0.25+\varepsilon_{\text{ACN}}$. The corresponding envelope is defined by 
\begin{equation*}
	\bar{\rho}^s(t)=\begin{cases}
		\rho_{\mu_{s,-}}(t) & \text{if } t \leq \mu_{s,-}\\
		\rho_{\mu_{s,+}}(t) & \text{if } t \geq \mu_{s,+}\\
		\max_{r_{\text{ACN}} \in \{0.25 -\varepsilon_{\text{ACN}},0.25,0.25+\varepsilon_{\text{ACN}}\}} \rho_{\mu_{s}(r_{\text{ACN}})} (\mu_{s}(r_{\text{ACN}})) & \text{otherwise,}
	\end{cases}
\end{equation*}
where $\rho_{\mu_{s}(r_{\text{ACN}})}$ denotes the density of the truncated normal distribution determined by $r_{\text{ACN}}$.

%


We solved the safe approximation of Problem \eqref{Prob: DRO_chromatography} of the form \eqref{Prob: MIP_onedim} using Gurobi version 12.0.0 on a standard notebook.
The number of variables and constraints depends strongly on the value of $\delta$ as shown for four examples
in Table~\ref{Tab:runtime}. For our application, a discretization of
$\delta_N = 0.0001$ minutes is appropriate.
We observe that the running times are less than one minute for all runs. Thus, the calculation of robust fractionation times is tractable in practice, even though non-convex indicator functions are involved.
\begin{table}[H]
\caption{Running times for solving the safe approximation of \eqref{Prob: DRO_chromatography} with different $\delta_N$.}
    \centering
    \begin{tabular}{|r|r|r|r|}\hline
        $\delta_N$ (min) & number of variables & number of constraints  & CPU time(sec) \\
        \hline 
        0.01 & 763 & 1064 & <1s\\
        0.005 & 1510 & 2143 & <1s\\
        0.002 & 3733 & 5354 & <1s\\
        0.001 & 7459 & 10736 & 2.32s\\
        0.0005 & 14893 & 21474 & 7.82 \\
        0.0002 & 37204 & 53701 & 12.4 \\
        0.0001 & 74401 & 107430 & 53.6s \\
        \hline
    \end{tabular}
    \label{Tab:runtime}
\end{table}

Next, we compare the obtained robust fractionation times with that of the nominal fractionation. 

In Figure \ref{fig:chromatograms_uncertainty}, the y-axis of a chromatogram shows its output signal (an electric impulse) that is proportional to the particle concentration.
Each of the four gray or red sharp peaks corresponds to the nominal PSD of one PEG after leaving the column. The narrow red peak is the nominal PSD of the desired PEG.
Under the considered uncertainty sets, the peaks lie below the envelope functions (black or red boxes).
\begin{figure}[ht]
	\centering
	
	\begin{subfigure}[t]{0.48\textwidth}
		\centering
		\includegraphics[width=\textwidth]{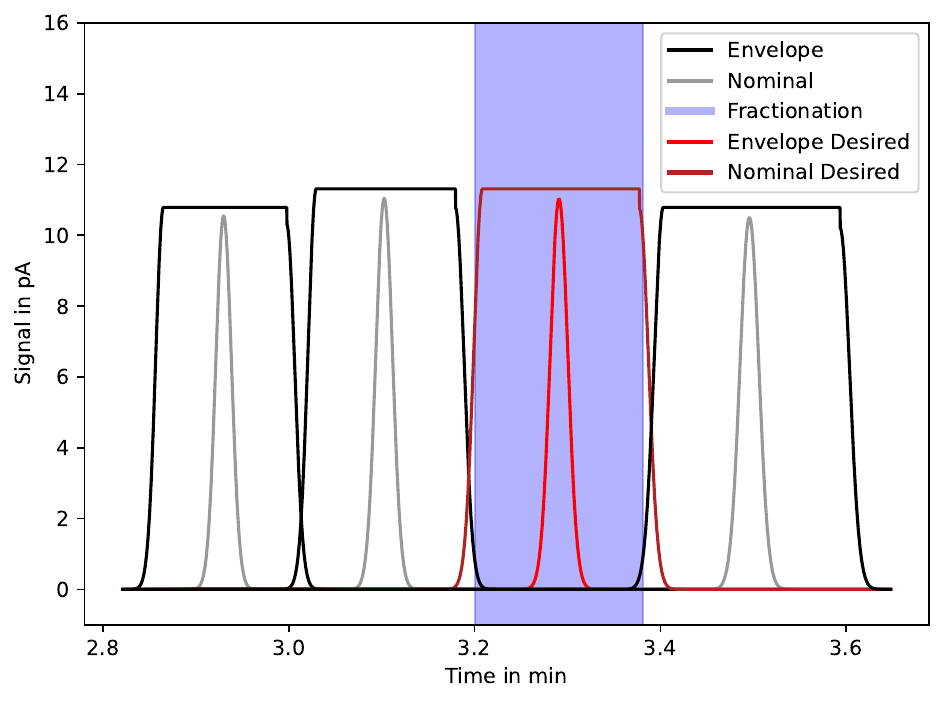}
		\caption{%
			Robust Chromatogram with fractionation interval $[3.2009, 3.3813]$ and a worst-case purity of $95.001\%$.
		}
		\label{fig:no_improvement_too_big}
	\end{subfigure}
	\hfill
	\begin{subfigure}[t]{0.48\textwidth}
		\centering
		\includegraphics[width=\textwidth]{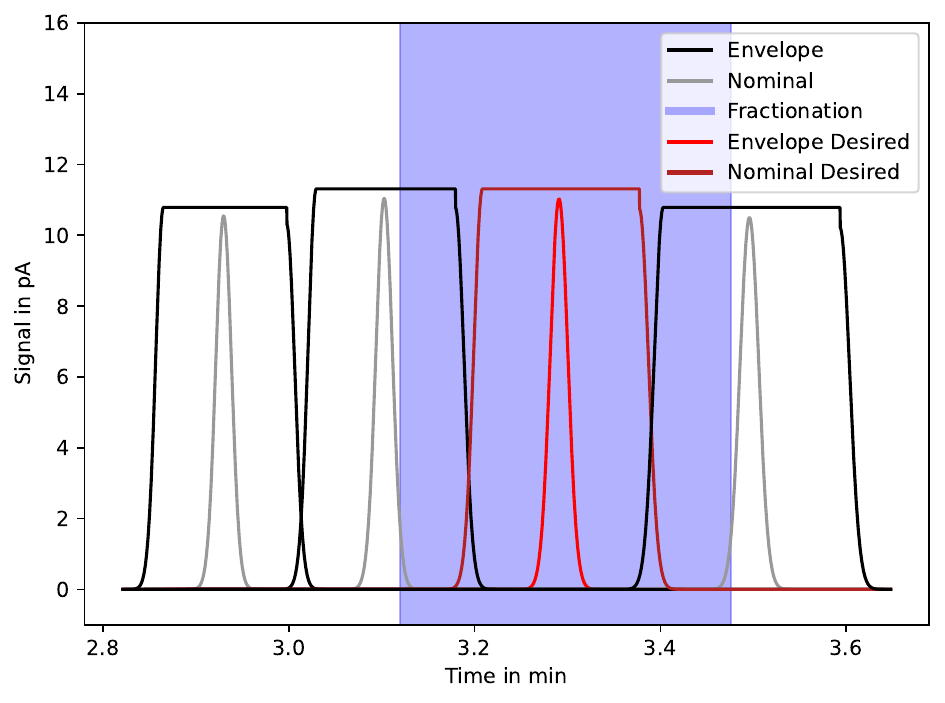}
		\caption{%
			Nominal Chromatogram with fractionation interval $[3.1198, 3.4760]$ and a worst-case purity of $33.8\%$.
		}
		\label{fig:no_improvement_too_small}
	\end{subfigure}
	
	\caption{Nominal and Robust Chromatograms.}
	\label{fig:chromatograms_uncertainty}
\end{figure}
Depending on the realization of the uncertainty the peaks can overlap. The blue areas show the fractionation intervals.

\Regie{Beschreiben der Ergebnisse}
It turns out that for these uncertainties, robust protection indeed influences the fractionation time interval. The nominal fractionation interval is about twice the length of that of the robust fractionation interval, so that in the robust setting less end product is obtained.

However, the obtained product purities also need to be taken into account. 
We investigate the purities of the end product, when both applying nominal as well as  robust fractionation times, applied in the uncertain setting. 
We observe that the purity of the nominal solution can drop down to only $33.8\%$, which is far below the target purity of $95\%$.
Hence, implementing the nominal fractionation interval leads to unusable end product so that everything needs to be discarded.
Comparing this result with robust fractionation times, the end product has a worst-case purity of $95.001\%$, so that the end product satisfies quality requirements also under realistic uncertainties and can certainly be used.

\section{Conclusion}
\label{Sec: Conclusion}

In this paper, we have presented a novel approach for safely approximating distributionally robust optimization problems where non-convex one-dimensional indicator functions are allowed.
We have shown that a suitably discretized formulation yields safe approximation that is a mixed-integer linear program under mild assumptions.
For moment-constrained ambiguity sets, we have proven that the reformulation of the inner problem converges to the true inner problem, which shows the high quality of the safe approximation.
The fact that indicator functions can be included in the model pushes the applicability of duality-based reformulations of distributional robustness significantly beyond convexity. 

In future work, one might build on the present work on indicator functions to consider DRO constraints that allow more complex nonlinear interactions $v(x,t)$ between the decision and random variables. Moreover, the definition of ambiguity sets could be further expanded by incorporating additional classes of constraints. Nevertheless, the computational results presented in this paper demonstrate that the formulation introduced here already yields strong results of high practical relevance.

\section*{Acknowledgments}
\label{sec:acknowledgements}
We thank Dominique Fahrnbach for his intensive work on a
Wasserstein-based approach as part of his
Master's thesis. We thank
Malte Kaspereit and Malvina Supper for many
inspiring discussions on particle separation processes, in particular
for handing over the parameter values for Tables \ref{Tab:
  parameter} and the formulas for calculating the numbers in Table \ref{tab:inputvalues}. 
%
The paper is funded by the Deutsche Forschungsgemeinschaft (DFG, German Research Foundation) - Project-ID 416229255 - SFB 1411.



\printbibliography

@software{Dienstbier2025SafeMIPDR,
	author       = {Dienstbier, Johannes and
	Liers, Frauke and
	Rolfes, Jonas and
	Rösel, Florian},
	title        = {Safe Mixed-Integer Approximation for Non-Convex Distributional Robustness with Univariate Indicator Functions},
	year         = {2026},
	version      = {0.1.1},
	publisher    = {Zenodo},
	doi          = {https://doi.org/10.5281/zenodo.18290700},
}

@book{Barvinok2002a,
	title={A Course in Convexity},
	author={Barvinok, A.},
	isbn={9780821829684},
	lccn={2002028208},
	series={Graduate studies in mathematics},
	url={https://books.google.de/books?isbn=9780821829684},
	year={2002},
	publisher={American Mathematical Society}
}

@book{Brezis2010a,
	title={Functional Analysis, Sobolev Spaces and Partial Differential Equations},
	author={Brezis, H.},
	isbn={9780387709147},
	lccn={2010938382},
	series={Universitext},
	url={https://books.google.de/books?isbn=9780387709147},
	year={2010},
	publisher={Springer New York}
}

@article{Delage2010a,
	title={Distributionally Robust Optimization Under Moment Uncertainty with Application to Data-Driven Problems},
	author={Erick Delage and Yinyu Ye},
	journal={Oper. Res.},
	year={2010},
	volume={58},
	pages={595-612},
	doi={https://doi.org/10.1287/opre.1090.0741}
}

@article{rolfes25,
	title={A Positive Semidefinite Safe Approximation of Multivariate Distributionally Robust Constraints Determined by Simple Functions},
	author={Dienstbier, Jana and Liers, Frauke and Rolfes, Jan},
	journal={Journal of Optimization Theory and Applications},
	volume={208},
	number={1},
	pages={1--27},
	year={2026},
	publisher={Springer},
	doi={https://doi.org/10.1007/s10957-025-02791-5}
}

@book{Dowd2002a,
	title={Measuring market risk},
	author={Dowd, Kevin},
	year={2007},
	publisher={John Wiley \& Sons},
	isbn={9780470016510},
	url={https://books.google.de/books?isbn=9780470016510}
}

@articleInfo{Fengmin2021a,
	title = {Distributionally Robust Optimization: A review on theory and applications},
	journal = {Numerical Algebra, Control and Optimization},
	volume = {12},number = {1},pages = {159-212},
	year = {2022},
	issn = {2155-3289},
	doi = {https://doi.org/10.3934/naco.2021057},
	author = {Fengming Lin and Xiaolei Fang and Zheming Gao},
	keywords = {Distributionally robust optimization, uncertain decision-making, tractable methods, machine learning, operations research}
}

@article{kuchlbauer2022adaptive,
	author = {Kuchlbauer, Martina and Liers, Frauke and Stingl, Michael},
	title = {Adaptive Bundle Methods for Nonlinear Robust Optimization},
	journal = {INFORMS Journal on Computing},
	year = {2022},
	doi = {https://doi.org/10.1287/ijoc.2021.1122},
	volume = {34}, 
	number = {4}, 
	pages = {2106-2124}
}

@article{kuchlbauerOuter,
	title={Outer approximation for mixed-integer nonlinear robust optimization},
	author={Kuchlbauer, Martina and Liers, Frauke and Stingl, Michael},
	journal={Journal of Optimization Theory and Applications},
	volume={195},
	number={3},
	pages={1056--1086},
	year={2022},
	publisher={Springer},
	doi={https://doi.org/10.1007/s10957-022-02114-y}
}

@Article{Parys2015a,
	title={Generalized Gauss inequalities via semidefinite programming},
	author={Van Parys, Bart PG and Goulart, Paul J and Kuhn, Daniel},
	journal={Mathematical Programming},
	volume={156},
	number={1},
	pages={271--302},
	year={2016},
	publisher={Springer},
	doi={https://doi.org/10.1007/s10107-015-0878-1}
}

@article{Popescu2007a,
	author = {Popescu, Ioana},
	title = {Robust Mean-Covariance Solutions for Stochastic Optimization},
	journal = {Operations Research},
	volume = {55},
	number = {1},
	pages = {98-112},
	year = {2007},
	doi = {https://doi.org/10.1287/opre.1060.0353},
	abstract = { We provide a method for deriving robust solutions to certain stochastic optimization problems, based on mean-covariance information about the distributions underlying the uncertain vector of returns. We prove that for a general class of objective functions, the robust solutions amount to solving a certain deterministic parametric quadratic program. We first prove a general projection property for multivariate distributions with given means and covariances, which reduces our problem to optimizing a univariate mean-variance robust objective. This allows us to use known univariate results in the multidimensional setting, and to add new results in this direction. In particular, we characterize a general class of objective functions (the so-called one- or two-point support functions), for which the robust objective is reduced to a deterministic optimization problem in one variable. Finally, we adapt a result from Geoffrion (1967a) to reduce the main problem to a parametric quadratic program. In particular, our results are true for increasing concave utilities with convex or concave-convex derivatives. Closed-form solutions are obtained for special discontinuous criteria, motivated by bonus- and commission-based incentive schemes for portfolio management. We also investigate a multiproduct pricing application, which motivates extensions of our results for the case of nonnegative and decision-dependent returns. }
}

@article{Rahimian2019a,
	title={Distributionally Robust Optimization: A Review},
	author={Hamed Rahimian and Sanjay Mehrotra},
	journal={arXiv preprint 1908.05659},
	year={2019},
	doi={https://doi.org/10.48550/arXiv.1908.05659}
}

@article{Shapiro2000a,
	title={On duality theory of conic linear problems},
	author={Shapiro, Alexander},
	journal={Nonconvex Optimization and its Applications},
	volume={57},
	pages={135--155},
	year={2001},
	publisher={Springer},
	doi={https://doi.org/10.1007/978-1-4757-3403-4_7}
}

@article{Shapiro2009a,
	title={Semi-infinite programming, duality, discretization and optimality conditions},
	author={Shapiro, Alexander},
	journal={Optimization},
	volume={58},
	number={2},
	pages={133--161},
	year={2009},
	publisher={Taylor and Francis}
}

@article{Wiesemann2014a,
	title={Distributionally robust convex optimization},
	author={Wiesemann, Wolfram and Kuhn, Daniel and Sim, Melvyn},
	journal={Operations research},
	volume={62},
	number={6},
	pages={1358--1376},
	year={2014},
	publisher={INFORMS},
	doi={https://doi.org/10.1287/opre.2014.1314}
}

@article{Xu2017a,
	author = {Xu, Huifu and Liu, Yongchao and Sun, Hailin},
	year = {2017},
	month = {04},
	pages = {1-41},
	title = {Distributionally robust optimization with matrix moment constraints: Lagrange duality and cutting plane methods},
	volume = {169},
	journal = {Mathematical Programming},
	doi = {https://doi.org/10.1007/s10107-017-1143-6}
}

@incollection{prekopa1998SO,
  title={Programming under probabilistic constraint with discrete random variable},
  author={Prekopa, Andras and Vizvari, Bela and Badics, Tamas},
  booktitle={New trends in mathematical programming},
  pages={235--255},
  year={1998},
  publisher={Springer},
  doi={https://doi.org/10.1007/978-1-4757-2878-1}
}

@book{birge2006introduction,
	title={Introduction to Stochastic Programming},
	author={Birge, J.R. and Louveaux, F.},
	isbn={9780387226187},
	lccn={97006931},
	series={Springer Series in Operations Research and Financial Engineering},
	url={https://books.google.de/books?isbn=9780387226187},
	year={2006},
	publisher={Springer New York}
}

@article{mohajerin2018data,
  title={Data-driven distributionally robust optimization using the Wasserstein metric: Performance guarantees and tractable reformulations},
  author={Mohajerin Esfahani, Peyman and Kuhn, Daniel},
  journal={Mathematical Programming},
  volume={171},
  number={1},
  pages={115--166},
  year={2018},
  publisher={Springer},
  doi={https://doi.org/10.1007/s10107-017-1172-1}
}

@article{ghaoui2003worst,
  title={Worst-case value-at-risk and robust portfolio optimization: A conic programming approach},
  author={Ghaoui, Laurent El and Oks, Maksim and Oustry, Francois},
  journal={Operations research},
  volume={51},
  number={4},
  pages={543--556},
  year={2003},
  publisher={INFORMS},
  doi={https://doi.org/10.1287/opre.51.4.543.16101}
}

@article{soyster1973convex,
  title={Convex programming with set-inclusive constraints and applications to inexact linear programming},
  author={Soyster, Allen L},
  journal={Operations research},
  volume={21},
  number={5},
  pages={1154--1157},
  year={1973},
  publisher={INFORMS},
  doi={https://doi.org/10.1287/opre.21.5.1154}
}

@article{ben1998robust,
  title={Robust convex optimization},
  author={Ben-Tal, Aharon and Nemirovski, Arkadi},
  journal={Mathematics of operations research},
  volume={23},
  number={4},
  pages={769--805},
  year={1998},
  publisher={INFORMS},
  doi={https://doi.org/10.1287/moor.23.4.769}
}

@article{ben1999robust,
  title={Robust solutions of uncertain linear programs},
  author={Ben-Tal, Aharon and Nemirovski, Arkadi},
  journal={Operations research letters},
  volume={25},
  number={1},
  pages={1--13},
  year={1999},
  publisher={Elsevier},
  doi={https://doi.org/10.1016/S0167-6377(99)00016-4}
}

@article{ben2000robust,
  title={Robust solutions of linear programming problems contaminated with uncertain data},
  author={Ben-Tal, Aharon and Nemirovski, Arkadi},
  journal={Mathematical programming},
  volume={88},
  number={3},
  pages={411--424},
  year={2000},
  publisher={Springer},
  doi={https://doi.org/10.1007/PL00011380}
}

@article{supper2022separation,
  title={Separation of Molar Weight-Distributed Polyethylene Glycols by Reversed-Phase Chromatography—Analysis and Modeling Based on Isocratic Analytical-Scale Investigations},
  author={Supper, Malvina and Heller, Kathleen and S{\"o}llner, Jakob and Sainio, Tuomo and Kaspereit, Malte},
  journal={Processes},
  volume={10},
  number={11},
  pages={2160},
  year={2022},
  publisher={MDPI},
  doi={https://doi.org/10.3390/pr10112160}
}

@article{Supper2023a,
	AUTHOR = {Supper, Malvina and Jost, Rosanna and Bornschein, Benedikt and Kaspereit, Malte},
	TITLE = {Separation of Molar Weight-Distributed Polyethylene Glycols by Reversed-Phase Chromatography II. Preparative Isolation of Pure Single Homologs},
	JOURNAL = {Processes},
	VOLUME = {11},
	YEAR = {2023},
	NUMBER = {3},
	ARTICLE-NUMBER = {946},
	ISSN = {2227-9717},
	ABSTRACT = {The isolation of single homologs of polyethylene glycol by preparative reversed-phase chromatography is investigated. A thermodynamic model developed accurately previously describes the retention times of individual homologs as function of their size, temperature, and mobile phase composition under linear, diluted conditions. The model is extended to predict limiting retention times for linear gradient operation in preparative applications. Isocratic and gradient-based separations are studied under strongly overloaded conditions. Baseline separation of homologs up to 3000 g/mol is demonstrated. Quantitative production of pure single homologs up to molar weights of 1000 g/mol was performed using an automated setup.},
	doi={https://doi.org/10.3390/pr11030946}
}

@article{Luo2023,
title = {Wasserstein distributionally robust chance-constrained program with moment information},
journal = {Computers \& Operations Research},
volume = {152},
pages = {106150},
year = {2023},
issn = {0305-0548},
doi = {https://doi.org/10.1016/j.cor.2023.106150},
author = {Zunhao Luo and Yunqiang Yin and Dujuan Wang and T.C.E Cheng and Chin-Chia Wu},
keywords = {Distributionally robust optimization, Moment-based ambiguity set, Wasserstein ambiguity set, Multi-dimensional knapsack problem, Surgery block allocation problem},
abstract = {This paper studies a distributionally robust joint chance-constrained program with a hybrid ambiguity set including the Wasserstein metric, and moment and bounded support information of uncertain parameters. For the considered mathematical program, the random variables are located in a given support space, so a set of random constraints with a high threshold probability for all the distributions that are within a specified Wasserstein distance from an empirical distribution, and a series of moment constraints have to be simultaneously satisfied. We first demonstrate how to transform the distributionally robust joint chance-constrained program into an equivalent reformulation, and show that such a program with binary variables can be equivalently reformulated as a mixed 0–1 integer conic program. To reduce the computational complexity, we derive a relaxed approximation of the joint DRCCP-H using McCormick envelop relaxation, and introduce linear relaxed and conservative approximations by using norm-based inequalities when the Wasserstein metric uses the lp-norm with p≠1 and p≠∞. Finally, we apply this new scheme to address the multi-dimensional knapsack and surgery block allocation problems. The results show that the model with a hybrid ambiguity set yields less conservative solutions when encountering uncertainty over the model with an ambiguity set involving only the Wasserstein metric or moment information, verifying the merit of considering the hybrid ambiguity set, and that the linear approximations significantly reduce the computational time while maintaining high solution quality.}
}

@article{Cheramin2022,
  title = {Computationally Efficient Approximations for Distributionally Robust Optimization Under Moment and Wasserstein Ambiguity},
  volume = {34},
  ISSN = {1526-5528},
  doi = {https://doi.org/10.1287/ijoc.2021.1123},
  number = {3},
  journal = {INFORMS Journal on Computing},
  publisher = {Institute for Operations Research and the Management Sciences (INFORMS)},
  author = {Cheramin,  Meysam and Cheng,  Jianqiang and Jiang,  Ruiwei and Pan,  Kai},
  year = {2022},
  month = may,
  pages = {1768–1794}
}

@book{bertsimas2022robust,
  title={Robust and Adaptive Optimization},
  author={Bertsimas, Dimitris and den Hertog, Dick},
  isbn={9781733788526},
  url={https://books.google.de/books?isbn=9781733788526},
  year={2022},
  publisher={Dynamic Ideas LLC},
}

@book{GoerigkHartisch24,
  author={Marc Goerigk and Michael Hartisch},
  title={{An Introduction to Robust Combinatorial Optimization}},
  publisher={Springer},
  year=2024,
  month={4},
  volume={},
  number={},
  series={International Series in Operations Research \& Management Science},
  edition={},
  doi={https://doi.org/10.1007/978-3-031-61261-9},
}

@book{kouvelis97,
author = {Kouvelis, Panos and Yu, Gang},
address = {Boston, MA},
isbn = {9780792342915},
keywords = {Civil & Environmental Engineering ; Engineering & Applied Sciences ; Operations Research},
language = {eng},
publisher = {Springer US},
series = {Nonconvex Optimization and Its Applications, 14},
title = {Robust Discrete Optimization and Its Applications},
year = {1997},
url={https://books.google.de/books?isbn=9780792342915}
}

@book{Ben-Tal:RobustOptimization,
  added-at = {2011-08-17T16:08:47.000+0200},
  author = {Ben-Tal, A. and Ghaoui, L. El and Nemirovski, A.},
  biburl = {https://www.bibsonomy.org/bibtex/2b5906f9575b1bbe0e28c629d41d826b4/pcbouman},
  interhash = {ae80aac6981fd6be30bc5bf9a457258c},
  intrahash = {b5906f9575b1bbe0e28c629d41d826b4},
  keywords = {book optimization robust},
  publisher = {Princeton University Press},
  timestamp = {2011-08-18T09:10:27.000+0200},
  title = {Robust Optimization},
  year={2009},
  isbn={9781400831050},
  url={https://books.google.de/books?isbn=9781400831050}
}

@article{unitcommitment,
  title = {Large-scale unit commitment under uncertainty: an updated literature survey},
  volume = {271},
  ISSN = {1572-9338},
  DOI = {https://doi.org/10.1007/s10479-018-3003-z},
  number = {1},
  journal = {Annals of Operations Research},
  publisher = {Springer Science and Business Media LLC},
  author = {van Ackooij,  W. and Danti Lopez,  I. and Frangioni,  A. and Lacalandra,  F. and Tahanan,  M.},
  year = {2018},
  month = sep,
  pages = {11–85}
}

@article{Kuhn_Shafiee_Wiesemann_2025,
title={Distributionally robust optimization},
volume={34},
DOI={https://doi.org/10.1017/S0962492924000084},
journal={Acta Numerica},
author={Kuhn, Daniel and Shafiee, Soroosh and Wiesemann, Wolfram},
year={2025},
pages={579–804}
}

@inproceedings{JinEtAl2021,
 author = {Jin, Jikai and Zhang, Bohang and Wang, Haiyang and Wang, Liwei},
 booktitle = {Advances in Neural Information Processing Systems},
 editor = {M. Ranzato and A. Beygelzimer and Y. Dauphin and P.S. Liang and J. Wortman Vaughan},
 pages = {2771--2782},
 publisher = {Curran Associates, Inc.},
 title = {Non-convex Distributionally Robust Optimization: Non-asymptotic Analysis},
 url = {https://proceedings.neurips.cc/paper_files/paper/2021/file/164bf317ea19ccfd9e97853edc2389f4-Paper.pdf},
 volume = {34},
 year = {2021}
}

@article{NeufeldEtAl2025,
      title={Robust SGLD algorithm for solving non-convex distributionally robust optimisation problems}, 
      author={Ariel Neufeld and Matthew Ng Cheng En and Ying Zhang},
      year={2025},
      journal={arXiv preprint 2403.09532},
      doi={https://doi.org/10.48550/arXiv.2403.09532}
}

@article{Leyffer02042020,
author = {Sven Leyffer and Matt Menickelly and Todd Munson and Charlie Vanaret and Stefan M. Wild},
title = {A survey of nonlinear robust optimization},
journal = {INFOR: Information Systems and Operational Research},
volume = {58},
number = {2},
pages = {342--373},
year = {2020},
publisher = {Taylor \& Francis},
doi = {https://doi.org/10.1080/03155986.2020.1730676},
}

\end{document}

\appendix
\section{Optimization Problems and their relations.}

\FRil{I write this section as I want to get an overview on the different optimization problems that we use, as the basic problem \eqref{problem:basic} is reformulated a couple of times. I think this enlights me (and maybe some other readers of this manuscript) a bit.}

The basic problem is just formulated over an abstract ambiguity set.

\begin{subequations}\label{problem:basic_appendix}
	\begin{align}
		\max_{x,x^-,x^+} \quad & c^\top (x,x^-,x^+)^\top \\
		\label{problem:basic_prob_constr_appendix}\text{s.t.} \quad & \min_{\mathbbm{P} \in \u} x \mathbbm{P}([x^-, x^+]) \geq b\\
		& (x, x^-, x^+) \in P.
	\end{align}
\end{subequations}

The problem is specified by making use of the duality of continuous functions and non-negative Radon measures. This leads to

\begin{subequations}
	\label{Prob: Primal_Purity_Constraint_envelope_appendix}
	\begin{align}
		\max_{x,x^-,x^+} \quad & c^\top (x,x^-,x^+)^\top &&\\
		\text{s.t. }b \leq \min~& \langle x\mathbbm{1}_{[x^-,x^+]}(t),\mass{}\rangle && \label{Constr: Objective_Primal_Purity_Constraint_appendix}\\
		\text{s.t.}~& \mass{} \in \mathcal{M}(T)_{\ge 0}\\
		&\langle 1, \mass{} \rangle \geq 1 \label{Constr: Primal_Purity_Constraint_envelope1_appendix}\\
		&\langle -1, \mass{} \rangle \geq -1 \\
		& \langle -t, \mass{}\rangle \geq -\mu_{+},\label{Constr: First_Moment1_appendix} \\
		& \langle t, \mass{}\rangle \ge \mu_{-}, \label{Constr: First_Moment2_appendix}\\
		&\langle -t^2+2\mu t ,\mass{}\rangle \geq -\varianz{}^2\varepsilon_\sigma + \mu^2  \label{Constr: Second_Moment_appendix}\\
		& \rho(t) \leq \bar{\rho}(t) && \forall t \in T. \label{Constr: envelope_discretized_appendix}\\
		& (x, x^-, x^+) \in P.&&
	\end{align}
\end{subequations}

Problem~\eqref{Prob: Primal_Purity_Constraint_envelope_appendix} specifies Problem~\eqref{problem:basic_appendix}.
An aggregation of the (infinitely many) envelope constraints leads to

\begin{subequations}
	\label{Prob: Primal_Purity_Constraint_envelope_appendix2}
	\begin{align}
		\max_{x,x^-,x^+} \quad & c^\top (x,x^-,x^+)^\top &&\\
		\text{s.t. }b \leq \min~& \langle x\mathbbm{1}_{[x^-,x^+]}(t),\mass{}\rangle && \label{Constr: Objective_Primal_Purity_Constraint_appendix2}\\
		\text{s.t.}~& \mass{} \in \mathcal{M}(T)_{\ge 0}\\
		&\langle 1, \mass{} \rangle \geq 1 \label{Constr: Primal_Purity_Constraint_envelope1_appendix2}\\
		&\langle -1, \mass{} \rangle \geq -1 \\
		& \langle -t, \mass{}\rangle \geq -\mu_{+},\label{Constr: First_Moment1_appendix2} \\
		& \langle t, \mass{}\rangle \ge \mu_{-}, \label{Constr: First_Moment2_appendix2}\\
		&\langle -t^2+2\mu t ,\mass{}\rangle \geq -\varianz{}^2\varepsilon_\sigma + \mu^2  \label{Constr: Second_Moment_appendix2}\\
		& \langle -\mathbbm{1}_{[\tau,\tau+\delta_N)}(t),\mass{} \rangle \geq - \delta_N \cdot \bound{+}{\tau} && \forall \tau \in T_N. \label{Constr: envelope_discretized_appendix2}\\
		& (x, x^-, x^+) \in P.&&
	\end{align}
\end{subequations}

It is shown that Problems~\eqref{Prob: Primal_Purity_Constraint_envelope_appendix2} feasible sets converge to the feasible set of Problem~\eqref{Prob: Primal_Purity_Constraint_envelope_appendix} if $\delta_N \to 0$.

In order to be able to apply strong duality, we have to replace indicator functions by their continuous approximations.
\FRil{It is not clear/there is no statement in the paper about the relation of the solutions/optimal values of \eqref{Prob: Primal_Purity_Constraint_envelope_appendix2} and \eqref{Prob: Primal_Purity_Constraint_envelope_appendix3}.}

\begin{subequations}
	\label{Prob: Primal_Purity_Constraint_envelope_appendix3}
	\begin{align}
		\max_{x,x^-,x^+} \quad & c^\top (x,x^-,x^+)^\top &&\\
		\text{s.t. }b \leq \min~& \langle x\mathbbm{1}^c_{[x^-,x^+]}(t),\mass{}\rangle && \label{Constr: Objective_Primal_Purity_Constraint_appendix3}\\
		\text{s.t.}~& \mass{} \in \mathcal{M}(T)_{\ge 0}\\
		&\langle 1, \mass{} \rangle \geq 1 \label{Constr: Primal_Purity_Constraint_envelope1_appendix3}\\
		&\langle -1, \mass{} \rangle \geq -1 \\
		& \langle -t, \mass{}\rangle \geq -\mu_{+},\label{Constr: First_Moment1_appendix3} \\
		& \langle t, \mass{}\rangle \ge \mu_{-}, \label{Constr: First_Moment2_appendix3}\\
		&\langle -t^2+2\mu t ,\mass{}\rangle \geq -\varianz{}^2\varepsilon_\sigma + \mu^2  \label{Constr: Second_Moment_appendix3}\\
		& \langle -\mathbbm{1}^c_{[\tau,\tau+\delta_N)}(t),\mass{} \rangle \geq - \delta_N \cdot \bound{+}{\tau} && \forall \tau \in T_N. \label{Constr: envelope_discretized_appendix3}\\
		& (x, x^-, x^+) \in P.&&
	\end{align}
\end{subequations}

Afterward, the inner problem is dualized. We obtain

\begin{subequations}
	\label{Prob: Primal_Purity_Constraint_envelope_appendix4}
	\begin{align}
		\max_{x,x^-,x^+} \quad & c^\top (x,x^-,x^+)^\top\\
		\text{s.t. }b \leq \sup_{y \in \mathbb{R}^{5}_{\ge 0}, z \in \mathbb{R}_{\ge 0}^{T_N}}& \langle (1,-1, -\mu_{+}, \mu_{-},-\varepsilon_\sigma \varianz{}^2+\mu^2 ),y\rangle - \delta_N \sum_{\tau \in T_N} \bound{+}{\tau} \boundvariable{\tau} \\
		\st\ & x\mathbbm{1}_{[x^-,x^+]}^c(t) - y_{1} + y_{2}+y_{3} t - y_{4} t + y_{5} (t^2-2\mu t) \notag\\
		& \qquad\qquad\qquad\qquad\qquad + \sum_{\tau \in T_N} \mathbbm{1}^c_{[\tau,\tau+\delta_N)}(t) \boundvariable{\tau}\in \Ccal(T)_{\ge 0}\label{Constr: dual_purity2}\\
		& (x, x^-, x^+) \in P.
	\end{align}
\end{subequations}

Theorem 1 guarantees strong duality under mild assumptions.
As Problem~\eqref{Prob: Primal_Purity_Constraint_envelope_appendix4} has infinitely many constraints, we discretize and get the following relaxation (as the dual has a greater feasible set now)

\begin{subequations}\label{Prob: dist_robust_chromatography_discretized_conic2}
	\begin{align}
		\max_{x,x^-,x^+,y,z}~ & c^\top (x,x^-,x^+)^\top \\
		\text{s.t.}~ & b \le \langle (1,-1, -\mu_{+}, \mu_{-},-\varepsilon_\sigma\varianz{}^2+\mu^2 ),y\rangle - \delta_N \sum_{\bar{t}\in T_N} \bound{+}{\bar{t}} \boundvariable{\bar{t}} \label{Eq: dual_conic_obj2_2}\\
		& x\mathbbm{1}_{[x^-,x^+]}^c(\bar{t}) - y_{1} + y_{2}+y_{3} \bar{t} - y_{4}\bar{t} + y_{5}  (\bar{t}^2 -2\mu\bar{t})+\boundvariable{\bar{t}} \geq 0 \qquad \forall \bar{t}\in T_N \label{Eq: dual_conic_constraint2}\\
		& (x, x^-, x^+)\in P, y \in \R^{5}_{\ge 0}, z \in \R_{\ge 0}^{T_N},
	\end{align}
\end{subequations}

In order to guarantee the feasibility of \eqref{Prob: Primal_Purity_Constraint_envelope_appendix4}, we have to modify several constraints and add some. We end up with

\begin{subequations}\label{Prob: dist_robust_chromatography_discretized_conic3}
	\begin{align}
		\max_{x,x^-,x^+,y,z}~ & c^\top (x,x^-,x^+)^\top \\
		\text{s.t.}~ & b \le \langle (1,-1, -\mu_{+}, \mu_{-},-\varepsilon_\sigma\varianz{}^2+\mu^2 ),y\rangle - \delta_N \sum_{\bar{t}\in T_N} \bound{+}{\bar{t}} \boundvariable{\bar{t}} \label{Eq: dual_conic_obj3_2}\\
		& x\mathbbm{1}_{[x^-,x^+]}^c(\bar{t}) - y_{1} + y_{2}+y_{3} \bar{t} - y_{4}\bar{t} + y_{5}  (\bar{t}^2 -2\mu\bar{t})+\boundvariable{\bar{t}} - y_5 \delta^2_N \geq 0 \qquad \forall \bar{t}\in T_N \label{Eq: dual_conic_constraint3}\\
		& x\mathbbm{1}_{[x^-,x^+]}^c(\bar{t}) - y_{1} + y_{2}+y_{3} \bar{t} - y_{4}\bar{t} + y_{5}  (\bar{t}^2 -2\mu\bar{t})+\boundvariable{\bar{t} - \delta_N} - y_5 \delta^2_N \geq 0 \qquad \forall \bar{t}\in T_N \setminus \{0\} \label{Eq: dual_conic_constraint3_shifted}\\
		& z_{x^+}+p_y(x^+) - y_{5}\delta_N^2\geq 0,\\
		& z_{x^--\delta_N}+p_y(x^-) - y_{5}\delta_N^2\geq 0,\\
		& (x, x^-, x^+)\in P, y \in \R^{5}_{\ge 0}, z \in \R_{\ge 0}^{T_N},
	\end{align}
\end{subequations}

\begin{remark}
	Not all solutions to Problem~\eqref{Prob: dist_robust_chromatography_discretized_conic3} are feasible to Problem~\eqref{Prob: Primal_Purity_Constraint_envelope_appendix4}. This implies that Lemma 3 is wrong. Consider
	\begin{align*}
		x = 1, x^- = 1, x^+ = 2, \delta_N = 1, T = [0, 3), T_N = \{0, 1, 2\},\\
		\mu_- = 0, \mu=1, \mu_+ = 2, \sigma_+ = 1, b = 1,\\
		\bar{\rho}_+(0) = 0, \bar{\rho}_+(1) = 1, \bar{\rho}_+(2) = 0,
	\end{align*}
	then,
	\begin{equation*}
		y_1 = 1, y_2 = 0, y_3 = 0, y_4 = 0, y_5 = 0, z_0 = 1, z_1 = 0, z_2 = 1
	\end{equation*}
	fulfills \eqref{Prob: dist_robust_chromatography_discretized_conic3} but not \eqref{Prob: Primal_Purity_Constraint_envelope_appendix4}.
\end{remark}

We discretize to resolve some of the nonlinearities to end up with some MILP-like optimization problem.

\section{Graveyard of Broken Dreams}

Since every $\bar{t}\in T_N$ is contained in exactly one of the intervals $[\tau,\tau+\delta_N)$, namely if and only if $\bar{t}=\tau$ holds, we have:
\[\sum_{\tau \in T_N} \mathbbm{1}_{[\tau,\tau+h)}^c(\bar{t}) \boundvariable{\tau} = \boundvariable{\bar{t}}.\]
Thus, discretizing Constraint \eqref{Constr: dual_purity} leads to the following relaxation of \eqref{Constr: dual_purity}:
$$x\mathbbm{1}_{[x^-,x^+]}^c(\bar{t}) - y_{1} + y_{2}+y_{3} \bar{t} - y_{4}\bar{t} + y_{5}  \bar{t}^2 - y_{5}2\mu\bar{t} +\boundvariable{\bar{t}} \geq 0 \qquad \forall \bar{t}\in T_N.$$
For the remainder of this section, we aim to optimize a linear function over $x,x^-,x^+$ subject to the DRO Constraint \eqref{Prob: Dual_Purity_Constraint} by solving:
\Regie{Hier kam zum ersten Mal das eigentliche Problem das man löst.}
\begin{subequations}\label{Prob: dist_robust_chromatography_discretized_conic}
	\begin{align}
		\max_{x,x^-,x^+,y,z}~ & c^\top (x,x^-,x^+)^\top \\
		\text{s.t.}~ & b \le \langle (1,-1, -\mu_{+}, \mu_{-},-\varepsilon_\sigma\varianz{}^2+\mu^2 ),y\rangle - \delta_N \sum_{\bar{t}\in T_N} \bound{+}{\bar{t}} \boundvariable{\bar{t}} \label{Eq: dual_conic_obj}\\
		& x\mathbbm{1}_{[x^-,x^+]}^c(\bar{t}) - y_{1} + y_{2}+y_{3} \bar{t} - y_{4}\bar{t} + y_{5}  (\bar{t}^2 -2\mu\bar{t})+\boundvariable{\bar{t}} \geq 0 \qquad \forall \bar{t}\in T_N \label{Eq: dual_conic_constraint}\\
		& (x, x^-, x^+)\in P, y \in \R^{5}_{\ge 0}, z \in \R_{\ge 0}^{T_N},
	\end{align}
\end{subequations}
where $P\subseteq \R^3$ denotes a polytope.
\FRil{Das kommt so richtig unmotiviert daher.}
However, in order to achieve a safe approximation, we will restrict $x^-,x^+$ to be in $T_N\subseteq T$.
Moreover, since \eqref{Eq: dual_conic_constraint} is only a relaxation of \eqref{Constr: dual_purity}, a solution to \eqref{Prob: dist_robust_chromatography_discretized_conic} does not necessarily satisfy \eqref{Constr: dual_purity}.
In order to identify potential infeasibilities, Figures 1 - 6 below illustrate the shape of Constraint \eqref{Eq: dual_conic_constraint} for possible choices of $x,x^-,x^+$ with $z=0$. Moreover, we denote the minimum of the polynomial 
$$p_y(t)\coloneqq - y_{1} + y_{2}+y_{3} t - y_{4} t + y_{5} (t^2-2\mu t)$$
by $p_{\min}$ and thereby illustrate the different interactions between the indicator function $x\mathbbm{1}_{[x^-,x^+]}$ on the one hand and the polynomial $p_y$ on the other hand. Note that $x\mathbbm{1}_{[x^-,x^+]}$ is chosen as an example for such interactions as there are potentially also rapid changes in \eqref{Constr: dual_purity} caused by different values of $z_{\bar{t}}$ and $z_{\bar{t}+\delta_N}$.

\begin{minipage}[t]{0.3333\textwidth}
	\captionsetup{width=.9\linewidth}
	\includegraphics[width=\textwidth]{xnotinXwminlinks}
	\captionof{figure}{$x<0$ and $p_\text{min}<x^-$}
	\label{fig: xnotinXwminlinks}
\end{minipage}
\begin{minipage}[t]{0.3333\textwidth}
	\captionsetup{width=.9\linewidth}
	\includegraphics[width=\textwidth]{xnotinXwminmitte}
	\captionof{figure}{$x<0$ and $x^-\leq p_\text{min}\leq x^+$}
	\label{fig: xnotinXwminmitte}
\end{minipage}
\begin{minipage}[t]{0.33333\textwidth}
	\captionsetup{width=.9\linewidth}
	\includegraphics[width=\textwidth]{xnotinXwminrechts}
	\captionof{figure}{$x<0$ and $p_\text{min} > x^+$}
	\label{fig: xnotinXwminrechts}
\end{minipage}

\begin{minipage}[t]{0.3333\textwidth}
	\captionsetup{width=.9\linewidth}
	\includegraphics[width=\textwidth]{xinXwminlinks}
	\captionof{figure}{$x>0$ and $p_\text{min}<x^-$}
	\label{fig: xinXwminlinks}
\end{minipage}
\begin{minipage}[t]{0.3333\textwidth}
	\captionsetup{width=.9\linewidth}
	\includegraphics[width=\textwidth]{xinXwminmitte}
	\captionof{figure}{$x>0$ and $x^-\leq p_\text{min}\leq x^+$}
	\label{fig: xinXwminmitte}
\end{minipage}
\begin{minipage}[t]{0.33333\textwidth}
	\captionsetup{width=.9\linewidth}
	\includegraphics[width=\textwidth]{xinXwminrechts}
	\captionof{figure}{$x>0$ and $p_\text{min}>x^+$}
	\label{fig: xinXwminrechts}
\end{minipage}

In order to modify \eqref{Prob: dist_robust_chromatography_discretized_conic} in a way, that its solutions are also feasible for \eqref{Prob: Dual_Purity_Constraint}, we aim to identify the critical points of \eqref{Constr: dual_purity} with the following Lemma \ref{Lemma: crucial_points} and subsequently find an inner approximation, that sharpens Constraint \eqref{Constr: dual_purity} enough to make these critical points feasible. To ease the presentation, we introduce
$$f_{y,z}^c(t) \coloneqq p_y(t) + \sum_{\tau\in T_N} \mathbbm{1}_{[\tau,\tau+\delta_N)}^c(t)z_\tau + x\mathbbm{1}_{[x^-,x^+]}^c(t)$$
and observe, that Constraint \eqref{Constr: dual_purity} can be rewritten as $f_{y,z}^c(t)\geq 0$ for every $t\in T$.

\begin{lemma}\label{Lemma: crucial_points}
	Let Constraint \eqref{Eq: dual_conic_constraint} be feasible for all $\bar{t} \in T_N$. Suppose Constraint \eqref{Constr: dual_purity} is violated, then it is violated either for $\minparabel = \frac{2\mu y_5+y_4-y_3}{2y_5}$ or for a point $t'\in [x^- -\delta,x^- + \delta]\cup [x^+ -\delta,x^+ +\delta]\cup \bigcup_{\bar{t}\in T_N} [\bar{t}-\delta,\bar{t}+\delta]$, where $\delta>0$ can be chosen arbitrarily small.
\end{lemma}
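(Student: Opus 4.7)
The plan is to decompose $f^c_{y,z}$ into the smooth quadratic part $p_y$ and two sums of continuous approximators of step functions, then to localize the nonsmooth behavior of $f^c_{y,z}$ to arbitrarily small $\delta$-neighborhoods of the set $\{x^-,x^+\}\cup T_N$. Writing $N_\delta \coloneqq [x^--\delta,x^-+\delta]\cup[x^+-\delta,x^++\delta]\cup\bigcup_{\bar t\in T_N}[\bar t-\delta,\bar t+\delta]$, the aim is to show that if \eqref{Constr: dual_purity} is violated at all, then it is violated either at $\minparabel$ or at some point of $N_\delta$.

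First I would establish the exact shape of $f^c_{y,z}$ off $N_\delta$. By Urysohn's lemma, for any $\delta>0$ the (finitely many) continuous approximators can be chosen simultaneously so that $\mathbbm{1}^c_{[x^-,x^+]}(t)=\mathbbm{1}_{[x^-,x^+]}(t)$ and $\mathbbm{1}^c_{[\tau,\tau+\delta_N)}(t)=\mathbbm{1}_{[\tau,\tau+\delta_N)}(t)$ for every $t\notin N_\delta$. Because the half-open intervals $\{[\tau,\tau+\delta_N):\tau\in T_N\}$ partition $T$, on every connected component $I$ of $T\setminus N_\delta$ exactly one term $\mathbbm{1}^c_{[\tau,\tau+\delta_N)}z_\tau$ is nonzero and equals the constant $z_\tau$, while $x\mathbbm{1}^c_{[x^-,x^+]}$ equals either $0$ or $x$ throughout $I$. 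Hence $f^c_{y,z}|_I = p_y + c_I$ for some constant $c_I\in\R$; this restriction is affine if $y_5=0$ and strictly convex with unique minimizer $\minparabel$ if $y_5>0$.

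Next I would locate any violation. Suppose $f^c_{y,z}(t^*)<0$ for some $t^*\in T$. If $t^*\in N_\delta$, the conclusion holds immediately. Otherwise $t^*$ lies in a closed component $I=[a,b]$ of $T\setminus N_\delta$ whose endpoints satisfy $a,b\in N_\delta$, and the minimum of $f^c_{y,z}=p_y+c_I$ over $[a,b]$ is at most $f^c_{y,z}(t^*)<0$. If $y_5>0$ and $\minparabel\in[a,b]$, this minimum is attained at $\minparabel$, which is therefore itself a point of violation. In every remaining subcase (either $y_5=0$, or $y_5>0$ with $\minparabel\notin[a,b]$), the minimum of an affine or strictly convex function on a closed interval is attained at an endpoint, so one of $a,b\in N_\delta$ serves as the claimed violation point $t'$.

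The main obstacle I anticipate is the careful handling of the Urysohn construction: a single $\delta>0$ must be selected uniformly for all finitely many indicator approximators, and the approximators must respect the sign conventions of \eqref{Eq: Urysohn_approx} so that the asserted inequalities are consistent with the direction of approximation. The finiteness of $T_N$ makes this uniform choice routine, but it needs to be invoked explicitly in order for $\delta$ to be taken arbitrarily small in the conclusion. A secondary, easily dispatched subtlety is the degenerate regime $y_5=0$ (and the case $\minparabel\notin T$), in which no interior critical point of $p_y$ exists; both fall into the endpoint branch of the second step above.
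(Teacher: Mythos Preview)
Your proposal is correct and follows essentially the same line as the paper's own proof: both arguments exploit that, away from the $\delta$-neighborhoods of $\{x^-,x^+\}\cup T_N$, the step-like contributions are locally constant, so $f^c_{y,z}$ coincides there with a vertical shift of the quadratic $p_y$, whose only interior critical point is $\minparabel$. The paper reaches the conclusion via a short contradiction on the location of the global minimizer, whereas you argue directly on the connected component containing a hypothetical violation point; you are also slightly more explicit about the uniform Urysohn choice and the degenerate case $y_5=0$, both of which the paper leaves implicit.
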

\FRil{Das Delta hier im Lemma muss so gewählt sein, dass die Indikatorfunktion Approximationen damit korrespondieren. Die intervalle um die Stützstellen sind genau die, wo es den Approximationen erlaubt ist, zu steigen/zu fallen.}

\begin{proof}
	First, we observe that the polynomial $p_y(t)$ is minimized at $\frac{2\mu y_5+y_4-y_3}{2y_5}$ if $y_5>0$ since
	$$p'_y(t)=0 \Rightarrow y_3-y_4+2y_5 t - 2\mu y_5=0 \Rightarrow t=\frac{2\mu  y_5+y_4-y_3}{2y_5}, \quad p''_y(t)=2y_5 >0.$$
	
	We show, that given an arbitrary fixed $\delta>0$ the minimum value $f_{\min}$ of $f^c$ is attained at a point in $[x^- -\delta,x^- + \delta]\cup [x^+ -\delta,x^+ +\delta]\cup \bigcup_{\bar{t}\in T_N} [\bar{t}-\delta,\bar{t}+\delta]$ or at $\minparabel$: Suppose not, i.e., there exists a $\bar{t}\in T_N$ such that 
	$$f_{\min}\in S\coloneqq (\bar{t}+\delta,\bar{t}+\delta_N-\delta)\cap [x^--\delta,x^-+\delta]^C\cap [x^+-\delta,x^++\delta]^C \cap \{p_{\min}\}^C.$$
	
	Since $S$ is an open set we can find an open interval $(f_{\min}-\varepsilon,f_{\min}+\varepsilon)\subseteq S$. In particular, this on the one hand implies that
	$\sum_{\tau\in T_N} \mathbbm{1}_{[\tau,\tau+\delta_N)}^c(t)z_\tau + x\mathbbm{1}_{[x^-,x^+]}^c(t) = \sum_{\tau\in T_N} \mathbbm{1}_{[\tau,\tau+\delta_N)}^c(f_{\min})z_\tau + x\mathbbm{1}_{[x^-,x^+]}^c(f_{\min})$ for every $t\in (f_{\min}-\varepsilon,f_{\min}+\varepsilon)$. On the other hand, we have that $|f_{\min}-p_{\min}|>\varepsilon$. Now, let w.l.o.g. $f_{\min} < p_{\min}$, then $p_y(f_{\min}+\varepsilon/2) < p_y(f_{\min})$ and consequently $f^c(f_{\min}+\varepsilon/2)< f^c(f_{\min})$ and we have a contradiction to the fact, that $f_{\min}$ was a minimizer of $f^c$.
\end{proof}

Apart from identifying the critical points of \eqref{Eq: dual_conic_constraint}, we observe that the decision variable $x$ influences Constraint \eqref{Eq: dual_conic_constraint} differently, depending whether $x>0$ (see Figures \ref{fig: xinXwminlinks} - \ref{fig: xinXwminrechts}) or $x<0$ (see Figures \ref{fig: xnotinXwminlinks} - \ref{fig: xnotinXwminrechts}). However, this is addressed by the different approximation schemes in \eqref{Eq: Urysohn_approx} for $x>0$ and $x<0$ respectively. , we achieve the following sharpened version of \eqref{Eq: dual_conic_constraint}:

\begin{lemma}\label{Lemma: refinement_main}
	Let $x\in \R, x^-,x^+ \in T_N,y\in \R^5,z\in \R^{T_N}$ satisfy Constraint \eqref{Eq: dual_conic_constraint}, i.e.,
	$$f_{y,z}^c(\bar{t})= x\mathbbm{1}_{[x^-,x^+]}^c(\bar{t}) - y_{1} + y_{2}+y_{3} \bar{t} - y_{4}\bar{t} + y_{5}  (\bar{t}^2 -2\mu\bar{t})+\boundvariable{\bar{t}} \geq 0 \qquad \forall \bar{t}\in T_N,$$
	and
	\begin{subequations}
		\begin{align}
			& f_{y,z}^c(\bar{t}+\delta_N) -z_{\bar{t}+\delta_N}+z_{\bar{t}} \geq 0\text{ for every } \bar{t}\in T_N\setminus \{M\}\label{Eq: additional_constraint_(24c)_strengthened}\\
			& z_{x^+}+p_y(x^+) \geq 0, \label{Eq: additional_constraint_for_jumps}\\
			& z_{x^--\delta_N}+p_y(x^-) \geq 0.\label{Eq: additional_constraint_for_jumps2}
		\end{align}
	\end{subequations}
	Then, the variables $y,z$ satisfy a lifted version of Constraint \eqref{Constr: dual_purity}, namely 
	\begin{equation}\label{Eq: lifted_constraint}
		f^c_{y,z}(t) + p_y(\minparabel-\delta_N)-p_y(\minparabel) \geq 0 \text{ for every } t\in T.
	\end{equation}
\end{lemma}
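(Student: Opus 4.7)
The idea is to combine Lemma \ref{Lemma: crucial_points} with a careful case distinction on the piecewise structure of $f^c_{y,z}$. Since $x^-, x^+ \in T_N$, the set of break points of $f^c_{y,z}$ is contained in $T_N$, so on each interval $[\bar t,\bar t+\delta_N]$ with $\bar t\in T_N$ the step-function contribution $f^c_{y,z}(t)-p_y(t)$ equals a constant $C_{\bar t}$ (up to the Urysohn smoothing). By Lemma \ref{Lemma: crucial_points}, in order to prove \eqref{Eq: lifted_constraint} it suffices to verify it at $t=p_{\min}$ and at the one-sided limits of $f^c_{y,z}$ at the breakpoints $x^-$, $x^+$ and every $\bar t\in T_N$.

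For $t=p_{\min}$ (assuming $y_5>0$, so that $p_{\min}$ lies in the interior; otherwise $p_y$ is monotone and the argument reduces to the boundary cases below) I would pick the unique $\bar t\in T_N$ with $p_{\min}\in[\bar t,\bar t+\delta_N]$. Constraint \eqref{Eq: dual_conic_constraint} at this $\bar t$ reads $p_y(\bar t)+C_{\bar t}\ge 0$, whence $C_{\bar t}\ge -p_y(\bar t)$. Because $p_y$ is convex with minimum at $p_{\min}$ and $\bar t\in[p_{\min}-\delta_N,p_{\min}]$, monotonicity of $p_y$ on $(-\infty,p_{\min}]$ yields $p_y(\bar t)\le p_y(p_{\min}-\delta_N)$, and therefore
\[
f^c_{y,z}(p_{\min})+p_y(p_{\min}-\delta_N)-p_y(p_{\min})\;=\;C_{\bar t}+p_y(p_{\min}-\delta_N)\;\ge\;p_y(p_{\min}-\delta_N)-p_y(\bar t)\;\ge\;0,
\]
which is exactly the claim at $t=p_{\min}$.

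At every other critical point flagged by Lemma \ref{Lemma: crucial_points} I would establish the stronger statement $f^c_{y,z}(t)\ge 0$; since the lift $p_y(p_{\min}-\delta_N)-p_y(p_{\min})\ge 0$, this immediately implies the lifted inequality at those points. At an interior $\bar t\in T_N$, the right one-sided limit coincides with $f^c_{y,z}(\bar t)\ge 0$ by \eqref{Eq: dual_conic_constraint}, while rearranging \eqref{Eq: additional_constraint_(24c)_strengthened} (applied with parameter $\bar t-\delta_N$) gives $p_y(\bar t)+z_{\bar t-\delta_N}+x\mathbbm{1}^c_{[x^-,x^+]}(\bar t)\ge 0$, which is precisely the left one-sided limit at $\bar t$. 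At $x^+$ the additional drop of $x\mathbbm{1}^c_{[x^-,x^+]}$ to zero means the right limit equals $p_y(x^+)+z_{x^+}$, made non-negative by \eqref{Eq: additional_constraint_for_jumps}; symmetrically, immediately to the left of $x^-$ the indicator $x\mathbbm{1}^c_{[x^-,x^+]}$ still vanishes so the left limit equals $p_y(x^-)+z_{x^--\delta_N}$, controlled by \eqref{Eq: additional_constraint_for_jumps2}.

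The main technical obstacle I anticipate is the careful handling of the Urysohn approximators $\mathbbm{1}^c$, whose direction of approximation depends on the sign of $x$ via \eqref{Eq: Urysohn_approx}. One has to verify that, regardless of $\sign(x)$ and of whether the upper or lower approximator is chosen for each indicator, the values appearing in the constraints \eqref{Eq: additional_constraint_(24c)_strengthened}--\eqref{Eq: additional_constraint_for_jumps2} genuinely dominate the corresponding one-sided limiting values of $f^c_{y,z}$; this is where the asymmetric choices in \eqref{Eq: Urysohn_approx} become crucial. A secondary and more routine point is the degenerate case $y_5=0$, in which $p_{\min}$ is no longer well defined, the lift vanishes, and only the boundary analysis of the preceding paragraph remains.
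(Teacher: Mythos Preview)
Your proposal is correct and follows essentially the same approach as the paper: invoke Lemma~\ref{Lemma: crucial_points} to localize the potential minimizers, handle $p_{\min}$ via the discretized constraint at the adjacent grid point together with the monotonicity estimate $p_y(\bar t)\le p_y(p_{\min}-\delta_N)$, and control the remaining one-sided limits at grid points and at $x^\pm$ using \eqref{Eq: additional_constraint_(24c)_strengthened}--\eqref{Eq: additional_constraint_for_jumps2}. The only cosmetic difference is that the paper bounds the near-boundary values by $-L\delta$ via a Lipschitz argument and then absorbs this into the lift by choosing $\delta$ small, whereas you pass directly to one-sided limits; both routes encode the same idea, and your anticipated ``technical obstacle'' with the Urysohn approximators is exactly what the paper resolves through \eqref{Eq: Urysohn_approx} in its $C_1,C_2,C_3$ estimates.
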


\begin{proof}
	Let w.l.o.g. $t\in [\bar{t},\bar{t}+\delta_N)$. Here, Lemma \ref{Lemma: crucial_points} implies, that all the potential minimizers of $f^c_{y,z}$ are contained in
	$$\{\minparabel\}\cup\bigcup_{\bar{t}\in T_N}[\bar{t}-\delta,\bar{t}+\delta]$$
	since $x^-,x^+\in T_N$. Thus, we have that all potential minimizers are contained in
	$$[\bar{t},\bar{t}+\delta] \cup [\bar{t}+\delta_N-\delta,\bar{t}+\delta_N]\cup\{\minparabel\}.$$
	Hence, for $t\in [\bar{t},\bar{t}+\delta_N)$, we have
	\begin{align*}
		f^c_{y,z}(t) &\geq \min\left\{f^c_{y,z}(\bar{t}),C_1(\delta),C_2(\delta), f^c_{y,z}(\bar{t}+\delta_N), C_3\right\},
	\end{align*}
	with $C_1(\delta)\coloneqq \min_{\delta'\in (0,\delta]} f^c_{y,z}(\bar{t}+\delta')$, $C_2(\delta)\coloneqq\min_{\delta'\in (0,\delta]} f^c_{y,z}(\bar{t}+\delta_N-\delta')$ and $C_3\coloneqq f^c_{y,z}(\minparabel)$. We immediately observe that $f^c_{y,z}(\bar{t}),f^c_{y,z}(\bar{t}+\delta_N)\geq 0$ due to \eqref{Eq: dual_conic_constraint}. Thus, it suffices to prove that $C_1(\delta),C_2(\delta)$ and $C_3$ are larger than $-p_y(\minparabel-\delta_N)+p_y(\minparabel)$ for sufficiently small $\delta>0$. To this end, we denote by $L$ the Lipschitz constant of the polynomial $p_y$ on the compact domain $T$ and conclude:
	\begin{align*}
		C_1(\delta) & = \min_{\delta'\in (0,\delta]} f^c_{y,z}(\bar{t}+\delta')\\
		& = \min_{\delta'\in (0,\delta]} x\mathbbm{1}^c_{[x^-,x^+]}(\bar{t}+\delta')+ \sum_{\tau\in T_{N}}\mathbbm{1}^c_{[\tau,\tau+\delta_N)}(\bar{t}+\delta')z_\tau + p_y(\bar{t}+\delta')\\
		& \overset{\eqref{Eq: Urysohn_approx}}{\geq} \min_{\delta'\in (0,\delta]} x\mathbbm{1}_{[x^-,x^+]}(\bar{t}+\delta')+ \sum_{\tau\in T_{N}}\mathbbm{1}_{[\tau,\tau+\delta_N)}(\bar{t}+\delta')z_\tau + p_y(\bar{t}+\delta')\\
		& = \min_{\delta'\in (0,\delta]} x\mathbbm{1}_{[x^-,x^+]}(\bar{t}+\delta')+ z_{\bar{t}} + p_y(\bar{t}+\delta')\\
		& \geq \min_{\delta'\in (0,\delta]} x\mathbbm{1}_{[x^-,x^+]}(\bar{t}+\delta')+ z_{\bar{t}} + p_y(\bar{t})-L\delta' \geq -L\delta,
	\end{align*}
	where the last inequality holds by \eqref{Eq: additional_constraint_for_jumps} if $\bar{t}=x^+$ and by \eqref{Eq: dual_conic_constraint} otherwise. The same arguments hold for $C_2(\delta)$ and we obtain:
	\begin{align*}
		C_2(\delta)& = \min_{\delta'\in (0,\delta]} f^c_{y,z}(\bar{t}+\delta_N-\delta')\\
		& \overset{\eqref{Eq: Urysohn_approx}}{\geq} \min_{\delta'\in (0,\delta]} x\mathbbm{1}_{[x^-,x^+]}(\bar{t}+\delta_N-\delta')+ z_{\bar{t}} + p_y(\bar{t}+\delta_N-\delta')\\
		& \geq \min_{\delta'\in (0,\delta]} x\mathbbm{1}_{[x^-,x^+]}(\bar{t}+\delta_N-\delta')+ z_{\bar{t}} + p_y(\bar{t}+\delta_N)-L\delta' \geq -L\delta,
	\end{align*}
	where the last inequality holds by \eqref{Eq: additional_constraint_for_jumps2} if $\bar{t}=x^--\delta_N$ and by \eqref{Eq: additional_constraint_(24c)_strengthened} otherwise. Let us now consider $f^c_{y,z}(\minparabel)$ with $\minparabel\in [\bar{t},\bar{t}+\delta_N)$, then 
	\begin{align*}
		C_3 & = f^c_{y,z}(\minparabel)\\
		& = x\mathbbm{1}^c_{[x^-,x^+]}(\minparabel)+ \sum_{\tau\in T_{N}}\mathbbm{1}^c_{[\tau,\tau+\delta_N)}(\minparabel)z_\tau + p_y(\minparabel)\\
		& \overset{\eqref{Eq: Urysohn_approx}}{\geq} x\mathbbm{1}_{[x^-,x^+]}(\minparabel)+ \sum_{\tau\in T_{N}}\mathbbm{1}_{[\tau,\tau+\delta_N)}(\minparabel)z_\tau + p_y(\minparabel)\\
		& = x\mathbbm{1}_{[x^-,x^+]}(\minparabel)+ z_{\bar{t}} + p_y(\minparabel)\\
		& \geq x\mathbbm{1}_{[x^-,x^+]}(\minparabel)+ z_{\bar{t}} + p_y(\bar{t})-(p_y(\bar{t})-p_y(\minparabel))\\
		& = x\mathbbm{1}_{[x^-,x^+]}(\bar{t})+ z_{\bar{t}} + p_y(\bar{t})-(p_y(\bar{t})-p_y(\minparabel))\\
		& \overset{\eqref{Eq: dual_conic_constraint}}{\geq} -(p_y(\bar{t})-p_y(\minparabel)) \geq -(p_y(\minparabel-\delta_N)-p_y(\minparabel)).
	\end{align*}
	Finally, we choose $\delta<(p_y(\minparabel-\delta_N)-p_y(\minparabel))/L$ and the claim follows.
\end{proof}

We note that, in contrast to global arguments, such as global Lipschitz continuity of $f_{y,z}^c$, Lemma \ref{Lemma: refinement_main} is only based on the local slope at $\minparabel$, which in general is a weaker assumption and thereby strengthens our approximation signifio el Triunfo Casillicantly. In particular, combining these statements implies the following sufficient condition for the semiinfinite Constraint \eqref{Constr: dual_purity}.

\begin{lemma}\label{Lemma: discretized_inner_approx}
	Let $x\in \R, x^-,x^+\in T_N, y\in \R^{5}, z\in \R^{T_N}$ satisfy 
	\begin{equation}\label{Eq: additional_constraint_(24c)_strengthened_final}
		f_{y,z}^c(\bar{t}) - y_{5}\delta_N^2 \geq 0\text{ for every } \bar{t}\in T_N
	\end{equation}
	and
	\begin{subequations}
		\begin{align}
			& z_{x^+}+p_y(x^+) - y_{5}\delta_N^2\geq 0,\\
			& z_{x^- - \delta_N}+p_y(x^-) - y_{5}\delta_N^2\geq 0,\\
			& f_{y,z}^c(\bar{t}+\delta_N) -z_{\bar{t}+\delta_N}+z_{\bar{t}} -y_{5}\delta_N^2\geq 0\text{ for every } \bar{t}\in T_N\setminus\{M\}. \label{Eq: additional_constraint_(24c)_strengthened_final_shifted}
		\end{align}
	\end{subequations}	
	Then, $y\in \R^{5}, z\in \R^{T_N}$ satisfy $f_{y,z}^c(t)\geq 0$ for every $t\in T$, i.e. \eqref{Constr: dual_purity}.
\end{lemma}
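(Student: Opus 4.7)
The plan is to reduce Lemma~\ref{Lemma: discretized_inner_approx} to the already proved Lemma~\ref{Lemma: refinement_main} by a constant shift of the dual variable $y_2$. Concretely, I introduce the auxiliary vector $\tilde y := (y_1,\, y_2 - y_5\delta_N^2,\, y_3,\, y_4,\, y_5)$ and leave $z$ unchanged. Since $p_y$ depends on $(y_1,y_2)$ only through the combination $-y_1 + y_2$, this shift satisfies $p_{\tilde y}(t) = p_y(t) - y_5\delta_N^2$ for every $t \in T$, and consequently $f^c_{\tilde y,z}(t) = f^c_{y,z}(t) - y_5\delta_N^2$. Moreover, since $p_{\tilde y}$ differs from $p_y$ only by a constant, both share the same minimizer $\minparabel$.

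The first substantive step is to check that, under this shift, each of the four strengthened hypotheses \eqref{Eq: additional_constraint_(24c)_strengthened_final}--\eqref{Eq: additional_constraint_(24c)_strengthened_final_shifted} imposed on $(y,z)$ becomes exactly the corresponding unstrengthened hypothesis of Lemma~\ref{Lemma: refinement_main} for $(\tilde y, z)$. For example, $f^c_{y,z}(\bar t) - y_5\delta_N^2 \ge 0$ translates to $f^c_{\tilde y,z}(\bar t) \ge 0$, recovering \eqref{Eq: dual_conic_constraint} for $(\tilde y, z)$, and analogously for \eqref{Eq: additional_constraint_(24c)_strengthened}, \eqref{Eq: additional_constraint_for_jumps} and \eqref{Eq: additional_constraint_for_jumps2}. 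Lemma~\ref{Lemma: refinement_main} imposes no sign restriction on $y$, so $\tilde y$ is admissible; applying it to $(\tilde y, z)$ yields
\begin{equation*}
  f^c_{\tilde y,z}(t) + p_{\tilde y}(\minparabel - \delta_N) - p_{\tilde y}(\minparabel) \ge 0 \quad \text{for every } t \in T.
\end{equation*}

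The remaining step is a one-line Taylor expansion. Since $p_y$ is quadratic with leading coefficient $y_5$ and $\minparabel$ is its interior minimum, $p_y(\minparabel - \delta_N) - p_y(\minparabel) = \tfrac{1}{2} p''_y(\minparabel)\,\delta_N^2 = y_5\delta_N^2$, and the identity persists for $p_{\tilde y}$ by constant-shift invariance. Substituting this together with $f^c_{\tilde y,z}(t) = f^c_{y,z}(t) - y_5\delta_N^2$ into the displayed inequality makes the two $y_5\delta_N^2$ contributions cancel, leaving $f^c_{y,z}(t) \ge 0$ for every $t \in T$, which is precisely the semiinfinite dual constraint \eqref{Constr: dual_purity}.

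The main obstacle I foresee is the degenerate regime $y_5 = 0$, in which $p_y$ is affine and $\minparabel$ is not defined as an interior minimizer, so the formula for $\minparabel$ used in Lemma~\ref{Lemma: refinement_main} requires care. In this case, however, the added slack $y_5\delta_N^2$ vanishes and the strengthened hypotheses collapse verbatim onto those of Lemma~\ref{Lemma: refinement_main}; since an affine $p_y$ attains its minimum at an endpoint of $T$, which lies in $T_N$, the critical-point analysis of Lemma~\ref{Lemma: crucial_points} simplifies and the conclusion follows directly. A short case split on whether $y_5 > 0$ therefore completes the argument.
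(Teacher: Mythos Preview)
Your proposal is correct and takes essentially the same approach as the paper: both hinge on the identity $p_y(\minparabel-\delta_N)-p_y(\minparabel)=y_5\delta_N^2$ and then invoke Lemma~\ref{Lemma: refinement_main}. Your explicit constant shift $y_2 \mapsto y_2 - y_5\delta_N^2$ makes the reduction mechanism more transparent than the paper's terser invocation, and your case split on $y_5=0$ is a careful addition the paper leaves implicit.
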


\begin{proof}
	We compute the exact value of $\Delta_p\coloneqq p_y(\minparabel-\delta_N) - p_y(\minparabel)$:
	\begin{align*}
		\Delta_p & = y_{3} (\minparabel-\delta_N) - y_{4}(\minparabel-\delta_N)+ y_{5} (\minparabel-\delta_N)^2 - y_5 2\mu(\minparabel-\delta_N)\\
		& \qquad -(y_{3}\minparabel - y_{4} \minparabel+ y_{5} \minparabel^2 - y_52\mu\minparabel) \\
		& = - y_{3} \delta_N + y_{4}\delta_N  -2 y_{5}\minparabel \delta_N + y_{5}\delta_N^2 +y_52\mu\delta_N\\
		& \overset{\text{Lemma  \ref{Lemma: crucial_points}}}{=} - y_{3} \delta_N + y_{4}\delta_N  + y_{5}\delta_N^2 -2 y_{5}\frac{2\mu y_5+y_{4}-y_{3}}{2y_{5}} \delta_N +y_5 2\mu\delta_N\\
		& = -y_{3} \delta_N + y_{4} \delta_N   + y_{5}\delta_N^2 - (2\mu y_5+y_{4}-y_{3}) \delta_N +y_52\mu\delta_N = y_{5}\delta_N^2.
	\end{align*} 
	Then, Lemma \ref{Lemma: refinement_main} shows that for every $t\in T$, we have that $f^c_{y,z}(t)\geq 0$.
\end{proof}

\FRil{The MIP proof...}
\begin{proof}
	Compared to \eqref{Prob: dist_robust_chromatography_discretized_conic}, we restricted the variables $x^-,x^+$ to be in $T_N\subseteq T$ and modeled $\mathbbm{1}_{[x^-,x^+]}(\bar{t})$ with the help of decision variables $\tilde{b}_{\bar{t}}$. Hence, we show that $\tilde{b}_{\bar{t}}=1\Leftrightarrow\mathbbm{1}_{[x^-,x^+]}(\bar{t})=1$ for every $\bar{t}\in T_N$ in order to prove the claim:
	
	Let $\tilde{b}_{\bar{t}}=1$, then we define on the one hand
	$$\kappa^{\max}\coloneqq \max\{\bar{t}\in T_N: \tilde{b}_{\bar{t}}=1\}.$$  
	This implies that $\tilde{b}_{\kappa^{\max}}=1$ and $\tilde{b}_{\kappa^{\max}+\delta_N}=0$ and thus with \eqref{Constr: discretized_fract_times} we obtain $\Delta^-_{\kappa^{\max}}=0, \Delta^+_{\kappa^{\max}}=1$. On the other hand let
	$$\kappa^{\min}\coloneqq \min\{\bar{t}\in T_N: \tilde{b}_{\bar{t}}=1\}.$$
	Similarly, we observe that $\tilde{b}_{\kappa^{\min}}=1,\tilde{b}_{\kappa^{\min}-\delta_N}=0$ and consequently \eqref{Constr: discretized_fract_times} implies $\Delta^-_{\kappa^{\min}-\delta_N}=1, \Delta^+_{\kappa^{\min}-\delta_N}=0$. Thus, we have identified two indices $\kappa_{\min}-\delta_N,\kappa_{\max}\in T_N$ with nonzero $\Delta^-_{\kappa_{\min}-\delta_N},\Delta^+_{\kappa_{\max}}$ respectively. Due to \eqref{Constr: sum_Delta_leq_2}, these are the only such indices and we obtain $\Delta^-_{\bar{t}}=0$ for every $\bar{t}\in T_N\setminus\{\kappa^{\min}\}$ and $\Delta^+_{\bar{t}}=0$ for every $\bar{t}\in T_N\setminus\{\kappa^{\max}\}$. Moreover, Constraints \eqref{Constr: tbar_geq_x^-} and \eqref{Constr: tbar_leq_x^+} imply $x^-=\kappa^{\min}$ and $x^+=\kappa^{\max}$. Lastly, the definitions of $\kappa^{\min},\kappa^{\max}$ imply 
	$$x^-=\kappa^{\min}\leq \bar{t} \leq \kappa^{\max}=x^+.$$ 
	
	For the reverse implication, we first observe that if there exists a feasible solution to \eqref{Prob: MIP_onedim}, there exists a $\bar{t}\in T_N$ with $\tilde{b}_{\bar{t}}=1$: To this end, we recall that we assumed that \eqref{Constr: x^-_x^+_in_P} implies $x^-\leq x^+$. Applied to \eqref{Constr: adiff_onedim}, we obtain $\sum_{\bar{t}\in T_N}\tilde{b}_{\bar{t}}\geq 1$ and thus the existence of a nonzero $\tilde{b}_{\bar{t}}$.
	
	Thus, we can follow the same arguments as in the previous implication and conclude that
	$$\kappa^{\min}=x^-,\ \kappa^{\max}=x^+,\ \Delta^-_{\kappa^{\min}-\delta_N}=1,\  \Delta^+_{\kappa^{\max}}=1$$
	and
	$\Delta^-_{\bar{t}}=0$ for every $\bar{t}\in T_N\setminus\{\kappa^{\min}-\delta_N\}$ as well as $\Delta^+_{\bar{t}}=0$ for every $\bar{t}\in T_N\setminus\{\kappa^{\max}\}$. Thus, we obtain for the respective $\tilde{b}_{\bar{t}}$:
	$$1=\tilde{b}_{\kappa^{\min}}=\ldots = \tilde{b}_{\kappa^{\max}}=1.$$
	Finally, since $\kappa^{\min}=x^- \leq \bar{t} \leq x^+ = \kappa^{\max}$, we have that $\tilde{b}_{\bar{t}}=1$.
	
	We have now established the equivalence between \eqref{Constr: discretized_purity_strengthened}, \eqref{Constr: discretized_purity_strengthened_shifted} and 
	\eqref{Eq: additional_constraint_(24c)_strengthened_final}, \eqref{Eq: additional_constraint_(24c)_strengthened_final_shifted} respectively. Thus, the conditions for Lemma \ref{Lemma: discretized_inner_approx} are satisfied and since these constraints are an inner approximation of \eqref{Constr: dual_purity} the result follows.
\end{proof}

\FRil{And some comments on constraints in the MIP that do not exist any longer.}
Moreover, the constraints \eqref{Constr: MIP_onedim_x+_special} and \eqref{Constr: MIP_onedim_x-_special} may not appear as linear since they contain the terms $p_y(x^+), p_y(x^-)$ respectively. However, due to tracking of the changes in the indicator function with the binary variables $\Delta^-,\Delta^+$, we can replace them by
\begin{align*}
	\eqref{Constr: MIP_onedim_x+_special} & \Leftarrow x(\tilde{b}_{\bar{t}}-\Delta_{\bar{t}}^+) + z_{\bar{t}} + p_y(\bar{t})-y_{5}\delta_N^2 \geq 0 && \forall \bar{t}\in T_N,\\
	\eqref{Constr: MIP_onedim_x-_special} & \Leftarrow x(\tilde{b}_{\bar{t}+\delta_N} -\Delta_{\bar{t}}^-) +z_{\bar{t}} +p_y(\bar{t}+\delta_N) -y_{5}\delta_N^2\geq 0 &&\forall \bar{t}\in T_N\setminus\{M\}.
\end{align*}
In case it is known that $x\geq 0$, these constraints represent a strengthening of \eqref{Constr: discretized_purity_strengthened} and \eqref{Constr: discretized_purity_strengthened_shifted} and allow us to drop \eqref{Constr: discretized_purity_strengthened} and \eqref{Constr: discretized_purity_strengthened_shifted}. Similarly, if $x\leq 0$, we can drop \eqref{Constr: MIP_onedim_x+_special} and \eqref{Constr: MIP_onedim_x-_special} as they are already implied by \eqref{Constr: discretized_purity_strengthened} and \eqref{Constr: discretized_purity_strengthened_shifted}.

\begin{figure}
	\begin{center}
		\begin{tikzpicture}[scale=.7]
			\def\taunum{0.3}
			\def\deltanum{0.4}
			\def\deltaNnum{2.0}
			\pgfmathsetmacro{\taur}{\taunum}
			\pgfmathsetmacro{\taudelta}{\taunum+\deltanum}
			\pgfmathsetmacro{\tauNminusdelta}{\taunum+\deltaNnum-\deltanum}
			\pgfmathsetmacro{\tauN}{\taunum+\deltaNnum}
			\pgfmathsetmacro{\xmin}{0}
			\pgfmathsetmacro{\xmax}{\taunum+\deltaNnum+1.}
			
			\begin{axis}[
				axis lines=left,
				xmin=\xmin, xmax=\xmax,
				ymin=-0.1, ymax=1.2,
				xlabel={$t$},
				ylabel={},
				xticklabels={},
				ytick={0,1},
				width=14cm, height=6.5cm,
				legend style={draw=none, fill=none, font=\small},
				legend pos=north east,
				clip=false
				]
				
				\addplot[
				very thick,
				blue,
				const plot
				] coordinates {
					(\xmin,0)
					(\taur,0)
					(\taur,1)
					(\tauN,1)
					(\tauN,0)
					(\xmax,0)
				};
				\addlegendentry{$\mathbf{1}_{[\tau,\;\tau+\delta_N]}(t)$}
				
				\addplot[
				very thick,
				red
				] coordinates {
					(\xmin,0)
					(\taur,0)
					(\taudelta,1)
					(\tauNminusdelta,1)
					(\tauN,0)
					(\xmax,0)
				};
				\addlegendentry{$\mathbf{1}^c_{[\tau,\;\tau+\delta_N]}(t)$}
				
				\addplot[densely dashed, gray] coordinates {(\taur, -0.1) (\taur, 1.2)};
				\addplot[densely dashed, gray] coordinates {(\taudelta, -0.1) (\taudelta, 1.2)};
				\addplot[densely dashed, gray] coordinates {(\tauNminusdelta, -0.1) (\tauNminusdelta, 1.2)};
				\addplot[densely dashed, gray] coordinates {(\tauN, -0.1) (\tauN, 1.2)};
				
				\node[anchor=north] at (axis cs:\taur,-0.1) {$\tau$};
				\node[anchor=north] at (axis cs:\taudelta,-0.1) {$\tau+\delta$};
				\node[anchor=north] at (axis cs:\tauNminusdelta,-0.1) {$\tau+\delta_N-\delta$};
				\node[anchor=north] at (axis cs:\tauN,-0.1) {$\tau+\delta_N$};
				
			\end{axis}
		\end{tikzpicture}
	\end{center}
	\caption{Continuous approximation of indicator function $\mathbbm{1}_{\tau, \tau+\delta_N}(t)$.}
\end{figure}

In order to dualize \eqref{Prob: Primal_Purity_Constraint_envelope}, we have to consider continuous overestimators $x\mathbbm{1}_{[x^-,x^+]}^c,
\mathbbm{1}_{[\tau,\tau+\delta_N]}^c$ of the indicator functions
$x\mathbbm{1}_{[x^-,x^+]}, \mathbbm{1}_{[\tau,\tau+\delta_N]}$. This is necessary at this point since the dual cone is the continuous nonnegative functions and therefore with the indicator function the dual problem will be infeasible.
The approximators are chosen such that
\begin{equation}\label{Eq: Urysohn_approx}
	x\mathbbm{1}^c_{[x^-,x^+]} \geq x\mathbbm{1}_{[x^-,x^+]}\text{ and } \mathbbm{1}^c_{[\tau,\tau+\delta_N]} \geq \mathbbm{1}_{[\tau,\tau+\delta_N]},
\end{equation}
with equality for all points that are not in the intervals $(x^- - \delta, x^-), (x^+, x^+ + \delta), (\tau - \delta, \tau), (\tau + \delta_N, \tau + \delta_N + \delta)$ for a $\delta > 0$.
\FRil{Going over to continuous overestimators leads to a more restrictive version of the inner optimization problem - hence to a less conservative problem. We have to take account to this at some point.}

\FRil{We should discuss about the following situation.}
\begin{remark}
	Going over from indicator functions to continuous overestimators can cause infeasibility - at least for fixed $\delta_N$. Consider $N = 2$ with $\delta_N = \frac{1}{2}$, and $\mu_{-} = \mu_{+} = \frac{3}{4}$, $T = [0, 1)$ and $\bar{\rho} = \mathbbm{1}_{[0, 1)}$. We note that the only probability measure that is feasible is $\frac{1}{2}\delta_{\{\frac{1}{2}\}} + \frac{1}{2}\delta_{\{1\}}$. This is excluded if we go over to continuous approximations of indicator functions, as the mass of $\frac{1}{2}$ flows into the second envelope constraint, rendering its LHS $1$, which is strictly smaller than its RHS which is $\frac{1}{2}$.
	This behavior cannot be avoided by choosing a small $\delta$.
\end{remark}
\FRil{Wenn das passiert ist das Ursprungsproblem definitiv unzulässig. Allerdings wissen wir nicht, wann es passiert; es kann halt auch für relativ große $N$ erst auftauchen, und das macht unsere Konvergenzaussagen schon schwächer - richtig viele zulässige Probleme, aber am Ende doch Unzulässigkeit. Unschön. Was helfen würde ist mehr Spiel bei Mittelwert und Varianz - das macht mehr Verteilungen zulässig und das $\min$ kleiner - als das Gesamtproblem konservativer. Hier würde der shift von einem frei wählbaren $\delta$ abhängen, und man bräuchte auch nur mittelwerte shiften. Was, zugegebenermaßen, ein Vorteil ist. Den man noch verkaufen sollte ;)}
\FRil{Wenn man mehr Spiel bei Mittelwert und Varianz erlaubt, kann man sich auch den Weg ``erst diskretisieren, dann dualisieren'' überlegen - auch hier braucht man lediglich etwas mehr Spiel bei Mittelwert und Varianz, um von einer optimistischeren zu einer konservativeren Abschätzung zu gelangen. Hier hängt der shift ab von $\delta_N$. Und Mittelwerts- und Varianzschranken müssten angepasst werden.}

\FRil{Konvergenzbeweise...}

In the present section we prove, that for $N\rightarrow \infty$ solutions of the discretized problem \eqref{Prob: MIP_onedim} converge towards an optimal solution of the SIP:
\begin{subequations}\label{Prob: dist_robust_chromatography_conic}
	\begin{align}
		\max_{x,x^-,x^+,y,z}~ & c^\top (x,x^-,x^+)^\top \\
		\st~ & b \leq \langle (1,-1, -\mu_{+}, \mu_{-},-\varepsilon_\sigma\varianz{}^2+\mu^2 ),y\rangle - \delta_N \sum_{\tau\in T_N} \bound{+}{\tau} \boundvariable{\tau}, \label{Eq: dual_conic_obj2}\\
		& x\mathbbm{1}_{[x^-,x^+]}^c(t) - y_{1} + y_{2} + y_{3}t - y_{4}t + y_{5}  (t^2-2\mu t)\notag\\ 
		& \qquad + \sum_{\tau\in T_N} \mathbbm{1}_{[\tau,\tau+\delta_N)}^c(t) z_\tau \geq 0 && \forall t\in T, \label{Eq: dual_conic_constraint_SIP}\\
		& (x,x^-, x^+)^\top\in P, \label{Constr: dist_rob_conic_a_in_polytope}\\
		& y \in \R^{5}_{\ge 0},z \in \R_{\ge 0}^{T_N}.\label{Constr: dist_robust_chromatic_conic_yz_nonneg}
	\end{align}
\end{subequations}


Finding discretized counterparts of an SIP and proving their convergence is a rather standard approach in semiinfinite programming, maybe best illustrated by Lemma 6.1 in \cite{Shapiro2009a}. However, one usually considers relaxations of the SIP that occur by sampling of the SIP constraint, whereas \eqref{Prob: MIP_onedim} is an inner-xpproximation of \eqref{Prob: dist_robust_chromatography_conic}. Thus, we instead adjust the arguments in the proof of Lemma 6.1 in \cite{Shapiro2009a} for our purpose. To this end, it is essential to ensure that for every optimal solution to \eqref{Prob: dist_robust_chromatography_conic}, there exists a sequence of solutions to the discretized program \eqref{Prob: MIP_onedim} whose objective values converge to the optimal value of \eqref{Prob: dist_robust_chromatography_conic} if $\delta_N\rightarrow 0$. Hence, we consider the following lemmas, where we abbreviate our previous notation and denote
$$c((x,x^-,x^+,y,z)) \coloneqq c^\top (x^\top,(x^-)^\top,(x^+)^\top)^\top.$$

\begin{lemma}\label{Lem: SIP_has_nearby_MIP_solution_x>0}
	Given $\delta_N$ sufficiently small and $\text{int}(P)\neq \emptyset$. For every optimal solution $(x,x^-,x^+,y,z)$ to \eqref{Prob: dist_robust_chromatography_conic} with $x>0$, there exists a solution $(x',(x^-)',(x^+)',y',z')_N$ to the discretized program \eqref{Prob: MIP_onedim} such that $c((x,x^-,x^+,y,z)_N')\geq c((x,x^-,x^+,y,z)) - 2\|c\|_\infty\delta_N$.
\end{lemma}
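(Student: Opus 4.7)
The plan is to build the candidate MIP solution by keeping $x' := x$, rounding $x^-, x^+$ outward to the neighboring grid points of $T_N$, and perturbing $(y,z)$ by lower-order corrections to absorb the $-y_5\delta_N^2$ term in the strengthened MIP constraints. Concretely, set $(x^-)' := \delta_N\lfloor x^-/\delta_N\rfloor$ and $(x^+)' := \delta_N\lceil x^+/\delta_N\rceil$; then $[(x^-)',(x^+)'] \supseteq [x^-,x^+]$ and $|x^\pm-(x^\pm)'| \leq \delta_N$, which immediately yields $c((x',(x^-)',(x^+)',y',z')_N) \geq c((x,x^-,x^+,y,z)) - 2\|c\|_\infty\delta_N$ since only the $x^\pm$-components enter the linear objective. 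Should the rounded triple leave $P$, the hypothesis $\operatorname{int}(P)\neq\emptyset$ permits an additional $O(\delta_N)$ nudge back into $P$, absorbed into the constant.

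For the dual variables I would take $z'_{\bar t} := z_{\bar t} + y_5\delta_N^2$ and $y' := y$, apart from a small adjustment to $y'_1$ explained below. Since $x>0$ and the binary pattern $\tilde b_{\bar t} := \mathbbm{1}_{[(x^-)',(x^+)']}(\bar t)$ dominates $\mathbbm{1}_{[x^-,x^+]}(\bar t)$ on all grid points, the strengthened constraint \eqref{Constr: discretized_purity_strengthened} collapses to $x'\tilde b_{\bar t} + z_{\bar t} + p_y(\bar t) \geq 0$, which is bounded below by the SIP expression $x\mathbbm{1}_{[x^-,x^+]}(\bar t) + z_{\bar t} + p_y(\bar t)$. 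The latter is nonnegative by evaluating \eqref{Eq: dual_conic_constraint_SIP} at $\bar t$ after passing the Urysohn approximators to their pointwise limits. The shifted constraint \eqref{Constr: discretized_purity_strengthened_shifted} follows identically by evaluating the SIP at $t \to (\bar t+\delta_N)^-$, where the active $z$-component is still $z_{\bar t}$; the boundary conditions \eqref{Constr: MIP_onedim_x+_special} and \eqref{Constr: MIP_onedim_x-_special} come from the SIP at $t = (x^+)' + 0$ and $t = (x^-)' - 0$ respectively, choosing a sufficiently sharp approximator.

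The main obstacle is preserving \eqref{Constr: discretized_dual_objective_geq0}: raising every $z_{\bar t}$ by $y_5\delta_N^2$ lowers its left-hand side by $\delta_N \sum_{\bar t\in T_N}\bound{+}{\bar t}\cdot y_5\delta_N^2 = O(\delta_N^2)$, since $\delta_N\sum_{\bar t}\bound{+}{\bar t}$ is a bounded Riemann sum of the integrable envelope. This is offset by increasing $y'_1$ by the same $O(\delta_N^2)$ amount, which is legal because $y_1\geq 0$ and enters the dual objective with coefficient $+1$. The induced $O(\delta_N^2)$ decrease of $p_{y'}$ forces a further $O(\delta_N^2)$ bump in $z'$, but the resulting fixed-point equation $a = \alpha + \beta \delta_N a$ has a solution of size $O(\delta_N^2)$ for $\delta_N$ small enough. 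All $(y,z)$-corrections are therefore of strictly lower order than the $O(\delta_N)$ budget for the objective shift, closing the argument.
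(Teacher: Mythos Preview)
Your construction of $(x^-)',(x^+)'$ by outward rounding and the verification of the strengthened constraints \eqref{Constr: discretized_purity_strengthened}--\eqref{Constr: MIP_onedim_x-_special} via the uniform shift $z'_{\bar t}=z_{\bar t}+y_5\delta_N^2$ are fine, and so is the $2\|c\|_\infty\delta_N$ accounting for the objective. The gap is in your treatment of \eqref{Constr: discretized_dual_objective_geq0}. Bumping \emph{every} $z_{\bar t}$ by an amount $a$ lowers the left-hand side of \eqref{Constr: discretized_dual_objective_geq0} by $\bigl(\delta_N\sum_{\bar t\in T_N}\bound{+}{\bar t}\bigr)\,a$, and you correctly note that $B:=\delta_N\sum_{\bar t}\bound{+}{\bar t}$ is a bounded Riemann sum. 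But it is a Riemann sum of the envelope $\bar\rho$, hence $B\to\int_T\bar\rho\geq 1$ (any admissible density sits below $\bar\rho$), so $B$ is an $O(1)$ constant, not $O(\delta_N)$. Your fixed-point equation is therefore $a=y_5\delta_N^2+B\,a$, not $a=\alpha+\beta\delta_N a$; for $B\geq 1$ it has no nonnegative solution, and the iterated $y_1$/$z$ corrections do not converge. This is precisely the obstruction: raising $y_1$ helps \eqref{Constr: discretized_dual_objective_geq0} but hurts $p_y$ by the same amount, and raising all $z$ helps the pointwise constraints but hurts \eqref{Constr: discretized_dual_objective_geq0} by a factor $B$ times that amount, so there is no contraction.

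The paper circumvents this by a surgical, not uniform, modification. It first uses optimality of $x^\pm$ to get $\lim_{t\uparrow x^-}z_{(x^-)'}+p_y(t)=0$ (and the analogue at $x^+$), which forces $p_y(x^-)\leq 0$, $p_y(x^+)\leq 0$. In the strict cases this makes a \emph{single} $z$-variable, say $z_{(x^-)'}$, strictly positive and hence decreasable; decreasing that one coordinate improves \eqref{Constr: discretized_dual_objective_geq0} by an amount matched exactly by increasing $y_2$ (which simultaneously raises $p_y$ by $y_5\delta_N^2$ and thus absorbs the MIP's $-y_5\delta_N^2$ everywhere). The local deficit at $\bar t=(x^-)'$ caused by the $z$-decrease is covered by the ``free'' $+x$ gained from outward rounding, since $\tilde b_{(x^-)'}=1$ while $\mathbbm{1}_{[x^-,x^+]}((x^-)')=0$. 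The equality case $p_y(x^\pm)=0$ is handled separately by locating $p_{\min}$ and bumping only the handful of $z_{\bar t}$ near it. In short, the missing idea in your argument is to exploit optimality of $x^\pm$ to find a specific $z_\tau>0$ that you can \emph{lower}; without this, the compensation scheme cannot close.
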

\FRil{Das ist etwas unpräzise/irreführend - das Problem \eqref{Prob: dist_robust_chromatography_conic} hängt von $N$ ab, ist also eigentlich eine Problemfamilie parametrisiert mit $N$. Deswegen muss man hier auch von $(x, ...)_N$ sprechen.}

We would like to remark, that the following proof works also for fixed $x>0$ and $\text{relint}(P)\neq \emptyset$.

\begin{proof} 
	We note that every point on a line segment between the strictly feasible solution and $(x,x^-,x^+,y,z)$ is also strictly feasible. By abusing notation, we will identify $(x,x^-,x^+,y,z)$ with an arbitrarily close point on this line segment, i.e. w.l.o.g. we assume $(x,x^-,x^+)\in \text{int}(P)$ and $x^-,x^+\notin T_N$. Consequently, we can define $(x',(x^-)',(x^+)') \in P$ with $(x^-)',(x^+)'\in T_N$ as follows
	\begin{align*}
		x'& \coloneqq x,\\
		(x^-)'& \coloneqq \max\{\bar{t}: \bar{t}\in T_N, \bar{t} < x^-\},\\
		(x^+)' & \coloneqq \min\{\bar{t}: \bar{t}\in T_N, \bar{t} > x^+\}.
	\end{align*} 
	Moreover, since $(x^-)',(x^+)'\in T_N$ we observe that by following the arguments in the proof of Theorem \ref{Thm: MIP_onedim}, we can define $\tilde{b},\Delta^-,\Delta^+$ in a way, that the constraints \eqref{Constr: sum_Delta_leq_2}--\eqref{Constr: MIP_onedim_nonneg_yz} are satisfied. Hence, it suffices to show, that there are also $y',z'$, that satisfy \eqref{Constr: discretized_dual_objective_geq0} -- \eqref{Constr: MIP_onedim_x-_special}:
	
	We observe that as $x^-\neq (x^-)'$, we have $\lim_{t\uparrow x^-}z_{(x^-)'} + p_y(t)=0$ since otherwise $x^-\pm\delta$ would be feasible for sufficiently small $\delta>0$ and thus $x^-$ would not be optimal for \eqref{Prob: dist_robust_chromatography_conic}. Similarly, we conclude $\lim_{t\downarrow x^+}z_{(x^+)'-\delta_N} + p_y(t)=0$ and obtain
	$$p_y(x^-) \leq 0 \text{ and } p_y(x^+) \leq 0.$$
	We distinguish now between the cases, where either strict inequality holds for either $x^-$ or $x^+$ or equality.
	\smallskip
	
	Case 1.1: Consider $p_y(x^-)<0$. Then, for sufficiently small $\delta_N$, we have $z_{(x^-)'}>y_5 \delta_N / \bound{+}{(x^-)'}$. Now, we set 
	$$y'_2=y_2+y_5 \delta_N^2 \text{ and } z'_{(x^-)'}=z_{(x^-)'}-y_5 \delta_N / \bound{+}{(x^-)'}.$$
	\FRil{Nach viel drüber Nachdenken stolpere ich hier über ein Problem - es ist nicht gesagt dass $y_5^N$ (es hängt von $N$ ab) beschränkt ist. Man braucht noch das Argument warum es langsamer wächst als $\delta_N$, und daran beiße ich mir gerade ein bisschen die zähne aus.}
	Inserting these new values into \eqref{Constr: discretized_dual_objective_geq0} does not alter \eqref{Constr: discretized_dual_objective_geq0} and thus we obtain $\eqref{Constr: discretized_dual_objective_geq0} - y_5 \delta_N^2 + \delta_N^2 y_5 \geq b$,
	due to \eqref{Eq: dual_conic_obj2}. 
	
	Moreover, for $\bar{t}\neq (x^-)'$ \eqref{Constr: discretized_purity_strengthened} with $(x^-)',(x^+)',y',z'$ holds immediately due to \eqref{Eq: dual_conic_constraint_SIP}. If $\bar{t}= (x^-)'$, \eqref{Constr: discretized_purity_strengthened} evaluates to:
	\begin{align*}
		& x + z_{(x^-)'}-y_5 \delta_N / \bound{+}{(x^-)'} +p_y((x^-)') + y_5\delta_N^2-y_5\delta_N^2\\
		& = x-y_5 \delta_N / \bound{+}{(x^-)'} +z_{(x^-)'} +p_y((x^-)') \geq 0,
	\end{align*}
	since the $x-y_5 \delta_N / \bound{+}{(x^-)'}\geq 0$ for sufficiently small $\delta_N$ and $z_{(x^-)'} +p_y((x^-)') \geq 0$ due to \eqref{Eq: dual_conic_constraint_SIP}.
	
	Next, \eqref{Constr: discretized_purity_strengthened_shifted} holds for every $\bar{t}\neq (x^-)',(x^+)'$ since
	\begin{align*}
		\eqref{Constr: discretized_purity_strengthened_shifted} 
		& \geq  x\tilde{b}_{\bar{t}}+z_{\bar{t}} +p_y(\bar{t}+\delta_N) + y_5\delta_N^2-y_5\delta_N^2\\
		& = \lim_{\delta \uparrow \delta_N} x\tilde{b}_{\bar{t}} +z_{\bar{t}} +p_y(\bar{t}+\delta) \geq 0,
	\end{align*}
	where the first inequality holds since $\bar{t+\delta_N} \geq \bar{t}$ whenever $\bar{t}\neq (x^+)'$ and the second one holds due to \eqref{Eq: dual_conic_constraint_SIP}. If $\bar{t}= (x^-)'$, we observe $|p_y(\bar{t}+\delta)-p_y(\bar{t})|\leq L_y\delta_N$, where $L_y$ denotes the Lipschitz constant of $p_y$. Furthermore, we obtain 
	\begin{align*}
		\eqref{Constr: discretized_purity_strengthened_shifted} & = x+z_{(x^-)'} -y_5 \delta_N / \bound{+}{(x^-)'} +p_y((x^-)'+\delta_N)\\
		& = x -L_y\delta_N -y_5 \delta_N / \bound{+}{(x^-)'} + z_{(x^-)'} + p_y((x^-)') \geq 0,
	\end{align*}
	where $x -L_y\delta_N -y_5 \delta_N / \bound{+}{(x^-)'}\geq 0$ for sufficiently small $\delta_N$ and $z_{(x^-)'} + p_y((x^-)')\geq 0$ due to \eqref{Eq: dual_conic_constraint_SIP}. If $\bar{t}= (x^+)'$, we obtain with the same Lipschitz argument:
	\begin{align*}
		\eqref{Constr: discretized_purity_strengthened_shifted} & = x+z_{(x^+)'} + p_y((x^+)'+\delta_N)\\
		& = x -L_y\delta_N + z_{(x^+)'} + p_y((x^+)') \geq 0.
	\end{align*}	
	Lastly,
	\eqref{Constr: MIP_onedim_x+_special} holds immediately due to \eqref{Eq: dual_conic_constraint_SIP} since
	$$\eqref{Constr: MIP_onedim_x+_special} = z_{(x^+)'} + p_y((x^+)') + y_5 \delta_N^2 - y_5\delta_N^2 = z_{(x^+)'} + p_y((x^+)'),$$
	which is nonnegative due to \eqref{Eq: dual_conic_constraint_SIP} and again due to \eqref{Eq: dual_conic_constraint_SIP}, we have $\eqref{Constr: MIP_onedim_x-_special} = \lim_{\delta\uparrow \delta_N} z_{(x^-)'-\delta_N} + p_y((x^-)'-\delta_N+\delta) \geq 0$. 
	\smallskip
	
	Case 1.2: Consider $p_y(x^+)<0$. Then, for sufficiently small $\delta_N$, we have $z_{(x^+)'-\delta_N}>y_5 \delta_N / \bound{+}{(x^+)'-\delta_N}$. Now, we set 
	$$y'_2=y_2+y_5 \delta_N^2 \text{ and }z'_{(x^+)'-\delta_N}=z_{(x^+)'-\delta_N}-y_5 \delta_N / \bound{+}{(x^+)'-\delta_N}.$$ 
	
	Again, inserting these new values into \eqref{Constr: discretized_dual_objective_geq0} does not alter \eqref{Constr: discretized_dual_objective_geq0} and we obtain $\eqref{Constr: discretized_dual_objective_geq0} - y_5 \delta_N^2 + \delta_N^2 y_5 \geq b,$
	due to \eqref{Eq: dual_conic_obj2}. 
	
	Moreover, \eqref{Constr: discretized_purity_strengthened} with $(x^-)',(x^+)',y',z'$ holds immediately for $\bar{t}\neq (x^+)'-\delta_N$ due to \eqref{Eq: dual_conic_constraint_SIP}.

	If $\bar{t}= (x^+)'-\delta_N$, we have:
	\begin{align*}
		\eqref{Constr: discretized_purity_strengthened} & = x + z_{(x^+)'-\delta_N}-y_5 \delta_N / \bound{+}{(x^+)'-\delta_N} +p_y((x^+)'-\delta_N) + y_5\delta_N^2-y_5\delta_N^2 \\
		& = x-y_5 \delta_N / \bound{+}{(x^+)'-\delta_N} + z_{(x^+)'-\delta_N} +p_y((x^+)'-\delta_N)\\ & \geq x-y_5 \delta_N / \bound{+}{(x^+)'-\delta_N} + z_{(x^+)'-\delta_N} +p_y(x^+) - L_y\delta_N,
	\end{align*}
	where $L_y$ denotes the Lipschitz constant of $p_y$. Moreover, we obtain $x-y_5 \delta_N / \bound{+}{(x^+)'-\delta_N} -L_y\delta_N \geq 0$ for sufficiently small $\delta_N$ and $z_{(x^+)'-\delta_N} +p_y(x^+) = 0$ as otherwise $x^+$ would not be optimal for \eqref{Prob: dist_robust_chromatography_conic}.
	
	Next, \eqref{Constr: discretized_purity_strengthened_shifted} holds for every $\bar{t}\neq (x^+)'$ since
	\begin{align*}
		\eqref{Constr: discretized_purity_strengthened_shifted} & \geq x\tilde{b}_{\bar{t}}+z_{\bar{t}} -y_5 \delta_N / \bound{+}{(x^+)'-\delta_N}\mathbbm{1}_{\{(x^+)'-\delta_N\}}(\bar{t}) +p_y(\bar{t}+\delta_N) + y_5\delta_N^2-y_5\delta_N^2\\
		& = \lim_{\delta \uparrow \delta_N} x\tilde{b}_{\bar{t}}+z_{\bar{t}} -y_5 \delta_N / \bound{+}{(x^+)'-\delta_N}\mathbbm{1}_{\{(x^+)'-\delta_N\}}(\bar{t}) +p_y(\bar{t}+\delta) \geq 0,
	\end{align*}
	where we applied that $x\tilde{b}_{\bar{t}+\delta_N} \geq x\tilde{b}_{\bar{t}}$ whenever $\bar{t}\neq (x^+)'$ for the first inequality. The nonnegativity holds immediately by \eqref{Eq: dual_conic_constraint_SIP} if $\bar{t}\neq (x^+)'-\delta_N$ and with the same Lipschitz argument as above for \eqref{Constr: discretized_purity_strengthened}. Now, let $\bar{t}=(x^+)'$, then
	\begin{align*}
		\eqref{Constr: discretized_purity_strengthened_shifted} & = z_{(x^+)'}  +p_y((x^+)'+\delta_N) + y_5\delta_N^2-y_5\delta_N^2\\
		& = z_{(x^+)'} +p_y((x^+)'+\delta_N)
	\end{align*}
	However, due to \eqref{Eq: dual_conic_constraint_SIP}, we know that
	$z_{(x^+)'} +p_y((x^+)'+\delta)\geq 0,$
	for every $\delta \in (0,\delta_N)$ and thus
	$$\eqref{Constr: discretized_purity_strengthened_shifted} = \lim_{\delta \uparrow \delta_N} z_{(x^+)'} +p_y(x^++\delta) \geq 0.$$
	
	Moreover, for $\delta\downarrow 0$, these arguments also prove \eqref{Constr: MIP_onedim_x+_special}, whereas \eqref{Constr: MIP_onedim_x-_special} holds since
	\begin{align*}
		\eqref{Constr: MIP_onedim_x-_special} & = z_{(x^-)'-\delta_N} + p_y((x^-)')+y_5\delta_N^2-y_5\delta_N^2\\
		& = \lim_{\delta\uparrow \delta_N} z_{(x^-)'-\delta_N} + p_y((x^-)'-\delta_N + \delta),
	\end{align*}
	which is nonnegative for every $\delta\in (0,\delta_N)$ due to \eqref{Eq: dual_conic_constraint_SIP}.
	\smallskip
	
	Case 2: Suppose $p_y(x^-)=p_y(x^+)=0$. Since $p_y= 0$ is not feasible, if $b>0$, we have that $\frac{\partial}{\partial t} p_y(x^-)<0$ and $\frac{\partial}{\partial t} p_y(x^+)>0$ as these are the only sign changes in the quadratic polynomial $p_y$. Hence, $p_y((x^-)'), p_y((x^+)')>0$ and in particular, $\minparabel\in (x^-,x^+)$. 
	
	Now, let $(\minparabel^-)'\coloneqq \max\{\bar{t}\in T_N: \bar{t} \leq \minparabel\}$. Then, we set 
	\begin{align*}
		& y_1' \coloneqq y_1+y_5\delta_N^2,\\ & z_{\minparabel}' \coloneqq z_{(\minparabel^-)'}+\frac{1}{4}y_5\delta_N/\bound{+}{(\minparabel^-)'},\\ & z_{(\minparabel^-)'+\delta_N}' \coloneqq z_{(\minparabel^-)'+\delta_N}+\frac{1}{4}y_5\delta_N/\bound{+}{(\minparabel^-)'+\delta_N},\\
		& z_{(x^-)'-\delta_N}' \coloneqq z_{(x^-)'-\delta_N}+\frac{1}{4}y_5\delta_N/\bound{+}{(x^-)'-\delta_N}\\
		& z_{(x^+)'}' \coloneqq z_{(x^+)'}+\frac{1}{4}y_5\delta_N/\bound{+}{(x^+)'}.
	\end{align*} As above, we immediately obtain the validity of \eqref{Constr: discretized_dual_objective_geq0}. For \eqref{Constr: discretized_purity_strengthened} with $\bar{t}\neq (\minparabel^-)'$, we obtain
	\begin{align*}
		\eqref{Constr: discretized_purity_strengthened}& = x\tilde{b}_{\bar{t}} + z_{\bar{t}}' +p_{y'}(\bar{t}) - y_5\delta_N^2\\
		& \geq x\tilde{b}_{\bar{t}} + p_y(\bar{t}) - 2y_5\delta_N^2 \eqqcolon (*),
	\end{align*}
	where 
	$$(*) > \begin{cases}
		p_y((x^-)') - 2y_5\delta_N^2 \geq 0 & \text{ if } \bar{t} < (x^-)', \delta_N \text{ suff. small}\\
		p_y((x^+)') - 2y_5\delta_N^2 \geq 0 & \text{ if } \bar{t} > (x^+)', \delta_N \text{ suff. small}\\
		x + p_y(\minparabel) - 2y_5\delta_N^2 \geq 0 & \text{ if } \bar{t} \in [(x^-)',(x^+)'] \setminus\{\minparabel\}, \delta_N \text{ suff. small}.
	\end{cases}$$
	
	For the remaining case $\bar{t}=(\minparabel^-)'$, we have that $\bar{t}\in (x^-,x^+)$ and thus
	$$x\tilde{b}_{\bar{t}} + z_{\bar{t}}' + p_{y'}(\bar{t}) -y_5\delta_N^2 \overset{\eqref{Eq: dual_conic_constraint_SIP}}{\geq} \frac{1}{4}y_5\delta_N/ \bound{+}{\bar{t}}-2y_5\delta_N^2 \geq 0$$	
	for sufficiently small $\delta_N$. Similarly, we show that $y',z'$ satisfy \eqref{Constr: discretized_purity_strengthened_shifted}:
	
	To this end, we observe that if $\bar{t}+\delta_N \neq (\minparabel^-)'$, we have
	$$\eqref{Constr: discretized_purity_strengthened_shifted} \geq x\tilde{b}_{\bar{t}+\delta_N} + p_y(\bar{t}+\delta_N)-2y_5\delta_N^2,$$
	which equals $(*)$ and is thus nonnegative. Moreover, if $\bar{t}+\delta_N =(\minparabel^-)' \in (x^-,x^+)$, we have
	\begin{align*}
		\eqref{Constr: discretized_purity_strengthened_shifted} & = \lim_{\delta\uparrow \delta_N} x + z_{\bar{t}} + \frac{1}{4}y_5\delta_N/\bound{+}{\bar{t}} + p_y(\bar{t}+\delta)-2y_5\delta_N^2\\
		& \overset{\eqref{Eq: dual_conic_constraint_SIP}}{\geq} \frac{1}{4}y_5\delta_N/\bound{+}{\bar{t}}-2y_5\delta_N^2\geq 0,
	\end{align*}
	for sufficiently small $\delta_N$. 
	
	Lastly, $\eqref{Constr: MIP_onedim_x+_special} = z_{(x^+)'} + \frac{1}{4}y_5\delta_N/\bound{+}{(x^+)'} + p_y((x^+)') -2y_5\delta_N^2 \geq 0$, holds for sufficiently small $\delta_N$, whereas $\eqref{Constr: MIP_onedim_x-_special} = z_{(x^-)'-\delta_N} + \frac{1}{4}y_5\delta_N/\bound{+}{(x^-)'-\delta_N} + p_y((x^-)') -2y_5\delta_N^2 \geq 0$
	if $\delta_N$ sufficiently small.
	
	Finally, the objective value of our adjusted solution 
	$x',(x^-)',(x^+)',y',z'$ satisfies 
	$$c(x',(x^-)',(x^+)',y',z') \geq c(x,x^-,x^+,y,z) - 2\|c\|_\infty \delta_N.$$
\end{proof}

\begin{lemma}\label{Lem: SIP_has_nearby_MIP_solution_x<0}
	Given $\delta_N$ sufficiently small and $\text{int}(P)\neq \emptyset$. Then, for every optimal solution $(x,x^-,x^+,y,z)$ to \eqref{Prob: dist_robust_chromatography_conic} with $x<0$, there exists a solution $(x',(x^-)',(x^+)',y',z')_N$ to the discretized program such that $c((x,x^-,x^+,y,z)_N')\geq c((x,x^-,x^+,y,z)) - 4\|c\|_\infty\delta_N$.
\end{lemma}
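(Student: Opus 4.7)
The proof plan mirrors that of Lemma \ref{Lem: SIP_has_nearby_MIP_solution_x>0}, with every directional choice reversed to reflect the sign flip of $x$. Since for $x<0$ the Urysohn approximation satisfies $x\mathbbm{1}^c_{[x^-,x^+]}\geq x\mathbbm{1}_{[x^-,x^+]}$, the natural pairing is with \emph{inward} rounding of the interval endpoints rather than the outward rounding used for $x>0$.

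First, I exploit strict feasibility and slide along a line segment in $\text{int}(P)$ to assume, at arbitrarily small objective cost, that $(x,x^-,x^+)\in\text{int}(P)$ and $x^-,x^+\notin T_N$. Second, I define
$$(x^-)'\coloneqq\min\{\bar t\in T_N:\bar t>x^-\},\qquad (x^+)'\coloneqq\max\{\bar t\in T_N:\bar t<x^+\},$$
so that each of $x^-,x^+$ is shifted by at most $\delta_N$. Because the resulting discretized indicator is supported on a strictly smaller set than $[x^-,x^+]$, at every $\bar t$ formerly inside $[x^-,x^+]$ but now excluded the MIP constraint \eqref{Constr: discretized_purity_strengthened} reduces to $p_y(\bar t)+z_{\bar t}\geq y_5\delta_N^2$, whereas \eqref{Eq: dual_conic_constraint_SIP} already supplies $p_y(\bar t)+z_{\bar t}\geq -x>0$; for $\delta_N$ small this slack absorbs the $y_5\delta_N^2$ safety margin with room to spare.

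Third, I address the degenerate case $(x^-)'>(x^+)'$, which occurs iff $x^+-x^-<\delta_N$. Here I set $\tilde b\equiv 0$ (empty discretized interval) and fix $(x^-)'=(x^+)'$ at a nearest grid point; the MIP constraints then reduce to purely polynomial conditions, which are met by the SIP data up to the $y_5\delta_N^2$ margin that I absorb by bumping $y_2$ and compensating in a single coordinate of $z$. This branch is precisely what forces the constant $4$ rather than $2$: each of $x^-$ and $x^+$ may be displaced by up to $2\delta_N$ in the collapse regime.

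Fourth, with $(x^-)',(x^+)'$ fixed I set $\tilde b,\Delta^-,\Delta^+$ via the correspondence established in the proof of Theorem \ref{Thm: MIP_onedim} and perform the same case split on the signs of $p_y((x^-)')$ and $p_y((x^+)')$ as in Lemma \ref{Lem: SIP_has_nearby_MIP_solution_x>0}. In each branch the $y_5\delta_N^2$ margins are offset by a coordinated increase of $y_2$ and decrease of the relevant $z_{\tau}$, keeping \eqref{Constr: discretized_dual_objective_geq0} intact; a Lipschitz estimate on $p_y$ converts neighbouring evaluations into $O(\delta_N)$ errors dominated by the primal slack for sufficiently small $\delta_N$. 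The verifications of \eqref{Constr: MIP_onedim_x+_special} and \eqref{Constr: MIP_onedim_x-_special} are mirror-symmetric to the corresponding checks in Lemma \ref{Lem: SIP_has_nearby_MIP_solution_x>0}. The main obstacle is the degenerate collapse case, which requires a separate discrete construction rather than a continuous perturbation, together with the need to re-derive every monotonicity inequality with the reversed sense imposed by the lower Urysohn approximator; combining the $\leq 2\|c\|_\infty\delta_N$ loss from the endpoint shifts with the $\leq 2\|c\|_\infty\delta_N$ absorbed in the degenerate adjustment yields the stated bound.
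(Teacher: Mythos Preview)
Your overall plan — round the endpoints of the indicator inward, define $\tilde b,\Delta^\pm$ via Theorem~\ref{Thm: MIP_onedim}, then absorb the safety margin $y_5\delta_N^2$ by a coordinated change of $y_2$ and certain $z_\tau$ — is the right skeleton and matches the paper. However, two points in your proposal are genuine gaps.

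First, your attribution of the constant $4$ is wrong, and with it the construction. For a fixed optimal solution the lemma explicitly assumes $\delta_N$ sufficiently small, so the ``collapse'' case $x^+-x^-<\delta_N$ never arises; it is a red herring, and the paper does not treat it. In the paper the factor $4$ comes from rounding each endpoint inward by up to $2\delta_N$, namely
\[
(x^-)'=\min\{\bar t\in T_N:\bar t>x^-+\delta_N\},\qquad (x^+)'=\max\{\bar t\in T_N:\bar t<x^+-\delta_N\}.
\]
This extra $\delta_N$ buffer is not cosmetic. With your one-step rounding, the grid interval $[(x^-)'-\delta_N,(x^-)')$ still contains $x^-$, so the coordinate $z_{(x^-)'-\delta_N}$ is precisely the one appearing both in the optimality equation at $x^-$ and in the shifted constraint \eqref{Constr: discretized_purity_strengthened_shifted} at $\bar t=(x^-)'-\delta_N$ (where now $\tilde b_{(x^-)'}=1$). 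If you decrease this $z_{(x^-)'-\delta_N}$ to compensate the bump in $y_2$, that shifted constraint can become violated, because the SIP only guarantees $x+z_{(x^-)'-\delta_N}+p_y((x^-)')\geq 0$ with no slack to absorb the reduction. The paper's $2\delta_N$ shift pushes $(x^-)'-\delta_N$ strictly to the right of $x^-$, so the $z$ coordinate that is decreased (at index $(x^-)'-2\delta_N$) is separated from the critical constraints at $(x^-)'-\delta_N$ and $(x^-)'$, and a Lipschitz estimate against the fixed slack $-x>0$ closes the argument.

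Second, your case split (``signs of $p_y((x^-)')$ and $p_y((x^+)')$'') is not the right invariant when $x<0$. The paper first derives from optimality of $x^\pm$ that $\lim_{t\downarrow x^-}x+z_\tau+p_y(t)=0$ (and the mirror at $x^+$), hence $p_y(x^\pm)\leq -x$, and then splits on whether $p_y(x^-)<-x$, $p_y(x^+)<-x$, or both equal $-x$. In the last case $p_y+ x$ has its two roots at $x^\pm$ and $\minparabel\in(x^-,x^+)$; here one cannot simply bump $y_2$ and a single $z_\tau$, because the constraints near $\minparabel$ have no $O(1)$ slack. The paper instead increases $z$ on a small window $L=[\minparabel-3\delta_N,\minparabel+2\delta_N]\cap T_N$ while decreasing $y_1$. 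Your proposal does not mention $\minparabel$ at all, so this branch is not covered.
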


We would like to remark, that the following proof works also for fixed $x<0$ and $\text{relint}(P)\neq \emptyset$.

\begin{proof}	
	As in the proof of Lemma \ref{Lem: SIP_has_nearby_MIP_solution_x>0}, we assume that $(x,x^-,x^+)\in \text{int}(P)$, $x^-,x^+\notin T_N$ and define $x',(x^-)',(x^+)'\in P$ with $(x^+)',(x^-)'\in T_N$ as follows
	\begin{align*}
		x' & \coloneqq a,\\
		(x^-)'&\coloneqq \min\{\bar{t}: \bar{t}\in T_N, \bar{t} > x^-+\delta_N\},\\ 
		(x^+)'&\coloneqq \max\{\bar{t}: \bar{t}\in T_N, \bar{t} < x^+-\delta_N\}.
	\end{align*} 
	Moreover, since $(x^-)',(x^+)'\in T_N$ we can again define $\tilde{b},\Delta^-,\Delta^+$ in a way, that the constraints \eqref{Constr: sum_Delta_leq_2}--\eqref{Constr: MIP_onedim_nonneg_yz} are satisfied. Hence, we continue by proving \eqref{Constr: discretized_dual_objective_geq0} -- \eqref{Constr: MIP_onedim_x-_special}:
	
	To this end, observe that 
	\begin{equation}\label{Eq: Lemma7_help}
		\lim_{t\downarrow x^-} x+z_{(x^-)'-2\delta_N} + p_y(t)=0
	\end{equation} 
	since otherwise $x^-$ would not be optimal for \eqref{Prob: dist_robust_chromatography_conic} as $x^-\pm \delta$ would be feasible for sufficiently small $\delta>0$. Similarly, we conclude $\lim_{t\uparrow x^+} x+z_{(x^+)'+\delta_N} + p_y(t)=0$ and obtain
	$$p_y(x^-) \leq -x \text{ and } p_y(x^+) \leq -x.$$
	We again distinguish between strict inequality for either $x^-$ or $x^+$ or equality.
	\smallskip
	
	Case 1.1: Consider $p_y(x^-)<-x$. 
	Then, for sufficiently small $\delta_N$, we have $z_{(x^-)'-2\delta_N}>y_5 \delta_N / \bound{+}{(x^-)'-2\delta_N}$. Now, we set $y'_2=y_2+y_5 \delta_N^2$ and $z'_{(x^-)'-2\delta_N}=z_{(x^-)'-2\delta_N}-y_5 \delta_N / \bound{+}{(x^-)'-\delta_N}$. Inserting these new values into \eqref{Constr: discretized_dual_objective_geq0} again does not alter \eqref{Constr: discretized_dual_objective_geq0} and we obtain:
	$\eqref{Constr: discretized_dual_objective_geq0} = \eqref{Constr: discretized_dual_objective_geq0} - y_5 \delta_N^2 + \delta_N^2 y_5 \geq b,$
	due to \eqref{Eq: dual_conic_constraint_SIP}. 	
	
	Moreover, \eqref{Constr: discretized_purity_strengthened} with $(x^-)',(x^+)',y',z'$ holds immediately for $\bar{t}\neq (x^-)'-2\delta_N$ due to \eqref{Eq: dual_conic_constraint_SIP}. If $\bar{t}= (x^-)'-2\delta_N$, we have:
	\begin{align*}
		\eqref{Constr: discretized_purity_strengthened} & = z_{(x^-)'-2\delta_N}-y_5 \delta_N / \bound{+}{(x^-)'-2\delta_N} +p_y((x^-)'-2\delta_N) + y_5\delta_N^2-y_5\delta_N^2\\
		& = z_{(x^-)'-2\delta_N}-y_5 \delta_N / \bound{+}{(x^-)'-2\delta_N} +p_y((x^-)'-2\delta_N)\\
		& \geq z_{(x^-)'-2\delta_N} +p_y(x^-)+a \overset{\eqref{Eq: Lemma7_help}}{=} 0.
	\end{align*}
	where the inequality holds since we have that $|p_y((x^-)'-2\delta_N)-p_y(x^-)|\leq -x - y_5 \delta_N / \bound{+}{(x^-)'-2\delta_N}$ if $\delta_N$ is sufficiently small.
	
	For \eqref{Constr: discretized_purity_strengthened_shifted}, we first suppose $\bar{t}\neq (x^-)'-2\delta_N$ and observe 
	$$x\tilde{b}_{\bar{t}+\delta_N} = x\mathbbm{1}_{[(x^-)'-\delta_N,(x^+)'+\delta_N]}(\bar{t}+\delta_N) \geq x\mathbbm{1}_{[x^-,x^+]}(\bar{t}+\delta_N)$$ since $[(x^-)'-\delta_N,(x^+)'+\delta_N] \subseteq [x^-,x^+].$ 
	Then, we conclude
	\begin{align*}
		\eqref{Constr: discretized_purity_strengthened_shifted} & \geq 
		x\mathbbm{1}_{[x^-,x^+]}(\bar{t}+\delta_N)+z_{\bar{t}} +p_y(\bar{t}+\delta_N) + y_5\delta_N^2-y_5\delta_N^2\\
		& = \lim_{\delta\uparrow \delta_N} x\mathbbm{1}_{[x^-,x^+]}(\bar{t}+\delta) +z_{\bar{t}} +p_y(\bar{t}+\delta), 
	\end{align*}
	which is nonnegative due to \eqref{Eq: dual_conic_constraint_SIP}. If $\bar{t}=(x^-)'-2\delta_N$, we have
	\begin{align*}
		\eqref{Constr: discretized_purity_strengthened_shifted} & = z_{(x^-)'-2\delta_N} - y_5 \delta_N / \bound{+}{(x^-)'-2\delta_N}  +p_y((x^-)'-\delta_N) + y_5\delta_N^2-y_5\delta_N^2\\
		& \geq z_{(x^-)'-2\delta_N} - y_5 \delta_N / \bound{+}{(x^-)'-2\delta_N} +p_y((x^-)'-\delta_N) \geq 0.
	\end{align*}
	where the nonnegativity holds since $|p_y((x^-)'-\delta_N)-p_y(x^-)|\leq -x - y_5 \delta_N / \bound{+}{(x^-)'-2\delta_N}$ if $\delta_N$ is sufficiently small.	
	
	We conclude further, that
	\eqref{Constr: MIP_onedim_x+_special} holds since
	\begin{align*}
		\eqref{Constr: MIP_onedim_x+_special} & =  z_{(x^+)'} + p_y((x^+)')\\
		& \geq x + z_{(x^+)'} + p_y((x^+)') \overset{\eqref{Eq: dual_conic_constraint_SIP}}{\geq} 0
	\end{align*}
	and lastly,
	\begin{align*}
		\eqref{Constr: MIP_onedim_x-_special} & = z_{(x^-)'-\delta_N} + p_y((x^-)')\\
		& \geq x + z_{(x^-)'-\delta_N} + p_y((x^-)')\\
		& = \lim_{\delta\downarrow 0} \mathbbm{1}_{[x^-,x^+]}((x^-)'-\delta) + z_{(x^-)'-\delta_N} + p_y((x^-)'-\delta) \overset{\eqref{Eq: dual_conic_constraint_SIP}}{\geq} 0
	\end{align*} 
	\smallskip
	
	Case 1.2: Consider $p_y(x^+)<-x$. Then, for sufficiently small $\delta_N$, we have $z_{(x^+)'+\delta_N} \geq -x -p_y(x^+) > y_5 \delta_N / \bound{+}{(x^+)'+\delta_N}$. Now, we set $y'_2=y_2+y_5 \delta_N^2$ and $z'_{(x^+)'+\delta_N}=z_{(x^+)'+\delta_N}-y_5 \delta_N / \bound{+}{(x^+)'+\delta_N}$. Again, inserting these new values into \eqref{Constr: discretized_dual_objective_geq0} gives:
	$$\eqref{Constr: discretized_dual_objective_geq0} - y_5 \delta_N^2 + \delta_N^2 y_5 \geq b,$$
	due to \eqref{Eq: dual_conic_constraint_SIP}. 
	
	Moreover, \eqref{Constr: discretized_purity_strengthened} with $(x^-)',(x^+)',y',z'$ holds immediately for $\bar{t}\neq (x^+)'+\delta_N$ due to \eqref{Eq: dual_conic_constraint_SIP}. If $\bar{t}= (x^+)'+\delta_N$, we have:
	\begin{align*}
		\eqref{Constr: discretized_purity_strengthened} = & x + z_{(x^+)'+\delta_N}-y_5 \delta_N / \bound{+}{(x^+)'+\delta_N} +p_y((x^+)'+\delta_N) + y_5\delta_N^2-y_5\delta_N^2 \\
		& = x - y_5 \delta_N / \bound{+}{(x^+)'+\delta_N} +z_{(x^+)'+\delta_N} +p_y((x^+)'+\delta_N)\\
		& \geq  x + z_{(x^+)'+\delta_N} +p_y(x^+) \geq 0,
	\end{align*}
	where the first inequality holds since we have that $|p_y((x^+)'+\delta_N)-p_y(x^+)|\leq -x - y_5 \delta_N / \bound{+}{(x^+)'+\delta_N}$ if $\delta_N$ is sufficiently small and the latter inequality holds due to
	\eqref{Eq: dual_conic_constraint_SIP}.
	
	Next, \eqref{Constr: discretized_purity_strengthened_shifted} holds for every $\bar{t}\neq (x^+)'+\delta_N$ since
	\begin{align*}
		\eqref{Constr: discretized_purity_strengthened_shifted} & = \lim_{\delta \uparrow \delta_N} x\mathbbm{1}_{[(x^-)',(x^+)']}(\bar{t}+\delta) + z_{\bar{t}} +p_y(\bar{t}+\delta) + y_5\delta_N^2-y_5\delta_N^2\\
		& \geq \lim_{\delta \uparrow \delta_N} x\mathbbm{1}_{[x^-,x^+]}(\bar{t}+\delta) +z_{\bar{t}} +p_y(\bar{t}+\delta) \geq 0,
	\end{align*}
	with \eqref{Eq: dual_conic_constraint_SIP}. 
	
	Consider $\bar{t}=(x^+)'+\delta_N$, we have
	\begin{align*}
		\eqref{Constr: discretized_purity_strengthened_shifted} & = z_{(x^+)'+\delta_N} - y_5 \delta_N / \bound{+}{(x^+)'+\delta_N} + p_y((x^+)'+2\delta_N) + y_5\delta_N^2-y_5\delta_N^2\\
		& = z_{(x^+)'+\delta_N} - y_5 \delta_N / \bound{+}{(x^+)'} +p_y((x^+)'+2\delta_N)\\
		& \geq z_{(x^+)'+\delta_N} - y_5 \delta_N / \bound{+}{(x^+)'+\delta_N} + p_y(x^+) -L_y\delta_N\\
		& \geq -x - y_5 \delta_N / \bound{+}{(x^+)'+\delta_N} - 2L_y\delta_N,
	\end{align*}
	where $L_y$ denotes the Lipschitz constant of $p_y$ and thus for sufficiently small $\delta_N$ the term is nonnegative. 
	
	We conclude further, that \eqref{Constr: MIP_onedim_x+_special} holds immediately since
	\begin{align*}
		\eqref{Constr: MIP_onedim_x+_special} & = z_{(x^+)'} + p_y((x^+)') + y_5\delta_N^2 - y_5\delta_N^2 \geq x + z_{(x^+)'} + p_y((x^+)') \overset{\eqref{Eq: dual_conic_constraint_SIP}}{\geq} 0
	\end{align*}
	and lastly,
	\begin{align*}
		\eqref{Constr: MIP_onedim_x-_special} & =  z_{(x^-)'-\delta_N} + p_y((x^-)') \geq x + z_{(x^-)'-\delta_N} + p_y((x^-)')\\
		& = \lim_{\delta\downarrow 0} x + z_{(x^-)'-\delta_N} + p_y((x^-)'-\delta) \overset{\eqref{Eq: dual_conic_constraint_SIP}}{\geq} 0.
	\end{align*}
	\smallskip	
	
	Case 2: Suppose $p_y(x^-)=p_y(x^+)=-x$. On the one hand, if $p_y=-x$, we have that $y_5=0$ and thus
	\eqref{Constr: discretized_dual_objective_geq0}--\eqref{Constr: MIP_onedim_x-_special} are satisfied immediately with $y'\coloneqq y$ and $z'\coloneqq z$.
	
	On the other hand, if $p_y \neq -x$, we have that $\frac{\partial}{\partial t} p_y(x^-)<0$ and $\frac{\partial}{\partial t} p_y(x^+)>0$ as these are the only sign changes in the quadratic polynomial $p_y$. Hence, $p_y((x^-)'-\delta_N), p_y((x^+)'+\delta_N)>0$ and in particular, $\minparabel\in (x^-,x^+)$. Solely setting $y_1'\coloneqq y_1+y_5\delta_N^2$ may violate one of the constraints \eqref{Constr: discretized_dual_objective_geq0}--\eqref{Constr: MIP_onedim_x-_special}, particularly at the points around $\minparabel$. Thus, we also consider 
	$$L\coloneqq [\minparabel-3\delta_N,\minparabel+2\delta_N]\cap T_N$$	
	and set $z_{\bar{t}}'\coloneqq z_{\bar{t}}+\frac{1}{|L|}y_5\delta_N/\bound{+}{\bar{t}}$ for every $\bar{t}\in L$. As in the previous cases, we immediately obtain the validity of \eqref{Constr: discretized_dual_objective_geq0}. For \eqref{Constr: discretized_purity_strengthened} with $\bar{t}\notin L$ and $\bar{t}\in L$ with $\bar{t}\leq \minparabel-2\delta_N$, we obtain
	\begin{align*}
		\eqref{Constr: discretized_purity_strengthened} & = x\tilde{b}_{\bar{t}} + z_{\bar{t}}' +p_{y'}(\bar{t}) - y_5\delta_N^2\\
		& = x\mathbbm{1}_{[(x^-)',(x^+)']}(\bar{t}) + z_{\bar{t}} + p_y(\bar{t}) - 2y_5\delta_N^2 \\
		& = x\mathbbm{1}_{[(x^-)',(x^+)']}(\bar{t}) + p_y(\bar{t}) - 2y_5\delta_N^2 \eqqcolon (*).
	\end{align*}	
	We continue by estimating
	\begin{align*}
		(*) & \geq x + p_y(\bar{t}) -p_y(\minparabel) + p_y(\minparabel) - 2y_5\delta_N^2 \\
		& \geq x + p_y(\minparabel-2\delta_N) -p_y(\minparabel) + p_y(\minparabel) - 2y_5\delta_N^2 \\
		& \geq x + 2(p_y(\minparabel-\delta_N) -p_y(\minparabel)) + p_y(\minparabel) - 2y_5\delta_N^2 \\
		& \overset{\text{Proof of Lemma \ref{Lemma: discretized_inner_approx}}}{\geq} x + 2y_5\delta_N^2 + p_y(\minparabel) - 2y_5\delta_N^2 \overset{\eqref{Eq: dual_conic_constraint_SIP}}{\geq} 0.
	\end{align*}
	For the remaining case $\bar{t}\in L$, we have
	$$\eqref{Constr: discretized_purity_strengthened} = x + z_{\bar{t}} + \frac{1}{|L|}y_5\delta_N/ \bound{+}{\bar{t}} + p_{y}(\bar{t}) -2y_5\delta_N^2 \geq x + z_{\bar{t}} + p_{y}(\bar{t}) \overset{\eqref{Eq: dual_conic_constraint_SIP}}{\geq} 0,$$
	where the first inequality holds for sufficiently small $\delta_N$.
	
	Similarly, we show that $y',z'$ satisfy \eqref{Constr: discretized_purity_strengthened_shifted}:
	$$\eqref{Constr: discretized_purity_strengthened_shifted} \geq x\mathbbm{1}_{[(x^-)',(x^+)']}(\bar{t}+\delta_N) + p_y(\bar{t}+\delta_N)-2y_5\delta_N^2,$$
	which equals $(*)$ if $\bar{t}+\delta_N\notin L$ and is thus nonnegative in this case. The same argument also holds if $\bar{t}+\delta_N \in L$ but $\bar{t}\notin L$ since then $\bar{t}+\delta_N \leq \minparabel-2\delta_N$. Hence, we now consider the remaining case, where $\bar{t},\bar{t}+\delta_N \in L$:
	\begin{align*}
		\eqref{Constr: discretized_purity_strengthened_shifted} & = x\tilde{b}_{\bar{t}+\delta_N} + z_{\bar{t}} + \frac{1}{|L|}y_5\delta_N/\bound{+}{\bar{t}} + p_y(\bar{t}+\delta_N)-2y_5\delta_N^2 \geq 0
	\end{align*}
	since on the one hand $x\tilde{b}_{\bar{t}+\delta_N} + z_{\bar{t}} + p_y(\bar{t}+\delta_N) = \lim_{\delta\uparrow \delta_N} x + z_{\bar{t}} + p_y(\bar{t}+\delta) \overset{\eqref{Eq: dual_conic_constraint_SIP}}{\geq} 0$ and on the other hand $\frac{1}{|L|}y_5\delta_N/\bound{+}{\bar{t}}-2y_5\delta_N^2 \geq 0$ for sufficiently small $\delta_N$.
	
	Moreover, since $a \leq -2y_5\delta_N^2$ for sufficiently small $\delta_N$, we have
	\begin{align*}
		\eqref{Constr: MIP_onedim_x+_special} & = z_{(x^+)'} + p_y((x^+)') - 2y_5\delta_N^2 \geq x + z_{(x^+)'} + p_y((x^+)') \overset{\eqref{Eq: dual_conic_constraint_SIP}}{\geq} 0.
	\end{align*}  
	and also with $a \leq -2y_5\delta_N^2$, we obtain	
	\begin{align*}
		\eqref{Constr: MIP_onedim_x-_special} & = z_{(x^-)'-\delta_N} + p_y((x^-)') - 2y_5\delta_N^2 \geq x + z_{(x^-)'-\delta_N} + p_y((x^-)')\\
		& = \lim_{\delta\downarrow 0} x + z_{(x^-)'-\delta_N} + p_y((x^-)'-\delta) \overset{\eqref{Eq: dual_conic_constraint_SIP}}{\geq} 0.
	\end{align*}
	Finally, the objective value of our adjusted solution 
	$x',(x^-)',(x^+)',y',z'$ satisfies 
	\begin{align*}
		c(x',(x^-)',(x^+)',y',z') & \geq c(x,x^-,x^+,y,z) - 2\|c\|_\infty \delta_N - 2\|c\|_\infty \delta_N\\
		& = c(x,x^-,x^+,y,z) - 4\|c\|_\infty \delta_N.
	\end{align*}
\end{proof}

\begin{theorem}\label{Thm: convergence}
	Suppose every optimal solution to \eqref{Prob: dist_robust_chromatography_conic} satisfies $x^-<x^+$. If $\varepsilon_N \downarrow 0$ and $\delta_N \downarrow 0$ as $N \rightarrow \infty$, then any accumulation point of a sequence $\{(x^-,x^+,y,z)_N\}$, of  $\varepsilon_N$-optimal solutions of the discretized problems \eqref{Prob: MIP_onedim}, is an optimal solution of the problem \eqref{Prob: dist_robust_chromatography_conic}.
\end{theorem}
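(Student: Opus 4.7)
The plan is to adapt the classical convergence argument from semi-infinite programming (cf.\ Lemma 6.1 in \cite{Shapiro2009a}) to our \emph{inner}-approximation setting. Let $v^\star$ denote the optimum of the SIP \eqref{Prob: dist_robust_chromatography_conic} and $v_N$ the optimum of the $N$-th discretised MIP \eqref{Prob: MIP_onedim}. My strategy is first to sandwich $v_N$ between $v^\star-O(\delta_N)$ and $v^\star$, and then to verify that any accumulation point of the $\varepsilon_N$-optima lies in the SIP feasible set and attains value $v^\star$.

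For the sandwich, the inequality $v_N \le v^\star$ is immediate from Theorem \ref{Thm: MIP_onedim}: every MIP-feasible tuple satisfies the semi-infinite constraint \eqref{Eq: dual_conic_constraint_SIP}, all remaining constraints of the two programs coincide, and the objective is identical, so any MIP-feasible point is SIP-feasible with the same value. The matching lower bound $v_N \ge v^\star - 4\|c\|_\infty\,\delta_N$ is precisely the content of Lemmas \ref{Lem: SIP_has_nearby_MIP_solution_x>0} and \ref{Lem: SIP_has_nearby_MIP_solution_x<0} applied to an optimal SIP solution; the borderline case $x=0$ can be absorbed by a $\delta_N$-perturbation into either regime. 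Combining these with $\varepsilon_N \downarrow 0$, the objective of an $\varepsilon_N$-optimal MIP solution satisfies $v_N-\varepsilon_N \ge v^\star - 4\|c\|_\infty\,\delta_N - \varepsilon_N$ and therefore converges to $v^\star$.

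For the accumulation point $(x^\star,(x^-)^\star,(x^+)^\star,y^\star,z^\star)$, I would view each $z_N \in \R_{\ge 0}^{T_N}$ as the atomic nonnegative Radon measure $\mu_N = \sum_{\tau \in T_N} z_{\tau,N}\,\delta_\tau$ on $T$ and interpret accumulation in the weak-$*$ sense on $\mathcal{M}(T)$; since $x,x^-,x^+,y$ live in fixed finite-dimensional spaces and are bounded by $P$ and the inequality \eqref{Eq: dual_conic_obj2}, extraction of a convergent subsequence is standard. The polytope constraint \eqref{Constr: dist_rob_conic_a_in_polytope} and the non-negativity \eqref{Constr: dist_robust_chromatic_conic_yz_nonneg} are closed, and the linear bound \eqref{Eq: dual_conic_obj2} passes to the limit by continuity, after which continuity of the linear objective $c$ forces its value at the limit to equal $\lim_N v_N = v^\star$.

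The main obstacle is passing to the limit in the semi-infinite constraint \eqref{Eq: dual_conic_constraint_SIP}, whose $z$-dependent sum has both its coefficients and its support mesh changing with $N$, and whose indicator term $x\mathbbm{1}^c_{[x^-,x^+]}(t)$ depends on the converging endpoints only in a pointwise fashion at the jump locations. Here the hypothesis $(x^-)^\star < (x^+)^\star$ at every SIP optimum is exactly what rules out degenerate vanishing-interval configurations, so that for all sufficiently large $N$ the Urysohn approximators in \eqref{Eq: Urysohn_approx} can be chosen uniformly close to the true indicators on a common interval. The Lipschitz estimates on the quadratic $p_y$ used throughout Section~\ref{Subsec: discretized_DRO_reformulation} (as in Lemma~\ref{Lemma: refinement_main} and the proofs of Lemmas \ref{Lem: SIP_has_nearby_MIP_solution_x>0} and \ref{Lem: SIP_has_nearby_MIP_solution_x<0}) then propagate the node-wise feasibility of the MIPs to pointwise feasibility of the limit on all of $T$, yielding SIP-optimality of every accumulation point.
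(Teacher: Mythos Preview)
Your overall strategy coincides with the paper's: the sandwich $v^\star - 4\|c\|_\infty\delta_N \le v_N \le v^\star$ via Theorem~\ref{Thm: MIP_onedim} (upper bound) and Lemmas~\ref{Lem: SIP_has_nearby_MIP_solution_x>0}--\ref{Lem: SIP_has_nearby_MIP_solution_x<0} (lower bound), followed by feasibility of the accumulation point and continuity of the objective, is exactly what the paper does.

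Where you diverge is in the feasibility-of-the-limit step, and here you over-complicate matters. Your ``main obstacle''---propagating \emph{node-wise} feasibility to \emph{pointwise} feasibility in \eqref{Eq: dual_conic_constraint_SIP}---is not actually an obstacle: Theorem~\ref{Thm: MIP_onedim} already asserts that every MIP-feasible tuple satisfies \eqref{Constr: dual_purity}, i.e.\ \eqref{Eq: dual_conic_constraint_SIP}, for \emph{all} $t\in T$, not merely on $T_N$. So each term of your sequence is already SIP-feasible in the full semi-infinite sense, and the Lipschitz machinery you invoke to bridge nodes to the continuum is unnecessary. The paper accordingly dispatches this step in one line: ``since every considered function is continuous, this implies immediately that also the accumulation point satisfies \eqref{Eq: dual_conic_obj2}--\eqref{Constr: dist_robust_chromatic_conic_yz_nonneg}.''

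That said, you are right to flag the varying dimension of $z\in\R^{T_N}$ as a genuine ambiguity in what ``accumulation point'' means; the paper's continuity assertion silently assumes this is unproblematic. Your weak-$*$ embedding of $z_N$ into $\mathcal{M}(T)$ is a reasonable way to make this precise, though it then leaves open in what sense the limit is feasible for \eqref{Prob: dist_robust_chromatography_conic}, a problem that itself carries an $N$-dependent $z$-space. The paper does not address this either, so on this point you are being more careful than the source, not less---but your proposed resolution is not fully worked out.
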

We note, that for $x>0,b>0$ and non-Dirac measures, the assumption $x^-<x^+$ is a direct implication of \eqref{Constr: Objective_Primal_Purity_Constraint}.

\begin{proof}
	Proof Let $\overline{(x,x^-,x^+,y,z)}$ be an accumulation point of the sequence $\{(x,x^-,x^+,y,z)_N\}$. By passing to a subsequence if necessary, we can assume that $(x,x^-,x^+,y,z)_N \rightarrow \overline{(x,x^-,x^+,y,z)}$. Note, that Theorem \ref{Thm: MIP_onedim} implies, that $(x,x^-,x^+,y,z)_N$ is feasible for \eqref{Prob: dist_robust_chromatography_conic} and thus satisfies \eqref{Eq: dual_conic_obj2} -- \eqref{Constr: dist_robust_chromatic_conic_yz_nonneg}. Since every considered function is continuous, this implies immediately, that also the accumulation point $\overline{(x,x^-,x^+,y,z)}$ satisfies \eqref{Eq: dual_conic_obj2} -- \eqref{Constr: dist_robust_chromatic_conic_yz_nonneg}.
	
	Let us now consider an arbitrary optimal solution $(x,x^-,x^+,y,z)$ to \eqref{Prob: dist_robust_chromatography_conic}. 
	Lemmas \ref{Lem: SIP_has_nearby_MIP_solution_x>0} and \ref{Lem: SIP_has_nearby_MIP_solution_x<0} show that, for sufficiently small $\delta_N$, there exists a solution $(x',(x^-)',(x^+)',y',z')_N$ to the discretized program such that $c((x,x^-,x^+,y,z)_N')\geq c((x,x^-,x^+,y,z)) - 4\|c\|_\infty\delta_N$. Moreover, since $(x,x^-,x^+,y,z)_N$ was $\varepsilon_N$-optimal, we have $c((x,x^-,x^+,y,z)_N)+\varepsilon_N \geq c((x,x^-,x^+,y,z)_N')$ and thus combining these statements leads to:
	\begin{align*}
		c((x,x^-,x^+,y,z)) - 4\|c\|_\infty\delta_N & \leq c((x,x^-,x^+,y,z)_N')\\
		& \leq c((x,x^-,x^+,y,z)_N)+\varepsilon_N\\ 
		& \leq c((x,x^-,x^+,y,z))+\varepsilon_N
	\end{align*}
	Let now $N\rightarrow \infty$ and consequently $\delta_N,\varepsilon_N\rightarrow 0$, then we conclude
	$$c((x,x^-,x^+,y,z))= c(\overline{(x,x^-,x^+,y,z)}).$$
	Since $(x,x^-,x^+,y,z)$ was an optimal solution to \eqref{Prob: dist_robust_chromatography_conic}, we have that $\overline{(x,x^-,x^+,y,z)}$ is optimal for \eqref{Prob: dist_robust_chromatography_conic}.
\end{proof}

Theorem \ref{Thm: convergence} indicates, that the inner approximation given by Theorem \ref{Thm: MIP_onedim} may not be too conservative. In the remainder of this paper, we provide numerical evidence at the example of particle separation processes, that further supports this intuition.
\FRil{Wenn man aus $\P_0$ zunächst ein Maß konstruiert, das zulässig ist für das restriktivere Innere, also das für die optimistic approximation, kriegt man, dass safe und optimistic inner problem vom Lösungswert her gegeneinander Konvergieren - auf Kosten von noch ekligeren Formeln - interessiert uns das noch?}

\subsection{Optimistic approximation}
\FRil{TODO: Brauchen wir (vermutlich) nicht mehr. Löschen, sobald alle TODOs erfolgreich erledigt sind.}
\FLil{was bedeutet hier optimistic? Muesste gesagt werden. Das Modell (22) kann man ja vmtl. gut interpretieren: die $w_{\tau}$ ergeben zusammen Wert 1, dh es soll eine gute Aufteilung gefunden werden, so dass...}
\FRil{Man kann aber kurz zeigen, dass das Problem durch geeignete Wahl von $I(a, x^-, x^+)$ so aufstellbar ist, dass es für $x^-, x^+$ in einem Abschnitt konstant ist - also stellen sich die Werte von $x^-, x^+$ immer auf den Abschnittsgrenzen ein. Das macht das MIP-Reformulieren später sehr viel einfacher. Weiß aber nicht ob wir das noch wollen.}
We get an optimistic approximation of Problem~\eqref{problem:basic2}
by restricting feasible $\P$ in Problem~\eqref{problem:inner} to measures that have a density that is a multiple of $\bar{\rho}$ on intervals $[\tau, \tau + \delta_N)$ for all $\tau \in T_N$.
This leads to a tightening of the inner problem~\eqref{problem:inner}.
The following optimization problem is defined over variables $w_\tau \geq 0$ that denote the mass of $\P$ on the interval $[\tau, \tau + \delta_N)$, i.e., they encode probability measures with density
\begin{equation*}
	\rho(t) = \frac{w_\tau \bar{\rho}(t)}{\hat{\rho}^+_N(t)} \text{ for all } t\in [\tau, \tau + \delta_N), \tau \in T_N,
\end{equation*}
with
\begin{equation*}
	\hat{\rho}^+_N(\tau) := \int_{\tau}^{\tau + \delta_N} \bar{\rho}(t) dt,\\
\end{equation*}
We define
\begin{align*}
	e_\tau := \frac{1}{\bound{+}{\tau}} \int_{\tau}^{\tau + \delta_N} t \bar{\rho}(t) dt,\\
	v_\tau := \frac{1}{\bound{+}{\tau}} \int_{\tau}^{\tau + \delta_N} t^2 - 2\mu t \bar{\rho}(t) dt,
\end{align*}
and note that for the measure $\P_w$ corresponding to $w$, the expectation evaluates to
\begin{equation*}
	\E_{\P_w}(t) = \int_T t d\P_w = \sum_{\tau \in T_N} \int_{\tau}^{\tau + \delta_N} t \frac{w_\tau \bar{\rho}(t)}{\hat{\rho}^+_N(t)} dt = \sum_{\tau \in T_N} e_\tau w_\tau.
\end{equation*}
The quadratic deviation evaluates to
\begin{equation*}
	\E_{\P_w}(t^2 - 2\mu t) = \int_T (t^2 - 2\mu t) d\P_w = \sum_{\tau \in T_N} \int_{\tau}^{\tau + \delta_N} (t^2 - 2\mu t) \frac{w_\tau \bar{\rho}(t)}{\hat{\rho}^+_N(t)} dt = \sum_{\tau \in T_N} v_\tau w_\tau.
\end{equation*}

We further define the Intervals $I(a, x^-, x^+) := [x^- -\delta_N, x^+ + \delta_N]$ if $a > 0$ and $I(a, x^-, x^+) := (x^- +\delta_N, x^+ - \delta_N)$ if $a < 0$.
For $a > 0$ we calculate that
\begin{align*}
	\int_T a\mathbbm{1}_{[x^-, x^+]} d\P(t) \leq \int_T a\mathbbm{1}_{[\bar{\tau}^-, \bar{\tau}^+ + \delta_N)} d\P(t) \\= \sum_{\tau \in T_N} \mathbbm{1}_{I(a, x^-, x^+)}(\tau) \int_{[\tau, \tau + \delta_N)} 1 d\P(t) = \sum_{\tau \in T_N} \mathbbm{1}_{I(a, x^-, x^+)}(\tau) w_\tau,
\end{align*}
with $\bar{\tau}^- := \inf T_N \cap I(a, x^-, x^+) \leq x^-$ and $\bar{\tau}^+ := \sup T_N \cap I(a, x^-, x^+) \geq x^+$.
For $a<0$, the argument is analogue.
Hence, the objective function of Problem~\eqref{problem:inner} is overestimated by the expression $ \sum_{\tau \in T_N} \mathbbm{1}_{I(a, x^-, x^+)}(\tau) w_\tau $.

Combining the observations above, we can set up the following tightening of Problem~\eqref{problem:inner}:
\begin{subequations}
	\label{Prob: Primal_Purity_Constraint_envelope_disc}
	\begin{align}
		\min_{w \geq 0}~&  a \sum_{\tau \in T_N} \mathbbm{1}_{I(a, x^-, x^+)}(\tau) w_\tau && \label{Constr: Objective_Primal_Purity_Constraint_disc}\\
		\text{s.t.}~&\sum_{\tau \in T_N} w_\tau \geq 1 \label{Constr: Primal_Purity_Constraint_envelope1_disc}\\
		&\sum_{\tau \in T_N} -w_\tau \geq -1 \\
		& \sum_{\tau \in T_N} -e_\tau w_\tau \geq -\mu_{+},\label{Constr: First_Moment1_disc} \\
		& \sum_{\tau \in T_N} e_\tau w_\tau \ge \mu_{-}, \label{Constr: First_Moment2_disc}\\
		&\sum_{\tau \in T_N} -v_\tau w_\tau \geq - \sigma_+ + \mu_+\mu_-  \label{Constr: Second_Moment_disc}\\
		& - w_\tau \geq -\bound{+}{\tau} && \text{ for all } \tau \in T_N. \label{Constr: envelope_discretized_disc}
	\end{align}
\end{subequations}

We dualize \eqref{Prob: Primal_Purity_Constraint_envelope_disc}, with dual variables $y_1, ..., y_5$ for constraints \eqref{Constr: Primal_Purity_Constraint_envelope1_disc} to \eqref{Constr: Second_Moment_disc}, and $z_\tau$ for constraint \eqref{Constr: envelope_discretized_disc}.
Analogue to Section~\ref{sec:safe_approx}, we define
\begin{equation*}
	q_y(\tau) := - y_{1} + y_{2}+ e_\tau y_{3} - e_\tau y_{4} + v_\tau y_{5}
\end{equation*}
for all $\tau \in T_N$, and
we obtain the following problem:
\begin{subequations}
	\label{Prob: Dual_Purity_Constraint_disc}
	\begin{align}
		\max_{y \in \mathbb{R}^{5}_{\ge 0}, z \in \mathbb{R}_{\ge 0}^{T_N}}& y_1 - y_2 -\mu_{+} y_3 + \mu_{-} y_4 +  (- \sigma_+ +\mu_+\mu_-) y_5 - \sum_{\tau \in T_N} \bound{+}{\tau} \boundvariable{\tau} \\
		\st\ & a\mathbbm{1}_{I(a, x^-, x^+)}(\tau) + \boundvariable{\tau} + q_y(\tau) \geq 0 \text{ for all }\tau \in T_N.\label{Constr: dual_purity_disc}
	\end{align}
\end{subequations}
In the next section, we demonstrate how Problems~\eqref{Prob: Dual_Purity_Constraint_ref} and \eqref{Prob: Dual_Purity_Constraint_disc} can be reformulated as MIPs and incorporated into the overall optimization problem, leading to tractable conservative and an optimistic approximations of Problem~\eqref{problem:basic2}. It will turn out that the obtained upper and lower bounds are quite close together, showing the high quality of the obtained safe approximation. 

We note that the system
\begin{subequations}\label{24}
	\begin{align}
		\label{24a}& \sum_{\tau\in T_N} \lift{\tau}{-} = 1\\
		\label{24b}& \sum_{\tau\in T_N} \lift{\tau}{+} = 1\\
		\label{24c}& x^- = \sum_{\tau\in T_N} \tau \lift{\tau}{-}\\
		\label{24d}& x^+ = \sum_{\tau\in T_N} \tau \lift{\tau}{+}\\
		\label{24e}&\Delta^-,\Delta^+ \in \{0,1\}^{T_N}
	\end{align}
\end{subequations}
ascertains that $x^-, x^+ \in T_N$. In this case, it has a unique solution, and for this solution it holds that $\Delta^-_\tau = 1$ iff $x^- = \tau$ and $\Delta^+_\tau = 1$ iff $x^+ = \tau$.
Next, we set up two systems that encode $\mathbbm{1}_{I(a, x^-, x^+)}(\tau)$ with binary variables. We have to distinguish between $a > 0$ and $a < 0$.

Assuming that $x, \Delta$ fulfills System~\eqref{24}, and $a > 0$, the system
\begin{subequations}\label{25}
	\begin{align}
		\label{25a}& \tilde{b}_{\tau} \leq \sum_{t \in T_N, t < \tau} \Delta^-_t && \forall \tau\in T_N\\
		\label{25b}& \tilde{b}_{\tau} \leq \sum_{t \in T_N, t > \tau} \Delta^+_t && \forall \tau\in T_N\\
		& \tilde{b}\in \{0,1\}^{T_N}&&
	\end{align}
\end{subequations}
ascertains that $b_\tau = 0$ if $\mathbbm{1}_{I(a, x^-, x^+)}(\tau) = 0$ -
the right-hand side of \eqref{25a} evaluates to $0$ if and only if $\tau \leq x^-$, restricting $\tilde{b}_\tau$ to $0$, and the right-hand side of \eqref{25b} evaluates to $0$ if and only if $\tau \geq x^+$, restricting $\tilde{b}_\tau$ to $0$ as well.
We further note that it has a solution for which $b_\tau = 1$ if $\mathbbm{1}_{I(a, x^-, x^+)}(\tau) = 1$.

If $a < 0$, the system
\begin{subequations}\label{26}
	\begin{align}
		\label{26a}& \tilde{b}_{\tau} \geq \sum_{t \in T_N, t \leq \tau +\delta_N} \Delta^-_t + \sum_{t \in T_N, t \geq \tau - \delta_N} \Delta^+_t - 1 && \forall \tau\in T_N\\
		& \tilde{b} \in \{0,1\}^{T_N}&&
	\end{align}
\end{subequations}
ascertains that $b_\tau = 1$ if $\mathbbm{1}_{I(a, x^-, x^+)}(\tau) = 1$ -
the right-hand side of \eqref{26a} evaluates to $1$ if and only if $x^- - \delta_N \leq \tau \leq x^+ + \delta_N$, restricting $\tilde{b}_\tau$ to $1$.
We also note that this system has a solution for which $\tilde{b}_\tau = 0$ if $\mathbbm{1}_{I(a, x^-, x^+)}(\tau) = 0$.
\FRil{Ist das so verständlich? Die Idee dabei ist, dass später im problem die $\tilde{b}$ sowieso eine motivation haben hoch/niedrig zu sein (abhängig vom Vorzeichen von $a$), deswegen muss man sie nur auf $0$/$1$ zwingen.}
\FRil{@Jan, Frauke: Bitte prüft nochmal, ob ich einen Denkfehler gemacht habe.}

\subsection{MIP reformulation of the optimistic approximation}
\FRil{TODO: Brauchen wir (vermutlich) nicht mehr. Löschen, sobald alle TODOs erfolgreich erledigt sind.}
We now present the MIP that models the optimistic approximation of Problem~\eqref{problem:basic2}.
We note that, as in the prior section, the optimistic approximation of the inner problem is MIP-representable if we can replace $\mathbbm{1}_{I(a, x^-, x^+)}(\tau)$ by binary variables $\tilde{b}_\tau$ that encode $\mathbbm{1}_{I(a, x^-, x^+)}(\tau)$ appropriately for all feasible $x^-, x^+ \in T_N$.
We note that the system
\begin{subequations}
	\begin{align}
		& \sum_{\tau\in T_N} \lift{\tau}{-} = 1\\
		& \sum_{\tau\in T_N} \lift{\tau}{+} = 1\\
		& \sum_{\tau\in T_N} \tau \lift{\tau}{-} \leq x^- \leq \sum_{\tau\in T_N} (\tau + \delta_N) \lift{\tau}{-}\\
		& \sum_{\tau\in T_N} \tau \lift{\tau}{+} \leq x^+ \leq \sum_{\tau\in T_N} (\tau + \delta_N) \lift{\tau}{+}\\
		&\Delta^-,\Delta^+ \in \{0,1\}^{T_N}
	\end{align}
\end{subequations}
ascertains that $\Delta^-_\tau = 1$ implies that $x^- \in [\tau, \tau + \delta_N]$ and $\Delta^+_\tau = 1$ implies that $x^+ \in [\tau, \tau + \delta_N]$.
Next, we set up two systems that encode $\mathbbm{1}_{I(a, x^-, x^+)}(\tau)$ with binary variables. We have to distinguish between $a > 0$ and $a < 0$.

Assuming that $x, \Delta$ fulfills System~\eqref{28}, and $a > 0$, the system
\begin{subequations}
	\begin{align}
		& \tilde{b}_{\tau} \leq \sum_{t \in T_N, t \leq \tau} \Delta^-_t && \forall \tau\in T_N\\
		& \tilde{b}_{\tau} \leq \sum_{t \in T_N, t \geq \tau - \delta_N} \Delta^+_t && \forall \tau\in T_N\\
		& \tilde{b}\in \{0,1\}^{T_N}&&
	\end{align}
\end{subequations}
ascertains that $b_\tau = 0$ if $\mathbbm{1}_{I(a, x^-, x^+)}(\tau) = 0$ -
the right-hand side of \eqref{29a} evaluates to $0$ if and only if $\tau < x^- - \delta_N$, restricting $\tilde{b}_\tau$ to $0$, and the right-hand side of \eqref{29b} evaluates to $0$ if and only if $\tau > x^+ + \delta_N$, restricting $\tilde{b}_\tau$ to $0$ as well.
We further note that it has a solution for which $b_\tau = 1$ if $\mathbbm{1}_{I(a, x^-, x^+)}(\tau) = 1$.

If $a < 0$, the system
\begin{subequations}
	\begin{align}
		& \tilde{b}_{\tau} \geq \sum_{t \in T_N, t \leq \tau -2\delta_N} \Delta^-_t + \sum_{t \in T_N, t \geq \tau + \delta_N} \Delta^+_t - 1 && \forall \tau\in T_N\\
		& \tilde{b} \in \{0,1\}^{T_N}&&
	\end{align}
\end{subequations}
ascertains that $b_\tau = 1$ if $\mathbbm{1}_{I(a, x^-, x^+)}(\tau) = 1$ -
the right-hand side of \eqref{30a} evaluates to $1$ if and only if $x^- + \delta_N < \tau < x^+ - \delta_N$, restricting $\tilde{b}_\tau$ to $1$.
We also note that this system has a solution for which $\tilde{b}_\tau = 0$ if $\mathbbm{1}_{I(a, x^-, x^+)}(\tau) = 0$.
\FRil{Ist das so verständlich? Die Idee dabei ist, dass später im problem die $\tilde{b}$ sowieso eine motivation haben hoch/niedrig zu sein (abhängig vom Vorzeichen von $a$), deswegen muss man sie nur auf $0$/$1$ zwingen.}
\FRil{@Jan, Frauke: Bitte prüft nochmal, ob ich einen Denkfehler gemacht habe.}

With the help of Systems~\eqref{28}, \eqref{29} and \eqref{30}, we can set up a MIP representation for our optimistic approximation of Problem~\eqref{problem:basic2}. This is the content of the next theorem.

\begin{theorem}\label{Thm: MIP_onedim_optim}
	The feasible set of the MIP
	\begin{subequations}\label{Prob: MIP_onedim_optim}
		\begin{align}
			\max_{}~ & c^\top (x^-,x^+)^\top \\
			\text{s.t.}~ & \sum_{s \in S} \Big( \langle (1,-1, -\mu_+^s, \mu_-^s,-\sigma_+^s + \mu_+^s \mu_-^s),y^s\rangle &&\hspace{-.3cm}- \sum_{\tau\in T_N^s} (\bar{\rho}^+_N)^s(\tau) \boundvariable{\tau}^s \Big) \geq b \label{Constr: discretized_dual_objective_geq0_optim}\\
			& a^s \tilde{b}_{\tau}^s + z_{\tau}^s + q^s_{y^s}(\tau) \geq 0 && \forall \tau\in T_N^s, s\in S \label{Constr: discretized_purity_strengthened_optim}\\
			\label{31e}&\eqref{28}^s&&\forall s \in S\\
			&\eqref{29}^s&&\forall s \in S\colon a^s > 0\\
			\label{31g}&\eqref{30}^s&&\forall s \in S\colon a^s < 0\\
			& (x^-,x^+) \in C \label{Constr: x^-_x^+_in_P_optim}\\
			& y^s \in \mathbb{R}^{5}_{\ge 0}, z^s \in \mathbb{R}_{\ge 0}^{T_N^s} && \forall s \in S. \label{Constr: MIP_onedim_nonneg_yz_optim}
		\end{align}
	\end{subequations}
	projected to $(x^-, x^+)$ is a superset of the feasible set of Problem~\eqref{problem:basic2}, and \eqref{Prob: MIP_onedim} is an optimistic approximation of \eqref{problem:basic2}.
\end{theorem}

\begin{proof}
	We consider $x$ is feasible for Problem~\eqref{problem:basic2}, and construct $(\tilde{b}, \Delta, y, z)$ such that $(x, \tilde{b}, \Delta, y, z)$ is feasible for \eqref{Prob: MIP_onedim_optim}.
	As the objective functions of the two problems are identical, this implies directly that the latter is an optimistic approximation of the prior.
	
	As $x$ is feasible for Problem~\eqref{problem:basic2}, we know that the sum of optimal values of the inner problems~\eqref{problem:inner} over $s\in S$ is at least $b$.
	As the dual of a tightening of the inner problem~\eqref{problem:inner}, Problem~\eqref{Prob: Dual_Purity_Constraint_disc} has a higher optimal value.
	Hence, we choose $y^s, z^s$ such that $(x, y^s, z^s)$ is a solution to $\eqref{Prob: Dual_Purity_Constraint_disc}^s$, which guarantees that our solution fulfills \eqref{Constr: discretized_dual_objective_geq0_optim}.
	We further choose $\tilde{b}, \Delta$ such that $\tilde{b}_\tau^s$ encodes $\mathbbm{1}_{I(a^s, x^-, x^+)}(\tau)$ for all $\tau \in T_N^s, s\in S$. This ensures that Constraints~\eqref{Constr: discretized_purity_strengthened_optim} to \eqref{31g} hold.
	
	Hence, $(x, \tilde{b}, \Delta, y, z)$ is feasible for Problem~\eqref{Prob: MIP_onedim_optim}.
\end{proof}
We note that Remark~\ref{remark:unique_binarification} applies for Problem~\eqref{Prob: MIP_onedim_optim} as well.

\end{document}